\theoremstyle{plain}
\newtheorem{theorem}{Theorem}[section]
\newtheorem{lemma}[theorem]{Lemma}
\newtheorem{proposition}[theorem]{Proposition}
\newtheorem{result}[theorem]{Observations}
\theoremstyle{definition}
\newtheorem{remark}[theorem]{Remark}
\newcommand{\R}{\mathbb{R}}
\newcommand{\N}{\mathbb{N}}
\newcommand{\supp}{\mathop{\mathrm{supp}}}
\newcommand{\M}{\mathcal{M}}
\newcommand{\A}{\mathcal{A}}
\newcommand{\p}{\partial}
\newcommand{\les}{\lesssim}
\let\div\relax
\DeclareMathOperator{\div}{div}
\let\tilde\relax
\newcommand{\tilde}[1]{\widetilde{#1}}
\let\hat\relax
\newcommand{\hat}[1]{\widehat{#1}}
\newcommand{\BMO}{{\rm BMO}}
\numberwithin{equation}{section}
\numberwithin{figure}{section}
\setlist[enumerate]{leftmargin=*}
\title{Forward self-similar solutions to the 2D Navier--Stokes equations}
\author[Albritton]{Dallas Albritton} 
\address[Dallas Albritton]{University of Wisconsin-Madison, Department of Mathematics,  480 Lincoln Dr, Madison, WI 53706, USA}
\email{dalbritton@wisc.edu}
\author[Guillod]{Julien Guillod}
\address[Julien Guillod]{Sorbonne Université, CNRS, Université Paris Cité, Inria, Laboratoire Jacques-Louis Lions (LJLL), 75005 Paris, France}
\address{École Normale Supérieure, Université PSL, Département de Mathématiques et applications, 75005, Paris, France}
\email{julien.guillod@sorbonne-universite.fr}
\author[Korobkov]{Mikhail Korobkov}
\address[Mikhail Korobkov]{Fudan University, School of Mathematical Sciences, Handan Road 220, 200433 Shanghai, China}
\email{korob@math.nsc.ru}
\author[Ren]{Xiao Ren}
\address[Xiao Ren]{Fudan University, Center for Applied Mathematics, Handan Road 220, 200433 Shanghai, China}
\email{xren@fudan.edu.cn}
\date\today
\begin{document}
\begin{abstract}
We construct self-similar solutions to the 2D Navier--Stokes equations evolving from arbitrarily large $-1$--homogeneous initial data and present numerical evidence for their non-uniqueness.
\end{abstract}

\maketitle

\setcounter{tocdepth}{1}
\tableofcontents

\parskip   2pt plus 0.5pt minus 0.5pt

\section{Introduction}

We consider the incompressible Navier--Stokes equations
\begin{equation}
\label{eq:NS}
\tag{NS}
\left\{
    \begin{aligned}
        &\p_t u + u \cdot \nabla u + \nabla p = \Delta u \\
        &\div u = 0
    \end{aligned}\right. 
\end{equation}
in the whole space $\R^d$, $d=2,3$. Recently, there has been significant progress in our understanding of uniqueness for solutions to the Cauchy problem for~\eqref{eq:NS} in 3D: We now know that Leray's weak solutions~\cite{leray1934} are not unique. 
This was anticipated by Ladyzhenskaya in~\cite{ladyzhenskayanonuniqueness}. Two concrete non-uniqueness scenarios were proposed by Jia and \v{S}ver{\'a}k in~\cite{jiasverakillposed}. Numerical evidence was subsequently presented by the second author and \v{S}ver{\'a}k in~\cite{guillodsverak}. A rigorous proof was obtained with non-zero external force by the first author, Bru{\`e}, and Colombo in~\cite{albritton2021non}. Finally, a computer-assisted proof was announced by T.~Hou, Y.~Wang, and C.~Yang in~\cite{hou2025nonuniqueness}.

Key to this progress has been the investigation of \emph{self-similar solutions}. The Navier--Stokes equations are invariant under the scaling transformation
\begin{equation}\label{eq:scaling}
    u_\ell(x,t) = \frac{1}{\ell} u \left( \frac{x}{\ell} , \frac{t}{\ell^2} \right) \, , \quad p_\ell(x,t) = \frac{1}{\ell^2} p\left( \frac{x}{\ell} , \frac{t}{\ell^2} \right) \, ,  \quad \ell > 0 \, ,
\end{equation}
corresponding to the dimension counting~\cite[(1.9)]{ckn}
\begin{equation}
    [x] = L \, , \quad [t] = L^2 \, , \quad [u] = L^{-1} \, ,  \quad [p] = L^{-2} \, ,
\end{equation}
obtained by identifying length${}^2$ and time using the viscosity $\nu$, which has dimensions $[\nu] = L^2/T$. A solution $(u,p)$ to~\eqref{eq:NS} in $\R_+ \times \R^d$, $d=2,3$, is a (forward) \emph{self-similar solution} if it is invariant under the scaling symmetry~\eqref{eq:scaling}. Such solutions are necessarily of the form
\begin{equation} \label{eq:self-similar}
    u(x,t) = \frac{1}{\sqrt{t}} U \left( \frac{x}{\sqrt{t}} \right) \, , \quad p(x,t) = \frac{1}{t} P \left( \frac{x}{\sqrt{t}} \right) \, .
\end{equation}
In particular, $(u,p)$ is entirely determined by its profile $(U,P)$ at time $t=1$. Plugging~\eqref{eq:self-similar} into the Navier--Stokes equations, we obtain the \emph{Leray equations}
\begin{equation}
    \label{eq:LerayEquations}
    \left\{
\begin{aligned}
	&- \Delta U - \frac{1}{2} (U + y \cdot \nabla_y U) + U \cdot \nabla U + \nabla P = 0 \\
    &\div U = 0
\end{aligned}\right.
\end{equation}
where $y = x/\sqrt{t}$. 
Self-similar solutions evolve from $-1$--homogeneous initial data $u_0$; the initial condition $u \to u_0$ as $t \to 0^+$ is encoded in the boundary condition
\begin{equation}
    \label{eq:LerayEquationsBC}
    U(y) = u_0(y) + o \left( \frac{1}{|y|} \right) \text{ as } |y| \to +\infty \, .
\end{equation}

Non-trivial $-1$--homogeneous velocity fields do not belong to the critical space $L^d$, where local-in-time well-posedness holds~\cite{kato}; rather, they belong to the weak Lebesgue space $L^{d,\infty}$, where only small-data global well-posedness results are known.\footnote{Informally, critical spaces are those for which the norm is invariant under the scaling in~\eqref{eq:scaling}. The space $L^{d,\infty}$ has the additional feature that \emph{$C^\infty_0$ functions are not dense}, which translates to small-data global well-posedness, not local well-posedness for arbitrary data, under the arguments in, e.g.,~\cite{kato,KochTataruWellposednessNavierStokes} or ~\cite[Chapter 5]{TsaiBook}. Such spaces were loosely termed \emph{ultra-critical} in~\cite{BedrossianGermainHarropGriffithsFilaments}.} Therefore, we regard $-1$--homogeneous velocity fields as on the borderline of the well-posedness theory. We examine this borderline through the lens of self-similar solutions. At a glance, this approach seems more tractable, since~\eqref{eq:LerayEquations} is elliptic and bears some resemblance to the steady Navier--Stokes equations, for which non-uniqueness of solutions to the boundary-value problem is common, even expected, see, e.g., \cite{Yudovich1966,GaldiBook,KorobBook}.

Perhaps the simplest non-uniqueness scenario is that multiple self-similar solutions evolve from the same self-similar initial data $u_0$. In~\cite{jiasverakillposed}, Jia and \v{S}ver{\'a}k speculated that this might arise due to a bifurcation in the set of solutions to~\eqref{eq:LerayEquations}-\eqref{eq:LerayEquationsBC} as the size of $u_0$ is increased. Subsequently, such a bifurcation was reported numerically by the second author and \v{S}ver{\'a}k in~\cite{guillodsverak}.

The uniqueness question has been largely ignored in dimension $d=2$, since Leray-Hopf solutions are unique in 2D. However, in the infinite-energy class, there is nothing to prevent the above scenario from occurring, as we illustrate below.

\subsection{Main results}

In this paper, we consider self-similar solutions to~\eqref{eq:NS} in dimension $d=2$. We begin by investigating the following question:
\begin{quote}
\textbf{(Q1)} Given a $-1$--homogeneous initial velocity field, does there exist a self-similar solution to the Navier--Stokes equations?
\end{quote}

Our main theorem answers this question in the affirmative:

\begin{theorem} \label{thm:main}
Let $\alpha \in (0,1)$ and  $u_0 \in C^\alpha(\R^2 \setminus \{0\})$ be a $-1$--homogeneous divergence-free vector field. Then there exists a self-similar solution $u$ to the 2D Navier--Stokes equations with initial data $u_0$. Moreover, its profile $U$ is smooth and satisfies 
	\begin{equation}
		|U(y) - (e^\Delta u_0) (y)| \le C(\alpha, \M) \langle y \rangle^{-1-\alpha} \, ,
	\end{equation}
where $\M := \|u_0\|_{C^\alpha(S_1)}$. 
\end{theorem}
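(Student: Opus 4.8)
The plan is to construct the self-similar profile $U$ as a perturbation of the heat flow initial layer $e^\Delta u_0$, using the linearized Leray operator around the trivial solution plus a fixed-point argument in a weighted H\"older space. First I would decompose $U = e^\Delta u_0 + V$, where $V$ is the correction that decays like $\langle y\rangle^{-1-\alpha}$; the point is that $e^\Delta u_0$ already solves the linear part of the Leray equations with the correct homogeneous data at infinity (the heat semigroup at time $1$ acting on a $-1$-homogeneous field produces exactly the self-similar profile of the linear equation), and moreover $e^\Delta u_0 - u_0 = O(\langle y\rangle^{-3})$ away from the origin, so~\eqref{eq:LerayEquationsBC} is automatically satisfied once $V = o(1/|y|)$. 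Substituting this ansatz into~\eqref{eq:LerayEquations} produces an equation of the schematic form $\mathcal{L} V = -\,\mathbb{P}\operatorname{div}(e^\Delta u_0 \otimes e^\Delta u_0) - \mathbb{P}\operatorname{div}(V\otimes e^\Delta u_0 + e^\Delta u_0 \otimes V + V\otimes V)$, where $\mathcal{L}V = -\Delta V - \tfrac12(V + y\cdot\nabla V) + \mathbb{P}(\ldots)$ is the Oseen-type linearized operator and $\mathbb{P}$ is the Leray projection. The inhomogeneous source term $\mathbb{P}\operatorname{div}(e^\Delta u_0\otimes e^\Delta u_0)$ is smooth and, since $e^\Delta u_0$ is bounded and $-1$-homogeneous to leading order, it decays like $\langle y\rangle^{-2}$, which is precisely the decay needed to get $\langle y\rangle^{-1-\alpha}$ on $V$ after applying the solution operator of $\mathcal{L}$.

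Next I would establish the mapping properties of $\mathcal{L}^{-1}$. The operator $-\Delta - \tfrac12(1 + y\cdot\nabla)$ is, after the substitution that trades the self-similar variable for a time variable, the generator of the rescaled heat semigroup; its resolvent/solution operator gains two derivatives and maps $\langle y\rangle^{-2-k}C^\alpha$ into $\langle y\rangle^{-k}C^{2,\alpha}$ for the relevant range of $k$, because the self-similar scaling makes the drift term $-\tfrac12 y\cdot\nabla$ confining and prevents the loss of decay one would otherwise see for the bare Laplacian in 2D. The Leray projection $\mathbb{P}$ must be handled carefully since it is nonlocal and $\mathbb{R}^2$ is unbounded, but on the weighted spaces in question (decay strictly faster than $|y|^{-1}$, so in particular the relevant vector fields are in $L^2 + L^\infty_{\text{decay}}$) it is bounded, using the explicit kernel of the Biot–Savart / Leray operator together with standard Calder\'on–Zygmund and Schauder estimates; the vorticity formulation, in which $\mathbb{P}\operatorname{div}$ becomes the curl and the scalar equation for $\omega = \operatorname{curl} U$ is easier to analyze, is a convenient device here. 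With these estimates in hand, the map $V \mapsto \mathcal{L}^{-1}(\text{source} + \text{quadratic in }V)$ is a contraction on a small ball of the weighted H\"older space $\langle y\rangle^{-1-\alpha}C^{2,\alpha}$ once we know the source is small — but the source is \emph{not} small, since $u_0$ is arbitrarily large, so the naive contraction fails and this is exactly where the real work lies.

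\textbf{The main obstacle.} The crux is handling arbitrarily large data: the quadratic nonlinearity $V\otimes V$ and the cross term $V\otimes e^\Delta u_0$ prevent a direct Banach fixed point when $\mathcal{M}$ is large. I would resolve this by an a priori estimate + degree-theory (Leray–Schauder) argument rather than contraction: one introduces the family of problems with nonlinearity $\lambda\,\mathbb{P}\operatorname{div}(U\otimes U)$, $\lambda\in[0,1]$, shows the solution set is relatively compact in the weighted space (using that $\mathcal{L}^{-1}$ is compact on these spaces, by the extra derivative gain plus the weight), and—critically—derives a uniform-in-$\lambda$ a priori bound on $U$. This a priori bound is where 2D enters decisively and where the argument is genuinely two-dimensional: one tests the vorticity equation, exploits that in 2D the nonlinear term $U\cdot\nabla\omega$ has a favorable structure (the enstrophy/energy-type identities for the Leray system, together with the confining drift $-\tfrac12 y\cdot\nabla\omega$ which contributes a good sign after integration by parts, namely $\int \tfrac12 (y\cdot\nabla\omega)\omega = -\tfrac12\int \omega^2$-type terms with the right sign), and a Hardy-type inequality adapted to the weight to close the estimate without smallness. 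Once the a priori bound is established, Leray–Schauder degree theory yields existence at $\lambda = 1$, and elliptic regularity bootstraps $U$ to $C^\infty$; the decay estimate $|U - e^\Delta u_0| \le C(\alpha,\mathcal{M})\langle y\rangle^{-1-\alpha}$ then follows from the mapping property of $\mathcal{L}^{-1}$ applied once more to the equation for $V$, now that $V$ is known to exist and lie in the energy class, with the constant depending on $\mathcal{M}$ through the a priori bound.
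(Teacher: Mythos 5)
Your overall framework is correct and matches the paper: decompose $U = e^\Delta u_0 + V$, set up a fixed-point equation for $V$ in a weighted space, prove compactness of the solution operator, and then close the existence argument via Leray--Schauder with a uniform-in-$\lambda$ a priori bound. The compactness argument you sketch (decay gain from the kernel of $\mathcal{L}^{-1}$ plus local elliptic regularity) is also essentially what the paper does in Section~\ref{sec:K}.

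However, the a priori estimate paragraph---which you correctly identify as ``where the real work lies''---contains a genuine gap. Testing the vorticity equation with $\Omega$ only produces the tautological identity $\int|\nabla\Omega|^2 = \tfrac12\int\Omega^2$, and the transport term $U\cdot\nabla\Omega$ drops out by incompressibility; this controls $\nabla\Omega$ by $\nabla U$ but gives no information on $U$ itself. You then invoke ``a Hardy-type inequality adapted to the weight'' to recover $U$, but this is precisely what fails in 2D: there is no inequality $\int |U|^2/|y|^2 \lesssim \int|\nabla U|^2$ on $\R^2$ (the log divergence), and indeed the paper emphasizes that $\dot H^1(\R^2)$ controls neither $\|U\|_{L^2(B_1)}$ nor the far-field behavior---functions in $\dot H^1(\R^2)$ can grow logarithmically. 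The paper closes the estimate through three steps you do not identify: (i) a pointwise bound on $U$ near the origin obtained from the \emph{self-similar Bernoulli pressure} $\Phi = \tfrac\lambda2|U|^2 - \tfrac{y}{2}\cdot U + P$, which satisfies a maximum principle, combined with pressure bounds derived from compensated compactness (div-curl lemma in the Hardy space $\mathcal{H}^1$); (ii) the observation that $\partial_r(rU) = U + y\cdot\nabla U$ is controlled in a weighted $L^{2-\gamma_1}$ norm via the equation, which converts the nearly-$L^2$ bound on $\partial_r(rU)$ into decay of $U$ along rays; and (iii) a final parabolic bootstrap transferring the problem to the time-dependent equations on an annulus, applying the Jia--\v{S}ver\'ak local-in-space smoothing to upgrade the weighted integral bound to the pointwise $\langle y\rangle^{-1-\alpha}$ estimate. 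Without (i) you cannot even bound $|U|$ on a single circle; without (ii)--(iii) the weighted estimate does not reach the decay rate claimed in the theorem. As a smaller point, $e^\Delta u_0 - u_0 = O(\langle y\rangle^{-3})$ is too strong for general $C^\alpha$ data; the correct rate, proved in Lemma~\ref{lem:a}, is $O(\langle y\rangle^{-1-\alpha})$, which is exactly what the target estimate demands.
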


The analogous theorem in 3D was obtained by Jia and \v{S}ver{\'a}k in~\cite{jiasverakselfsim}. However, the 2D problem carries significant new difficulties, foremost among which is that the initial data $u_0 \sim \frac{1}{|x|}$ has infinite energy, even locally; hence, the problem lacks any clear \emph{a priori} estimate, unlike the 3D case. Indeed, we derive \emph{a priori} estimates that hold specifically for self-similar solutions and not for general solutions to the evolutionary problem.

Once the existence of self-similar solutions is established, the natural question is the following:
\begin{quote}
\textbf{(Q2)} Are the self-similar solutions to the Navier--Stokes equations unique?
\end{quote}
Our numerical results indicate that the answer is negative:
\begin{result}\label{numerics}
	For the $-1$--homogeneous initial velocity field,
	\begin{equation}
		u_{0}(y) = -\frac{\sigma y_{1}y}{|y|^{3}},
	\end{equation}
	we numerically observe the existence of three different self-similar solutions to the 2D Navier--Stokes equations with initial data $u_0$ for $39.2\lesssim\sigma\leq80$.
\end{result}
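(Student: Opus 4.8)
The statement is a numerical observation rather than a theorem, so the ``proof'' is a description of the computational strategy that produces and certifies the three solution branches. The plan is to reduce the Leray system \eqref{eq:LerayEquations}--\eqref{eq:LerayEquationsBC} to a scalar elliptic equation on a compactified domain, exploit the symmetries of the prescribed $u_0$, and then trace the solution set by Newton iteration combined with pseudo-arclength continuation in the amplitude parameter $\sigma$.

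First I would pass to the vorticity--stream function formulation: writing $U = \nabla^\perp\Psi$ and $\Omega = \curl U = \Delta\Psi$, the Leray equations become the single scalar equation
\begin{equation*}
  -\Delta\Omega - \Omega - \tfrac12\, y\cdot\nabla\Omega + \nabla^\perp\Psi\cdot\nabla\Omega = 0, \qquad \Delta\Psi = \Omega,
\end{equation*}
with the far-field condition $\Omega \to \curl u_0 = -\sigma y_2/|y|^2$ encoded through \eqref{eq:LerayEquationsBC}. Next I would use symmetry: $\curl u_0$ is even in $y_1$ and odd in $y_2$, a class preserved by the equation, so on the ``primary'' branch one may expand $\Omega(y) = \sum_{n\ \mathrm{odd}} \omega_n(r)\sin(n\theta)$ in polar coordinates $(r,\theta)$. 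After truncating the angular series at some mode number $N$ and compactifying the radial half-line by a change of variable such as $s = r/(1+r)$ or $s = \log r$, I would discretize in $s$ by finite differences or Chebyshev collocation. Crucially, rather than imposing a Dirichlet truncation at finite radius, I would solve for the faster-decaying remainder $V = U - e^\Delta u_0$ (equivalently for $\Omega - \curl(e^\Delta u_0)$), as in the proof of Theorem \ref{thm:main}, since $e^\Delta u_0$ is computable to high accuracy and $V = O(|y|^{-1-\alpha})$ is genuinely localized. This yields a finite nonlinear algebraic system $F(\Omega;\sigma) = 0$ with an explicitly assembled Jacobian.

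With this in hand, I would start from $\sigma = 0$, where $\Omega\equiv 0$ is the unique solution, and for small $\sigma$ Newton's method converges to the solution that is linear in $\sigma$. Parameter continuation in $\sigma$ then tracks this primary branch; Theorem \ref{thm:main} guarantees it persists all the way to $\sigma = 80$. Simultaneously I would monitor the smallest singular value of $D_\Omega F$: it should pass near zero around $\sigma_* \approx 39.2$, signalling a bifurcation (a symmetry-breaking pitchfork producing a pair of mirror-image branches, or a saddle-node producing a new pair — in either case the count at $\sigma > \sigma_*$ is three). Switching to pseudo-arclength continuation to negotiate the near-singular point, and using an eigenvector perturbation or a deflation step to jump onto the new branch, I would continue each of the three profiles to $\sigma = 80$. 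I would then certify distinctness quantitatively (by the values of $\|\Omega\|_{L^2}$, or of $\Psi$ at the origin, say) and qualitatively (streamline topology), and validate the whole computation by refining $N$, the number of radial nodes, and the accuracy of the far-field treatment, by checking the pointwise residual of the Leray equations, and by confirming that the computed profiles respect the a priori bound $|U - e^\Delta u_0|\le C(\alpha,\M)\langle y\rangle^{-1-\alpha}$ of Theorem \ref{thm:main}.

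The main obstacle is twofold. First, the $-1$--homogeneity of $u_0$ makes $U$ decay only like $1/|y|$, so a naive domain truncation introduces an $O(1/R)$ error that would contaminate the location of $\sigma_*$; subtracting $e^\Delta u_0$ and resolving only the $O(|y|^{-1-\alpha})$ remainder is what makes the threshold $\sigma_* \approx 39.2$ meaningful. Second, near the bifurcation the Jacobian is (nearly) singular, so plain Newton iteration fails there and robust branch switching is delicate; moreover, for the larger values $\sigma \lesssim 80$ the profiles are expected to develop increasingly sharp internal shear layers, which demand local mesh refinement for the residual to remain small and for the three branches to be cleanly separated.
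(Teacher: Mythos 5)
Your overall scheme---continuation in $\sigma$ with Newton's method, monitoring the linearization for an eigenvalue crossing near $\sigma_*\approx 39.2$, then jumping onto the bifurcating branch via an eigenvector perturbation---is the same as the paper's, though the implementation differs: the paper works in primitive variables $(\tilde U,\tilde P)$ with P2/P1 finite elements on the ball $B_{R_\sigma}$, $R_\sigma=100\sqrt{1+\sigma/80}$, rescaled to the unit disk with the exact Dirichlet data $\tilde U(\tilde y)=\tilde y_1\tilde y$ on the boundary; no subtraction of $e^\Delta u_0$, no pseudo-arclength, and no compactification are needed (plain continuation with steps $0.02$ and $0.2$ suffices, and the spectrum is computed with a Krylov--Schur solver). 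Your vorticity--stream-function formulation with a compactified radial variable and the $e^\Delta u_0$ subtraction is a legitimate alternative far-field treatment.

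However, your symmetry reduction contains a genuine error that would corrupt both the primary branch and the detection of the bifurcation. The vertical reflection $(y_1,y_2)\mapsto(-y_1,y_2)$ sends $u_0$ to $-u_0$, so it is a symmetry of the Stokes problem at $\sigma=0$ but not of the nonlinear Leray equations; equivalently, the class ``$\Omega$ even in $y_1$ and odd in $y_2$'' (your odd-$n$ sine modes) is not invariant under the nonlinearity, since quadratic interactions of odd angular wavenumbers generate even ones. The only usable symmetry is the reflection $\mathcal R$ about the horizontal axis ($\Omega$ odd in $y_2$, i.e., a full sine series in $\theta$), and indeed the paper's symmetric solutions are only $\mathcal R$-symmetric for $\sigma>0$, the extra vertical symmetry holding solely at $\sigma=0$. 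Newton iteration on a system truncated to odd $n$ would thus converge (if at all) to something that does not solve \eqref{eq:LerayEquations}, and the resulting value of $\sigma_*$ would be meaningless. Relatedly, the bifurcation at $\sigma_0\approx 39.2$ breaks the $\mathcal R$-symmetry: the critical eigenvector is $\mathcal R$-antisymmetric (cosine modes in the vorticity), so monitoring the smallest singular value of the Jacobian of a problem posed in the symmetric subspace would miss the crossing entirely; the linearization must be computed in the full space, as the paper does. Two smaller slips: $\curl u_0=-\sigma y_2/|y|^3$, not $-\sigma y_2/|y|^2$, and Theorem~\ref{thm:main} guarantees existence of some solution for every $\sigma$, not the persistence of a particular branch.
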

The analogous result was obtained in 3D by the second author and \v{S}ver{\'a}k in~\cite{guillodsverak}. The numerical strategy in 2D is very similar to the 3D one. While continuing the solution in $\sigma$, we compute the eigenvalues of the linearization around the solution, and by standard bifurcation argument, if an eigenvalue becomes zero, then a new solution emerges. In both 2D and 3D, the scenario is a pitchfork bifurcation through the breaking of a discrete symmetry, as shown on Figure~\ref{fig:Uintro}. We refer the reader to Section~\ref{sec:numericalobservations} for further discussion.

\begin{figure}[h]
	\includegraphics{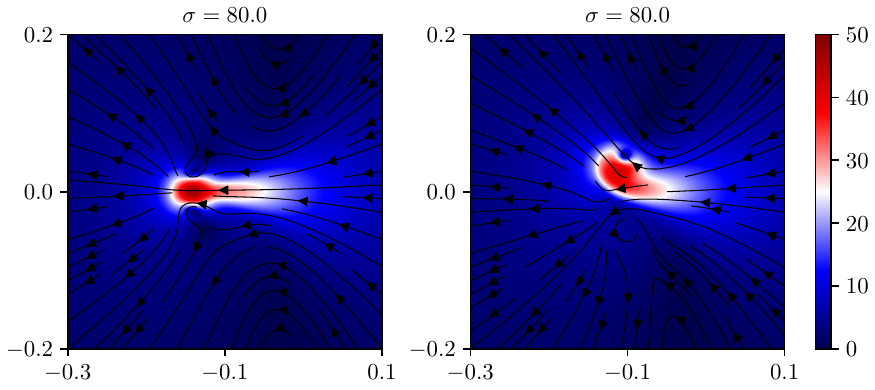}
	\caption{Symmetric and non-symmetric solutions with $\sigma = 80$.}
    \label{fig:Uintro}
\end{figure}

Finally, while computations have played an important role in the non-uniqueness theory, both at the level of numerical evidence and computer-assisted proof~\cite{hou2025nonuniqueness}, it remains interesting to develop such a theory by hand. The investigation of large self-similar solutions in Theorem~\ref{thm:main} may be viewed as a step in this direction.

\subsection{Strategy} To construct arbitrarily large solutions $U$, the two main ingredients will be \emph{a priori} estimates and compactness, the key assumptions to apply a version of the Leray-Schauder fixed point theory.

We begin by highlighting notable differences between the 2D and 3D theory, starting with the available \emph{a priori} estimates:
\begin{enumerate}[label=(\roman*)]
\item The kinetic energy $\frac{1}{2} \int |u|^2 \, dx$ is scaling-critical in 2D but supercritical in 3D.
\item In 2D, the vorticity $\omega := \nabla^\perp \cdot u$ satisfies the advection-diffusion equation
\begin{equation}
    \label{eq:vorticityequation}
    \p_t \omega + u \cdot \nabla \omega = \Delta \omega \, .
\end{equation}
Hence, every $L^p$ norm $\| \omega \|_{L^p}$, $1 \leq p \leq +\infty$, is monotonically decreasing under the evolution in the absence of boundary.\footnote{More generally, $\int f(\omega) \, dx$ will be decreasing for convex functions $f$.}
\end{enumerate}
The 2D Navier--Stokes equations are commonly viewed as better behaved, since the \emph{a priori} estimates for the time-evolution problem are better from a regularity perspective. These expectations are subverted for $-1$--homogeneous initial data:\footnote{The similar ``subversion" holds, e.g., for the stationary flow-around-obstacle problem, which was basically solved by J.~Leray himself for 3D case, but is still open for 2D, see, e.g., \cite{korob-ren23}.}

First, non-zero $-1$--homogeneous initial data $u_0$ always has infinite energy. A key point in 3D constructions, beginning with~\cite{jiasverakselfsim}, is that $u_0$ has \emph{locally finite energy}, which furnishes \emph{a priori} estimates for the evolution problem~\cite{LemarieRieussetRecentDevelopments}. However, in 2D, $u_0$ has locally infinite energy.

Second, for non-zero $-1$--homogeneous initial data, the vorticity $\omega_0$, as a distribution, is $-2$--homogeneous and therefore never represented by a locally integrable function. A notable borderline case is when the initial vorticity $\omega_0 = \alpha \delta_{x=0}$ is a multiple of a Dirac mass; the corresponding solution is the \emph{Lamb-Oseen vortex}
\begin{equation}
    \label{eq:lamboseen}
    \omega(x,t) = \frac{\alpha}{4 \pi t} e^{-\frac{|x|^2}{4t}} \, , 
\end{equation}
namely, $\alpha$ times the 2D heat kernel.\footnote{Recall that the non-linearity vanishes on radial solutions to~\eqref{eq:vorticityequation}.} The space $\mathcal{M}(\R^2)$ of finite Radon measures is a critical space in 2D. Remarkably,~\eqref{eq:lamboseen} is \emph{unique} in the class $C((0,T];L^1) \cap C_w([0,T];\mathcal{M})$, even among non-radial solutions, and globally attracts solutions with circulation $\alpha$~\cite{GallayWayne} (see also~\cite{GallagherGallay,GallayGallagherLions}). Until now, this was the only known large self-similar solution in 2D. Other vorticities may arise as principal value integrals
\begin{equation}
    \omega_0 = {\rm pv} \, \frac{1}{|x|^2} b_0 \left( \frac{x}{|x|} \right) \, ,
\end{equation}
where $b_0 : S^1 \to \R$ is a mean-zero function on the sphere, or as linear combinations with the Dirac delta. The conclusion is that the standard vorticity estimates are not generally applicable to self-similar solutions. \\

Our proof of Theorem~\ref{thm:main} is based on \emph{a priori estimates specific to self-similar solutions}, especially
\begin{equation}
    \label{eq:aprioriestimateinintro}
\| \langle y \rangle U \|_{L^\infty} \leq C(\alpha,\mathcal{M}) \, .    
\end{equation}

We begin with the basic energy estimate for~\eqref{eq:LerayEquations}:
\begin{equation} \label{eq:Dirichletenergyinintro}
	\int_{\R^2} |\nabla U|^2 \, dy \leq \frac{1}{4} \limsup_{R \to +\infty} R \int_{S_R} |U(y)|^2 \, ds \lesssim \| u_0|_{S_1} \|_{L^2(S_1)} \, 
\end{equation}
(see Lemma~\ref{b-en-global}). 
Then, based on the equation for the vorticity profile~$\Omega=\nabla^\perp \cdot U$, we establish
\begin{equation}
 \label{vet-1}
			\int_{\R^2} |\nabla\Omega|^2 dx=\int_{\R^2} |\Delta U|^2 dx \lesssim \M^2
		\end{equation}
(see Lemma~\ref{vort-est}). Proceeding further, we face the fundamental difficulty that in 2D the Dirichlet integral  does not control the  magnitude of the velocity itself (for example, $\|U\|_{L^2(B_1)}$), unlike in 3D where $\dot H^1(\R^3) \hookrightarrow L^6(\R^3)$. Moreover, functions in $\dot H^1(\R^2)$ can grow logarithmically as $|y| \to +\infty$ (consider, for example, $(\ln (2+|y|))^\frac13$), which is far from the expected $O(\frac{1}{|y|})$ decay in \eqref{eq:aprioriestimateinintro}. This low-frequency problem is typical for the steady Navier--Stokes equations in 2D \cite{GaldiBook} and has finally been overcome rather recently in various settings, see \cite{GKR23,korob-ren25} and references therein. However, the delicate arguments of \cite{GKR23,korob-ren25} are not  applicable to the steady-type system~(\ref{eq:LerayEquations}) because of the large additional term~$y \cdot \nabla_y U$, i.e.,  some new ideas are required, tailored to the  specific structure of \eqref{eq:LerayEquations}. 

Fortunately, in 2D we have a kind of ``magic wand" to control the pressure pointwisely, namely, the Poisson equation
\begin{equation}
    - \Delta P = \nabla U : (\nabla U)^T \, ,
\end{equation}
the same as for the steady Navier--Stokes system. By compensated compactness (div-curl lemma),
 the right-hand side  is estimated in the Hardy space $\mathcal{H}^1$, and, consequently, $\|\nabla P\|_{L^2}+\|P\|_{L^\infty}$ is bounded by $\M^2$ as well (see Lemma~\ref{lem:pressurebounds}).
 
 But before starting the machinery, we also need an estimate on $U$ in a neighbourhood of the origin. For the above reasons, this problem is not trivial  and cannot be resolved by standard integral imbedding inequalities; a new tool is required.  In the steady Navier--Stokes theory, the Bernoulli pressure $\phi = \frac{|u|^2}{2} + p$ satisfies $(- \Delta + u \cdot \nabla)\phi = - |\omega|^2$; in particular, it is a~subsolution. The analogous quantity for the Leray equations~\eqref{eq:LerayEquations} is
\begin{equation}
\label{eq:Bernoullipressureforselfsim}
    \Phi := \frac{1}{2} |U|^2 - \frac{y}{2} \cdot  U + P \, ,
\end{equation}
which satisfies
\begin{equation}
    (- \Delta + \frac{1}{2}U - \frac{y}{2} \cdot \nabla) \Phi = - |\Omega|^2.
\end{equation}
By the maximum principle, we have
\begin{equation}
    \Phi(y) \leq \limsup_{R \to +\infty}  \sup_{S_R} \Phi \leq C(\| u_0|_{S_1} \|_{L^\infty}) \, , \quad \forall y \in \R^2 \, .
\end{equation}
Note that $\Phi$ controls $|U|^2$ up to the (bounded) pressure and the unsigned term $\frac{y}{2}\cdot  U$. From this, one can argue that there exists a good circle $S_{R^*}$ on which $|U|$ satisfies $|U| \leq C(\| u_0|_{S^1} \|_{L^\infty})$ (see Lemma~\ref{lem:local}). Notably, this is the first time the ``self-similar Bernoulli pressure" $\Phi$ has been used in the literature on forward self-similar solutions. A related quantity was exploited by~\cite{NecasRuzickaSverak,TsaiBackwardSelfSim} to exclude non-trivial \emph{backward} self-similar solutions. We also mention \cite{JaySelfsimHighdim} which uses $\Phi$ to exclude self-similar steady Navier--Stokes solutions in high dimensions.

Now,  the large additional term $y \cdot \nabla_y U$ becomes very helpful:
it allows us to make the key observation that $\partial_r(rU)$ is almost in $L^2(\R^2)$, up to a logarithmic term from 
$(U\cdot\nabla)U$. Through some technical steps this leads to the weighted integral estimate
	\begin{align} \label{eq:key-w2-UP}
		\left\||y|^{-\gamma_2} (|U|+|P|)\right\|_{L^{2-\gamma_1}(\R^2)} 
		&\le C(\M)
	\end{align} 
with $0< \gamma_1 \ll \gamma_2 \ll 1$, 
 while remaining in the ``steady setting" of system~(\ref{eq:LerayEquations}) 
(see Proposition~\ref{prop:elliptic}). The underlying reason for the decay of $U$ is that \emph{the scaling operator $\frac{1}{2}(1+y\cdot \nabla_y)$ effectively dominates at spatial infinity}, even under the modest starting estimate \eqref{eq:Dirichletenergyinintro}.

In the final stage, we pass back to the time-dependent equations to further bootstrap the solution via parabolic regularity theory. This is done in an annular region $\mathcal{A} \times [0,t_0]$ away from $x=0$, since the behavior of $U$ as $|y| \to +\infty$ corresponds to the behavior of $u(x,t)$ as $t \to 0^+$ at fixed radius $|x|$. At this stage, not only is it necessary to prove the pointwise estimate~\eqref{eq:aprioriestimateinintro}, but also one must obtain some compactness by estimating the rate of convergence to the far-field initial data, $|U - u_0| \lesssim \langle y \rangle^{-1-\alpha}$, which corresponds to H{\"o}lder continuity for the time-dependent solution $u$ in the annular region $\mathcal{A} \times [0,t_0]$. 
For this, we apply the very efficient and elegant `local-in-space smoothing' approach developed in~\cite{jiasverakselfsim}; see Section~\ref{sec:bootstrap}.

\subsection{Comparison with existing literature}

\subsubsection{3D self-similar solutions} To begin, we review the strategies for constructing self-similar solutions in 3D.

Small self-similar solutions can be constructed via the perturbation theory~\cite{CannoneMeyerPlanchonAutosimilaires,CannonePlanchonSelfsimilarSolutionsCPDE,GigaMiyakawaMeasuresInitialVorticity,BarrazaSelfsimilarWeakLp,KochTataruWellposednessNavierStokes} in essentially the same way in dimensions $d \geq 2$. The spatial asymptotics of such solutions were investigated in~\cite{BrandoleseFineProperties}.

Large solutions were constructed by Jia and \v{S}ver{\'a}k in~\cite{jiasverakselfsim} from $-1$--homogeneous $C^\alpha(\R^3 \setminus \{ 0 \})$ initial velocity fields $u_0$. They obtain \emph{a priori} estimates in the following way: First, $u$ is controlled in $L^2_{\rm uloc}$ via uniformly local energy estimates~\cite{LemarieRieussetFirstL2uloc,LemarieRieussetRecentDevelopments,SereginKikuchi,KwonTsaiL2uloc}. Second, the solution is estimated in a H{\"o}lder class in an annular region $\mathcal{A} \times [0,t_0]$, with bounds depending only on norms of $u_0$, via a new local-in-space smoothing estimate. This second step controls the tails of the self-similar profile $U$, as mentioned above (see also Section~\ref{sec:bootstrap}). The smoothing estimate is of independent interest and has been refined in~\cite{BarkerPrangeLocalizedSmoothingConcentration,KangMiuraTsaiLocallyL3,AlbrittonBarkerPrangeLocalizedSmoothingHalfSpace}.

After~\cite{jiasverakselfsim}, many works gave alternative constructions accommodating a variety of extensions: discretely self-similar (DSS) solutions,\footnote{This means that there exists $\ell_0 > 1$ such that $u_{\ell_0} = u$, where $u_{\ell_0}$ is defined via the scaling symmetry~\eqref{eq:scaling}.} the half-space, low regularity, and combinations thereof. Notable works in this direction include~\cite{tsaidiscretely} (DSS with $\ell_0 - 1 \ll 1$),~\cite{korobkovtsai} ($\R^3_+$ via original Leray contradiction argument),~\cite{lemarie2016} ($L^\infty(\R^3 \setminus B_1)$),~\cite{bradshawtsaiII} ($L^{3,\infty}$),~\cite{bradshawtsairot} (rotational self-similarity),~\cite{bradshawtsaibesov,globalweakbesov} ($\dot B^{-1+3/p}_{p,\infty}$, $3 < p <+\infty$), and~\cite{ChaeWolfDiscretelySelfSimilar,BradshawTsaiL2Loc} ($L^2_{\rm loc}$). See also the general references~\cite{JiaSverakTsaiHandbook,BradshawTsaiSurveyRecentResults}.\footnote{For large self-similar solutions in other viscous fluid equations, see, for example,~\cite{AlbrittonBradshawSQG} (critical SQG),~\cite{LaiMHDSelfSim} (MHD), and~\cite{LaiMiaoZhengSelfsimFractional} (NS).} The main \emph{a priori} estimates on which the above works are predicated hold for solutions to the time-dependent problem with $u_0$ in critical function spaces but not necessarily self-similar. For example,~\cite{bradshawtsaiII} is based on a Calder{\'o}n-type splitting~\cite{CalderonWeakSolutionsLp} which results in solutions satisfying
\begin{equation}
    u = e^{t\Delta} u_0 + v \, ,
\end{equation}
where $v$ has finite energy
\begin{equation}
    \| v(\cdot,t) \|_{L^2}^2 + \int_0^t \| \nabla v(s) \|_{L^2}^2 \, ds \leq C(\| u_0 \|_{L^{3,\infty}}) t^{1/2} \, , \quad \forall t > 0 \, .
\end{equation}
Such solutions were investigated in~\cite{BarkerSereginSverakWeakL3} for general $L^{3,\infty}$ velocity fields.

The direct analogues of the above estimates all fail in 2D. It is therefore interesting to speculate \emph{whether existence itself may fail for non-self-similar $u_0$ belonging to critical spaces like $L^{2,\infty}$.} Unlike what happens in $\R^{3}$, it is plausible that there
is no solution for some non-self-similar initial data in this space, which could even be rotationally self-similar~\cite{bradshawtsairot}. The self-similar Bernoulli pressure $\Phi$, which plays a crucial role in our proof, is quite specialized.

\subsubsection{More non-uniqueness} 

In~\cite{albritton2021non}, Bru{\`e}, Colombo, and the first author obtained non-uniqueness for Leray-Hopf solutions for the 3D Navier--Stokes equations with external force in $L^1_t L^{3-}_x$ with compact support in space. This was done by constructing an unstable self-similar solution in the form of an expanding vortex ring, supported by the force, within the axisymmetric-without-swirl class. In~\cite{albritton2021non}, non-uniqueness arises not from a bifurcation but rather from instability in self-similar variables, akin to~\cite[Proof of Theorem 4.1]{jiasverakillposed}. According to the first author, a variation of the construction in~\cite{albritton2021non} should work in 2D with forcing in $L^1_t L^{2-}_x$.

Recently, a computer-assisted proof was announced by T.~Hou, Y.~Wang, and C.~Yang in~\cite{hou2025nonuniqueness}, who numerically establish the existence and instability of a self-similar solution without a force.

We also mention certain contributions of convex integration to the Navier--Stokes uniqueness question, with no attempt to be exhaustive. In~\cite{BuckmasterVicolNavierStokesAnnals}, Buckmaster and Vicol prove non-uniqueness in the finite kinetic energy class. In~\cite{CheskidovLuoL2crit2023}, Cheskidov and Luo prove 2D non-uniqueness in $L^\infty_t L^p_x$ for every $p < 2$; in~\cite{CheskidovLuoSharp2022}, they demonstrate non-uniqueness in $L^2_t L^p_x$ for every $p < +\infty$ and dimension $d \geq 2$.

Finally, Coiculescu and Palasek recently prove non-uniqueness arising from large $\BMO^{-1}$ initial data in 3D~\cite{coiculescu2025non}. They speculate about the 2D case in~\cite[Remark 1.7]{coiculescu2025non}.

\subsection{Notations} We denote $B_R(y_0) = \{y \in \R^2 : |y-y_0|<R\}$ and $S_R(y_0) = \{y \in \R^2 : |y-y_0| = R\}$. For simplicity, we write $B_R, S_R$ for $B_R(0), S_R(0)$. 

Throughout the proof, $C(\M)$ will be a general constant depending only on $\M$, whose exact value can change from line to line. We write $A \lesssim B$ for $A \le C B$ with some absolute constant $C$, and write $A \lesssim_\varepsilon B$ for $A \le C(\varepsilon) B$. As usual, we set $\langle x \rangle = \sqrt{1+|x|^2}$.

We often express the $-1$-homogeneous initial data in terms of its value on the unit circle $a_0 = u_0|_{S_1}$, namely,
\begin{equation}
    u_0(x) = \frac{1}{|x|} a_0 \left(\frac{x}{|x|}\right).
\end{equation}

\section{Functional set-up}

We will work with the Banach space
\begin{equation}
	X_\alpha := \{ V \in L^\infty(\R^2) : \langle y \rangle^{1+\alpha} V \in L^\infty \, , \  \div V = 0 \},
\end{equation}
equipped with the norm
\begin{equation}
	\| V \|_{X_\alpha} := \| \langle y \rangle^{1+\alpha} V \|_{L^\infty(\R^2)} .
\end{equation}
Defining the linear operator
\begin{equation} \label{eq:Leray}
	 \mathcal{L} U :=  \Delta U + \frac{1}{2} (U + y \cdot \nabla_y U),
\end{equation}
the momentum equation of \eqref{eq:LerayEquations} can be written as
\begin{equation}
	-\mathcal{L} U + \mathbb{P} (U \cdot \nabla U) = 0 \, ,
\end{equation}
with $\mathbb{P} = \text{Id} - \nabla \Delta^{-1} \div$ being the usual Leray projector.  We set $ a = e^\Delta u_0$ and encode the leading order asymptotic of $U$ (or equivalently, the initial datum of $u$) into the decomposition
\begin{equation}
	U = a + V.
\end{equation}
For later usage, we collect some basic properties of $a$ here:

\begin{lemma} \label{lem:a}
	There holds $a \in C^\infty(\R^2)$, $\div a = 0$ and $\mathcal{L} a = 0$. Moreover, for any $\alpha \in (0,1)$, we have
	\begin{equation} \label{eq:a-u0}
		  \|\langle y \rangle^{1+\alpha}(a-u_0)(y) \|_{L^\infty(\R^2 \setminus B_1)} + \|\langle y \rangle a(y)\|_{L^\infty(\R^2)}  \lesssim \M,
	\end{equation}
	and
	\begin{equation} \label{eq:a-2}
		 \| \langle y \rangle^{1+\alpha} \nabla a(y)\|_{L^\infty(\R^2)} \lesssim \M.
	\end{equation}
\end{lemma}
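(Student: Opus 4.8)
The plan is to establish the four claims in turn, using only the definition $a = e^\Delta u_0$, the $-1$-homogeneity of $u_0$, and its $C^\alpha$ regularity away from the origin. \emph{Smoothness and divergence-free:} Since $u_0 \in C^\alpha(\R^2 \setminus \{0\})$ is $-1$-homogeneous, it is locally integrable near the origin in 2D and grows like $|x|^{-1}$ at infinity, so it defines a tempered distribution; convolution with the (Schwartz) heat kernel $G_1(x) = (4\pi)^{-1} e^{-|x|^2/4}$ at time $t=1$ then yields $a \in C^\infty(\R^2)$. The identity $\div a = e^\Delta(\div u_0) = 0$ follows since the heat semigroup commutes with derivatives. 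For $\mathcal{L} a = 0$: the key observation is that $e^\Delta$ applied to a $-1$-homogeneous function produces the time-$1$ slice of a self-similar caloric function, i.e.\ $b(x,t) := t^{-1/2}a(x/\sqrt{t})$ solves the heat equation $\p_t b = \Delta b$ with initial data $u_0$; differentiating the self-similar ansatz in $t$ and setting $t=1$ gives exactly $\Delta a + \tfrac12(a + y\cdot\nabla_y a) = 0$, which is $\mathcal{L} a = 0$. Alternatively one checks directly that $\mathcal{L}(e^\Delta f) = e^\Delta(\Delta f + \tfrac12(f + y\cdot\nabla f))$ when $f$ is $-1$-homogeneous, using that $y\cdot\nabla$ is the infinitesimal generator of dilations and that dilating $u_0$ just rescales it.

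\emph{The pointwise bounds \eqref{eq:a-u0} and \eqref{eq:a-2}:} Write $a(y) = \int_{\R^2} G_1(y-z)\, u_0(z)\, dz$. For the global bound $|a(y)| \lesssim \M \langle y\rangle^{-1}$, split the integral into the region $|z| < |y|/2$, the region $|z - y| < |y|/2$, and the far region. On $|z|<|y|/2$ one has $|y - z| \gtrsim |y|$ so the Gaussian is $\lesssim e^{-c|y|^2}$ and $\int_{|z|<|y|/2}|u_0(z)|\,dz \lesssim \M |y|$ by $-1$-homogeneity, giving super-polynomial decay; on $|z-y|<|y|/2$ one has $|u_0(z)| \lesssim \M |y|^{-1}$ and the Gaussian integrates to $O(1)$; the far region again gives Gaussian decay after a dyadic decomposition in $|z|$. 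For the difference $a - u_0$ on $\R^2 \setminus B_1$, use $\int G_1 = 1$ to write $(a - u_0)(y) = \int G_1(y-z)(u_0(z) - u_0(y))\, dz$ and exploit the $C^\alpha$ bound: on the ball $|z - y| < |y|/2$, $-1$-homogeneity plus $C^\alpha(S_1)$ gives $|u_0(z) - u_0(y)| \lesssim \M |y|^{-1-\alpha}|z-y|^\alpha$ (Hölder scaling of the homogeneous extension), and $\int G_1(w)|w|^\alpha\, dw < \infty$; the complementary region contributes Gaussian-small terms since there $|u_0(z)| + |u_0(y)| \lesssim \M$. The gradient bound \eqref{eq:a-2} is obtained the same way with $\nabla a(y) = \int \nabla G_1(y-z)\, u_0(z)\, dz$, again subtracting the constant $u_0(y)$ against $\int \nabla G_1 = 0$ to improve decay from $|y|^{-1}$ to $|y|^{-1-\alpha}$ on $\R^2\setminus B_1$, and using $\int|\nabla G_1(w)||w|^\alpha\,dw<\infty$; near the origin one just uses $\nabla G_1 \in L^1$ and boundedness of $\|u_0\|_{L^1(B_2)}$.

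I expect the only mildly delicate point to be the \emph{sharp} decay rate $\langle y\rangle^{-1-\alpha}$ rather than merely $\langle y\rangle^{-1}$ in the difference and gradient estimates: this requires the cancellation against $u_0(y)$ (using $\int G_1 = 1$, $\int\nabla G_1 = 0$) together with the correct $C^\alpha$ scaling of the homogeneous extension of $u_0$ — namely that a $-1$-homogeneous function with $\|u_0\|_{C^\alpha(S_1)} \le \M$ satisfies $|u_0(z) - u_0(y)| \le C\M\, |z - y|^\alpha \max(|y|,|z|)^{-1-\alpha}$ on, say, $|z - y| \le \tfrac12|y|$. Everything else is a routine splitting of the heat-kernel convolution into a near-diagonal piece (where Hölder/homogeneity estimates apply) and far pieces (where the Gaussian provides arbitrary polynomial decay after a dyadic decomposition of the annuli $2^k \le |z| \le 2^{k+1}$). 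The verification $\mathcal{L} a = 0$ is conceptually the cleanest once one recognizes $a$ as a self-similar heat profile.
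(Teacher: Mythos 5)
Your proof is correct, and the verification of $\mathcal{L}a = 0$ (via the self-similar caloric profile $b(x,t) = t^{-1/2}a(x/\sqrt{t})$) is essentially the same as the paper's operator identity $\mathcal{L}e^\Delta = e^\Delta(\mathcal{L}-\Delta)$ together with Euler's identity for the $-1$-homogeneous $u_0$. For the pointwise bounds, however, you take a genuinely different route: you estimate the convolution $a = G_1 * u_0$ directly, splitting into near-diagonal, near-origin, and far regions, and obtain the sharp $\langle y\rangle^{-1-\alpha}$ decay by subtracting $u_0(y)$ against the normalizations $\int G_1 = 1$ and $\int\nabla G_1 = 0$, combined with the H{\"o}lder scaling $|u_0(z)-u_0(y)|\lesssim\M|z-y|^\alpha|y|^{-1-\alpha}$ for $|z-y|\le|y|/2$. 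The paper instead exploits the identity $(e^{t\Delta}u_0)(x) = t^{-1/2}a(x/\sqrt{t})$ in the opposite direction: the decay of $a-u_0$ and $\nabla a$ at spatial infinity is equivalent, after restricting to $|x|=1$ and setting $t=|y|^{-2}$, to the short-time parabolic H{\"o}lder regularity $\|e^{t\Delta}u_0\|_{C^\alpha_{\rm par}((B_2\setminus B_{1/2})\times[0,1])}\lesssim\M$ and the smoothing bound $\|\nabla e^{t\Delta}u_0\|_{L^\infty}\lesssim\M t^{-1/2+\alpha/2}$, both of which are standard parabolic estimates given $u_0\in C^\alpha(\text{annulus})\cap L^1_{\rm loc}$. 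The paper's route is shorter because it reuses off-the-shelf parabolic theory; yours is more elementary and self-contained but requires the careful region-by-region splitting you sketched. Both are valid; the only point you should be careful to make fully explicit is the estimate for the region $|z|\le|y|/2$, where you need $\int_{|z|\le|y|/2}|u_0|\,dz\lesssim\M|y|$ (which holds by $-1$-homogeneity in 2D) multiplied against the Gaussian tail $e^{-c|y|^2}$ to absorb the polynomial factor — you stated this correctly in passing.
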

\begin{proof}
	By definition, $a$ is clearly smooth and divergence free.  The fact $\mathcal{L} a = 0$ follows from the operator identity $\mathcal{L} e^\Delta = e^\Delta ( \mathcal{L} - \Delta)$ and the scale-invariance of $u_0$. Moreover, it is easy to check that for any $x\in \R^2$ and $t>0$,
	\begin{equation} \label{eq:etDeltau0}
		(e^{t\Delta} u_0) (x) = \frac{1}{\sqrt{t}} a\left( \frac{x}{\sqrt{t}} \right).
	\end{equation} 
	Noting that $u_0 \in C^\alpha(\R^2 \setminus \{0\}) \cap L^1_{\text{loc}}(\R^2)$, we have
	\begin{equation} \label{eq:etDeltau0-reg}
		\|e^{t\Delta} u_0\|_{C^\alpha_{\text{par}}((B_2 \setminus B_{1/2}) \times [0,1])} \lesssim \M,
	\end{equation}
	where $C^\alpha_{\text{par}}$ denotes the H\"older norm with respect to the parabolic distance
	\begin{equation*}
		d_{\text{par}}((x_1,t_1),(x_2, t_2)) :=\sqrt{ |x_1-x_2|^2 + |t_1-t_2|}.
	\end{equation*}
	In particular, for $|x| = 1$ and $t \in (0,1]$ there holds
	\begin{equation}
		\left|(e^{t\Delta} u_0) (x) - u_0(x)\right| \lesssim  \M \, t^{\frac{\alpha}{2}},
	\end{equation}
	which implies the first estimate in \eqref{eq:a-u0} via the identity \eqref{eq:etDeltau0}. The second estimate in \eqref{eq:a-u0} follows from the first one and the obvious  pointwise bound of $a$ in $B_1$. 
	
	In addition to \eqref{eq:etDeltau0-reg}, we also have the smoothing estimate
		\begin{equation} \label{eq:etDeltau0-reg-2}
		\|\nabla  e^{t\Delta} u_0\|_{L^\infty(B_2 \setminus B_{1/2})} \lesssim \M \, t^{-\frac12+\frac{\alpha}{2}}, \quad \forall t \in (0,1],
	\end{equation}
	which, together with the obvious  pointwise bound of $\nabla a$ in $B_1$, implies  \eqref{eq:a-2}.
	
\end{proof}

Due to \eqref{eq:Leray} and Lemma \ref{lem:a}, $V$ satisfies the perturbed Leray equations
\begin{equation}
	\label{eq:equationforV}
	\left\{
\begin{aligned}
	&- \Delta V - \frac{1}{2} (V + y \cdot \nabla_y V) + \mathbb{P} \left[(a+V) \cdot \nabla (a+V) \right]= 0, \\
	&\div V = 0 \, .
\end{aligned}\right.
\end{equation}
Introducing the nonlinear operator
\begin{equation}
	\label{eq:Kdef}
	\mathcal{K}V := \mathcal{L}^{-1}  \mathbb{P} [(a+V) \cdot \nabla (a+V) ],
\end{equation}
the system~\eqref{eq:equationforV} can be rewritten as 
\begin{equation}
	\label{eq:LeraySchauderEquation}
	V - \lambda \mathcal{K} V = 0
\end{equation}
with $\lambda = 1$. We will apply the Leray-Schauder fixed point theorem  to solve \eqref{eq:LeraySchauderEquation} in the Banach space $X_\alpha$, for which the following two results provide the key ingredients.

\begin{proposition}
	\label{prop:K}
	For any $\alpha \in (0,1)$, the nonlinear operator $\mathcal{K} : X_\alpha \to X_\alpha$ is well-defined, continuous and compact.
\end{proposition}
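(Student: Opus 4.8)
The plan is to verify, in turn, that $\mathcal{K}$ maps $X_\alpha$ into itself, that it is continuous, and that it is compact, all by exploiting the smoothing and weighted-decay properties of $\mathcal{L}^{-1}$ together with the algebra-type structure of the bilinear term. First I would record the fundamental mapping property of the Leray operator: since $\mathcal{L} = \Delta + \tfrac12(1 + y\cdot\nabla_y)$ is, after the change of variables $y = x/\sqrt{t}$ at $t=1$, conjugate to the heat semigroup acting on $-1$-homogeneous data (cf.\ the identity $\mathcal{L}e^\Delta = e^\Delta(\mathcal{L}-\Delta)$ used in Lemma~\ref{lem:a}), one expects $\mathcal{L}^{-1}\mathbb{P}\div$ to map a $-2-\alpha$-decaying, say $C^{\alpha}$-bounded, tensor field $F$ (i.e.\ $\langle y\rangle^{2+\alpha}F \in L^\infty$) into $X_\alpha$, with $\|\mathcal{L}^{-1}\mathbb{P}\div F\|_{X_\alpha} \lesssim \|\langle y\rangle^{2+\alpha}F\|_{L^\infty}$ (plus possibly a $C^\alpha$ seminorm on $F$). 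This is the analogue of the linear estimates in~\cite{jiasverakselfsim}; I would either cite it or reprove it by writing $\mathcal{L}^{-1}$ via the representation
\begin{equation}
    (\mathcal{L}^{-1} g)(y) = -\int_0^1 \frac{1}{s} (e^{(1-s)\Delta} g)\!\left(\frac{y}{\sqrt{s}}\right) \frac{ds}{s}
\end{equation}
(the standard Duhamel-type formula adapted to self-similar variables) and estimating the heat kernel against the weight $\langle y\rangle^{-2-\alpha}$, carefully splitting $|y|\lesssim 1$ and $|y|\gtrsim 1$ and using that $\int_0^1 s^{-1+\alpha/2}\,ds<\infty$ to absorb the singular factors.

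Granting that linear estimate, the nonlinear step is bookkeeping on the quadratic term. Write $(a+V)\otimes(a+V) = a\otimes a + a\otimes V + V\otimes a + V\otimes V$ and $(a+V)\cdot\nabla(a+V) = \div\big((a+V)\otimes(a+V)\big)$ using $\div(a+V)=0$, so that $\mathcal{K}V = \mathcal{L}^{-1}\mathbb{P}\div F[V]$ with $F[V] = (a+V)\otimes(a+V)$. Then I would bound $\langle y\rangle^{2+\alpha}F[V]$ in $L^\infty$ term by term: $\langle y\rangle a \in L^\infty$ by Lemma~\ref{lem:a}, so $\langle y\rangle^{2+\alpha}\,a\otimes a \lesssim \langle y\rangle^{-\alpha}\|\langle y\rangle a\|_{L^\infty}^2 \in L^\infty$; the cross terms obey $\langle y\rangle^{2+\alpha}\,a\otimes V \lesssim \langle y\rangle^{-1}\,\|\langle y\rangle a\|_{L^\infty}\|\langle y\rangle^{1+\alpha}V\|_{L^\infty} \in L^\infty$; and $\langle y\rangle^{2+\alpha}\,V\otimes V \lesssim \langle y\rangle^{-\alpha}\,\|V\|_{X_\alpha}^2 \in L^\infty$ since $2+\alpha \le 2(1+\alpha)$. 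If the linear estimate also needs a Hölder seminorm of $F[V]$, I would obtain it from interior Schauder/parabolic regularity: $V = \mathcal{K}V$-type elements are smooth with locally bounded derivatives controlled by $\|V\|_{X_\alpha}$, and $a\in C^\infty$ with the weighted gradient bound~\eqref{eq:a-2}, so products stay Hölder with the right decay. This yields $\|\mathcal{K}V\|_{X_\alpha} \le C(\M)(1 + \|V\|_{X_\alpha}^2)$, hence $\mathcal{K}$ is well-defined.

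For continuity, I would use the bilinearity: $\mathcal{K}V_1 - \mathcal{K}V_2 = \mathcal{L}^{-1}\mathbb{P}\div\big((a+V_1)\otimes(V_1-V_2) + (V_1-V_2)\otimes(a+V_2)\big)$, and the same weighted $L^\infty$ estimates give $\|\mathcal{K}V_1 - \mathcal{K}V_2\|_{X_\alpha} \lesssim C(\M)\,(1 + \|V_1\|_{X_\alpha} + \|V_2\|_{X_\alpha})\,\|V_1-V_2\|_{X_\alpha}$, which is local Lipschitz continuity, in particular continuity. For compactness, the key is that $\mathcal{L}^{-1}\mathbb{P}\div$ gains not only decay but also regularity: if $\langle y\rangle^{2+\alpha}F \in L^\infty$ then $\mathcal{K}V$ and its first derivatives are bounded with a slightly better decay rate, say $\|\langle y\rangle^{1+\alpha'}\nabla \mathcal{K}V\|_{L^\infty} + \|\langle y\rangle^{1+\alpha'}\mathcal{K}V\|_{C^{\alpha'}} \lesssim C(\M)(1+\|V\|_{X_\alpha}^2)$ for some $\alpha' \in (\alpha,1)$ (or simply $\alpha' = \alpha$ with an extra power of $\langle y\rangle$), since the worst decay in $F[V]$ is $\langle y\rangle^{-\alpha}$ from $a\otimes a$ and $V\otimes V$ — but here I should be slightly careful, because $a\otimes a$ decays exactly like $\langle y\rangle^{-2}$, giving room, whereas $V\otimes V$ only decays like $\langle y\rangle^{-2(1+\alpha)}$, which is \emph{better} than $\langle y\rangle^{-2-\alpha}$, so in fact the limiting decay of $F[V]$ is $\langle y\rangle^{-2}$ (from $a\otimes a$) and one genuinely gains. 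Thus a bounded set in $X_\alpha$ maps to a set that is bounded in a space compactly embedded in $X_\alpha$ — uniform decay at a better rate plus equicontinuity on compacta (via the gradient bound or a uniform Hölder bound) gives precompactness in $X_\alpha$ by a diagonal Arzelà--Ascoli argument: extract a locally uniformly convergent subsequence, and the improved weighted decay upgrades this to convergence in the $X_\alpha$ norm since the tails are uniformly small.

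I expect the main obstacle to be the \emph{linear estimate for $\mathcal{L}^{-1}\mathbb{P}\div$ in weighted $L^\infty$}, i.e.\ showing that the self-similar resolvent $\mathcal{L}^{-1}$ composed with a derivative and the (nonlocal) Leray projector maps $\langle y\rangle^{-2-\alpha}L^\infty$ boundedly into $X_\alpha = \langle y\rangle^{-1-\alpha}L^\infty$, with a genuine gain of decay and regularity. The delicate points are the interplay between the long-range kernel of $\mathbb{P}$ and the weights (near-field versus far-field splitting), and tracking the borderline integrability $\int_0^1 s^{-1+\alpha/2}\,ds$ in the Duhamel representation so that the singular prefactors are absorbed; everything downstream (continuity, compactness) is then a routine consequence of bilinearity and the decay margins noted above. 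If reproving this from scratch proves cumbersome, I would instead invoke the corresponding linear mapping properties established in~\cite{jiasverakselfsim} (which are dimension-independent at this linear level) and restrict attention to the nonlinear bookkeeping.
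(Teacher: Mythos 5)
Your overall plan — Duhamel-type representation of $\mathcal{L}^{-1}$, weighted pointwise kernel estimates, bilinear bookkeeping for well-definedness and continuity, and extra decay plus local regularity for compactness — is precisely the approach the paper takes (its Eq.~\eqref{eq:L-int-rep}, Lemma~\ref{lem:integral}, and the proof of Proposition~\ref{prop:K}). However, there is a genuine error in the formulation of your linear estimate, and it propagates through the well-definedness step.

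You posit that $\mathcal{L}^{-1}\mathbb{P}\div$ maps $\{F : \langle y\rangle^{2+\alpha}F \in L^\infty\}$ into $X_\alpha$, and then attempt to verify that $F[V]=(a+V)\otimes(a+V)$ lies in the former space. This fails: since $a$ only satisfies $|a(y)|\lesssim\langle y\rangle^{-1}$ (Lemma~\ref{lem:a}), we have $|a\otimes a|\lesssim\langle y\rangle^{-2}$, and therefore
\[
\langle y\rangle^{2+\alpha}\,|a\otimes a| \;\lesssim\; \langle y\rangle^{2+\alpha}\cdot\langle y\rangle^{-2}\,\|\langle y\rangle a\|_{L^\infty}^2 \;=\; \langle y\rangle^{+\alpha}\,\|\langle y\rangle a\|_{L^\infty}^2,
\]
which is unbounded — you wrote $\langle y\rangle^{-\alpha}$ where the exponent should be $+\alpha$. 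So $F[V]$ does not have $\langle y\rangle^{-2-\alpha}$ decay, only $\langle y\rangle^{-2}$ decay, and your claimed linear estimate cannot be applied. Interestingly, you essentially diagnose the correct situation in your compactness paragraph, where you observe that $a\otimes a$ decays only like $\langle y\rangle^{-2}$ and that this is the limiting term. The correct linear statement, which is what the paper proves in Lemma~\ref{lem:integral}, is that the kernel maps $\langle y\rangle^{-2}L^\infty$ into $\langle y\rangle^{-2}L^\infty$ — no gain of decay rate in the input, but the output automatically decays like $\langle y\rangle^{-2}$, which strictly beats the $\langle y\rangle^{-1-\alpha}$ required by $X_\alpha$. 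Once you replace your hypothesized mapping property by this one, the bookkeeping goes through, the continuity estimate is the same bilinear argument, and the compactness argument (improved tail decay $\langle y\rangle^{-2}$ versus $\langle y\rangle^{-1-\alpha}$, plus local regularity giving equicontinuity, plus diagonal extraction) is exactly the paper's. Two further minor remarks: your Duhamel formula has the wrong scaling factors (the heat semigroup should appear at time $(1-s)/s$, applied at the rescaled point $y/\sqrt{s}$, with prefactor $s^{-3/2}$; compare Eq.~\eqref{eq:L-int-rep}), and the paper avoids needing any H\"older seminorm of $F$ by absorbing $\mathbb{P}\div$ into an explicit smooth convolution kernel $H_{ijk}$, so that concern of yours turns out to be unnecessary.
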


\begin{proposition}
\label{prop:fullaprioriestimate}
For any solution $V \in X_\alpha$ to the fixed point equation \eqref{eq:LeraySchauderEquation} with $\lambda \in [0,1]$, we have the uniform {a priori} estimate
	\begin{equation} \label{eq:fullaprioriestimate}
		\| V \|_{X_\alpha} \le C(\alpha, \M).
	\end{equation}
\end{proposition}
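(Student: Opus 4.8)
The plan is to treat the parameter $\lambda\in[0,1]$ as inert and establish bounds for a generic solution $V\in X_\alpha$ of $V=\lambda\mathcal{K}V$, equivalently for $U=a+V$ solving the $\lambda$-scaled Leray system with pressure $P$. One first records that $V\in X_\alpha$ automatically gives $U=a+V$ with $\langle y\rangle U\in L^\infty$ and, by elliptic regularity applied to the steady-type system \eqref{eq:LerayEquations}, $U$ is smooth with all the decay needed to justify the integrations by parts below. The strategy proceeds in the four stages outlined in the introduction, each supplying a stronger \emph{a priori} estimate. \emph{Stage 1 (energy estimates).} Starting from the global Dirichlet energy bound \eqref{eq:Dirichletenergyinintro} (Lemma~\ref{b-en-global}) one gets $\|\nabla U\|_{L^2(\R^2)}\lesssim \M$; then from the vorticity profile equation one obtains $\|\Delta U\|_{L^2}=\|\nabla\Omega\|_{L^2}\lesssim\M^2$ (Lemma~\ref{vort-est}). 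Crucially these constants do not depend on $\lambda$, since the nonlinear term only improves signs in these identities (or its contribution is quadratic and absorbed).

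\emph{Stage 2 (pressure and Bernoulli function).} Using the Poisson equation $-\Delta P=\nabla U:(\nabla U)^T$, the div-curl structure of the right-hand side, and the $\mathcal{H}^1$-$\BMO$ duality, one bounds $\|\nabla P\|_{L^2}+\|P\|_{L^\infty}\lesssim\M^2$ (Lemma~\ref{lem:pressurebounds}). With $P$ in hand, one examines the self-similar Bernoulli function $\Phi=\tfrac12|U|^2-\tfrac{y}{2}\cdot U+P$, which is a subsolution of $(-\Delta+\tfrac{\lambda}{2}U-\tfrac{y}{2}\cdot\nabla)\Phi=-\lambda|\Omega|^2\le 0$ (the sign is unchanged for $\lambda\in[0,1]$, and for $\lambda=0$ it is harmonic plus lower-order). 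The maximum principle then yields $\Phi(y)\le C(\M)$ everywhere, and by combining with the far-field control of $U$ from Lemma~\ref{lem:a} one extracts a good sphere $S_{R^*}$ with $\sup_{S_{R^*}}|U|\le C(\M)$ (Lemma~\ref{lem:local}); an interior estimate near the origin follows. \emph{Stage 3 (weighted decay).} Exploiting the scaling term $y\cdot\nabla_y U$ one shows $\p_r(rU)$ is almost $L^2$, leading to the weighted estimate $\||y|^{-\gamma_2}(|U|+|P|)\|_{L^{2-\gamma_1}(\R^2)}\le C(\M)$ of Proposition~\ref{prop:elliptic} for suitable $0<\gamma_1\ll\gamma_2\ll 1$. \emph{Stage 4 (bootstrap to pointwise decay).} One passes back to the time-dependent picture on an annular region $\mathcal{A}\times[0,t_0]$ away from $x=0$, where the weighted integral bound plays the role of a subcritical smallness/boundedness input; the local-in-space smoothing and parabolic regularity of Section~\ref{sec:bootstrap} then upgrade this to $|U-a|\lesssim\langle y\rangle^{-1-\alpha}$, i.e.\ $\|V\|_{X_\alpha}\le C(\alpha,\M)$. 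Throughout one checks that every constant is monotone/uniform in $\lambda\in[0,1]$: the $\lambda$-dependence enters only through the nonlinearity, which is handled by the same sign conditions and quadratic absorption arguments that work at $\lambda=1$.

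The main obstacle is Stage 3, the passage from the $O(1)$ control of $U$ on one good sphere (plus $\|\nabla U\|_{L^2}$, which in 2D permits logarithmic growth) to a genuine decay rate. This is exactly where the 3D strategy has no analogue, since $\dot H^1(\R^2)\not\hookrightarrow L^\infty_{\loc}$-type embeddings are too weak; the delicate $L^2$-near-membership of $\p_r(rU)$, which uses the dominance of the scaling operator $\tfrac12(1+y\cdot\nabla_y)$ at spatial infinity, must be carried out carefully, controlling the logarithmic loss coming from $(U\cdot\nabla)U$ by choosing $\gamma_1\ll\gamma_2$. A secondary subtlety is ensuring the whole chain — particularly the maximum principle for $\Phi$ and the limits $\limsup_{R\to\infty}$ on large spheres — is rigorously justified given only the a priori regularity available at the start; one resolves this by first proving enough elliptic regularity for $U$ from membership in $X_\alpha$, then closing the estimates.
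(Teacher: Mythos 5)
Your proposal follows the paper's own proof route essentially verbatim: energy and vorticity estimates (Lemmas~\ref{b-en-global}, \ref{vort-est}), Hardy-space pressure bounds (Lemma~\ref{lem:pressurebounds}), a maximum principle for the Bernoulli function giving local control (Lemma~\ref{lem:local}), the weighted integral estimate (Proposition~\ref{prop:elliptic}), and the parabolic bootstrap in annular regions using Jia--\v{S}ver\'ak local-in-space smoothing. One small slip: for the $\lambda$-scaled system the Bernoulli function must itself carry the parameter, $\Phi=\tfrac{\lambda}{2}|U|^2-\tfrac{y}{2}\cdot U+P$, and satisfies $\bigl(-\Delta+(\lambda U-\tfrac{y}{2})\cdot\nabla\bigr)\Phi=-\lambda\Omega^2$ (not the $\lambda=1$ version with drift $\tfrac{\lambda}{2}U$ that you wrote).
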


Proposition \ref{prop:K} will be proved in Section \ref{sec:K}, and Proposition \ref{prop:fullaprioriestimate} will be proved in Sections \ref{sec:elliptic}--\ref{sec:bootstrap}. Together, they immediately  imply Theorem \ref{thm:main}   via the classical Leray-Schauder theory (see, e.g.,  \cite[Theorem 1.2.17]{KorobBook}).

\section{Properties of \texorpdfstring{$\mathcal{K}$}{K}} \label{sec:K}

\begin{proof}[Proof of Proposition~\ref{prop:K}]
    First of all, we rigorously define the operator $\mathcal{L}^{-1}$ by writing down its integral representation.  The linear equation
    \begin{equation}
        -\mathcal{L}W = -\Delta W - \frac{1}{2} (W + y \cdot \nabla_y W) = F
    \end{equation}
    with $W \in X_\alpha$ can be rewritten as the inhomogeneous heat equation
    \begin{equation} \label{eq:w-heat}
        \partial_t w - \Delta w = f
    \end{equation}
    in $\R^{2+1}$ with
    \begin{equation}
        w(x,t) := \frac{1}{\sqrt{t}} W\left(\frac{x}{\sqrt{t}}\right)
    \end{equation}
    and
    \begin{equation}
        f(x,t) := \frac{1}{t\sqrt{t}} F\left(\frac{x}{\sqrt{t}}\right).
    \end{equation}
    By the definition of $X_\alpha$, for $|x|>\sqrt{t}$ we have
    \begin{equation}
        |w(x,t)| \lesssim \frac{1}{\sqrt{t}} \left( \frac{|x|}{\sqrt{t}}\right)^{-1-\alpha} \|W\|_{X_\alpha} \le t^{\frac{\alpha}{2}}|x|^{-1-\alpha} \|W\|_{X_\alpha},
    \end{equation}
    and in particular, $\lim_{t \to 0+} w(x,t) = 0$ for any $x \in \R^2 \setminus \{0\}$.   
    Solving \eqref{eq:w-heat} with zero initial datum, we get
    \begin{equation}
        W(x) = w(x,1) = \int_0^1 \int_{\R^2} \frac{1}{4 \pi (1-t)} e^{-\frac{|x-x'|^2}{4(1-t)}} f(x',t) dx'  dt,
    \end{equation}
    which then leads to the integral representation formula
    \begin{align} \label{eq:L-int-rep}
        [(-\mathcal{L})^{-1} F](x) =  \int_0^1 \int_{\R^2} \frac{1}{4 \pi (1-t) t^\frac32} e^{-\frac{|x-x'|^2}{4(1-t)}} F\left(\frac{x'}{\sqrt{t}}\right)  dx'  dt.
    \end{align}
    
    Based on \eqref{eq:L-int-rep}, we now  prove the boundedness of $\mathcal{K}: X_\alpha \to X_\alpha$. Writing $U = a+V$, by \eqref{eq:a-u0} we know that
    \begin{equation} \label{eq:U-upper}
        |U(x)| \lesssim \langle x \rangle^{-1} (\M+\|V\|_{X_\alpha}).     
    \end{equation}
    By definition of the Leray projector, there holds
    \begin{equation}
        \mathbb{P}(U \cdot \nabla U) = U \cdot \nabla U + \nabla P
    \end{equation}
    with
    \begin{equation} \label{eq:P-def-1}
         P = - \Delta^{-1} \div \div (U\otimes U).
    \end{equation}
    Using the identities
    \begin{equation}
        (U \cdot \nabla U)\left(\frac{x'}{\sqrt{t}}\right) = \sqrt{t} \, U\left(\frac{x'}{\sqrt{t}}\right) \cdot\nabla_{x'} U\left(\frac{x'}{\sqrt{t}}\right),
    \end{equation}
    \begin{equation}
        (\nabla P)\left(\frac{x'}{\sqrt{t}}\right) = \sqrt{t} \, \nabla_{x'} P\left(\frac{x'}{\sqrt{t}}\right) = -\sqrt{t} \, (\nabla \Delta^{-1} \div \div)_{x'} (U\otimes U) \left(\frac{x'}{\sqrt{t}}\right),
    \end{equation}
    and integration by parts, we obtain, for $i=1,2$,
    \begin{align} \label{eq:KV-rep}
        (\mathcal{K}V)_i &= \mathcal{L}^{-1} [\mathbb{P} (U \cdot \nabla U)]_i \nonumber \\
        &= -\sum_{j,k=1}^2 \int_0^1 \int_{\R^2} H_{ijk} (x-x',t) \, (U_j U_k)\left(\frac{x'}{\sqrt{t}}\right) dx' dt,
    \end{align}
    where we denote
    \begin{align}
        H_{ijk}(x,t) :=  (\delta_{ik}\partial_j-\partial^3_{ijk} \Delta^{-1}) \left(  \frac{1}{4 \pi (1-t) t} e^{-\frac{|x|^2}{4(1-t)}} \right).
    \end{align}
    Since the kernel of $\partial^3_{ijk} \Delta^{-1}$ decays like $\langle x \rangle^{-3}$, there holds 
    \begin{equation}
        \left|H_{ijk}\left(x,\frac12\right)\right| \lesssim \langle x \rangle^{-3}.
    \end{equation}
    Noticing that
    \begin{equation}
        H_{ijk}(x,t) = \frac{1}{4\sqrt{2} (1-t)^\frac32 t} H_{ijk} \left(\frac{x}{\sqrt{2(1-t)}}, \frac12\right),
    \end{equation}
    we get the precise upper bound
    \begin{equation} \label{eq:H-upper}
        |H_{ijk}(x,t)| \lesssim \frac{1}{(1-t)^\frac32 t} \left\langle \frac{x}{\sqrt{1-t}} \right\rangle^{-3}, \quad \forall x\in \R^2, \, t\in (0,1).
    \end{equation}
    By \eqref{eq:U-upper}, \eqref{eq:H-upper}  and Lemma \ref{lem:integral} proved below, we have 
    \begin{align} \label{eq:KV-extra-decay}
        |(\mathcal{K}V)(x)| &\lesssim \int_0^1 \int_{\R^2} \frac{1}{(1-t)^\frac32 t} \left \langle \frac{x-x'}{\sqrt{1-t}} \right \rangle^{-3} \left \langle \frac{x'}{\sqrt{t}} \right\rangle^{-2}  dx'dt \cdot (\M+\|V\|_{X_\alpha})^2 \nonumber \\
        &\lesssim \langle x \rangle^{-2} (\M+\|V\|_{X_\alpha})^2.
    \end{align}
    In particular, there holds
    \begin{align} \label{eq:KV-Xalpha}
        \|\mathcal{K}V\|_{X_\alpha} \lesssim (\M+\|V\|_{X_\alpha})^2,
    \end{align}
    which shows that $\mathcal{K}:X_\alpha \to X_\alpha$ is well-defined and bounded. 
    
    Next, we prove that $\mathcal{K}:X_\alpha \to X_\alpha$ is continuous. By \eqref{eq:KV-rep}, we can write
    \begin{align} \label{eq:KV-diff-rep}
        (\mathcal{K}V_1 - \mathcal{K} V_2)_i &= \sum_{j,k=1}^2 \int_0^1 \int_{\R^2} H_{ijk} (x-x',t) \, \left[(a+V_1)_j (V_1-V_2)_k\right]\left(\frac{x'}{\sqrt{t}}\right) dx' dt \nonumber \\
        &\quad + \sum_{j,k=1}^2 \int_0^1 \int_{\R^2} H_{ijk} (x-x',t) \, \left[(V_1-V_2)_j (a+V_2)_k\right]\left(\frac{x'}{\sqrt{t}}\right) dx' dt.
    \end{align}
    Similar to \eqref{eq:KV-Xalpha}, there holds
    \begin{equation}
         \|\mathcal{K}V_1 - \mathcal{K}V_2\|_{X_\alpha} \lesssim (\M +\|V_1\|_{X_\alpha}+\|V_2\|_{X_\alpha})\|V_1-V_2\|_{X_\alpha},
    \end{equation}
    which implies the claimed continuity.

    Finally, we prove the compactness of $\mathcal{K}: X_\alpha \to X_\alpha$. By \eqref{eq:U-upper}, there holds
    \begin{equation} \label{eq:U-Ls-1}
        \|U\|_{ L^s(\R^2)} \lesssim_{s} \M + \|V\|_{X_\alpha}, \quad \forall s \in (2, +\infty],
    \end{equation}
    hence by \eqref{eq:P-def-1} and the estimates for Riesz transforms, we obtain
    \begin{equation} \label{eq:P-Ls-1}
        \|P\|_{ L^s(\R^2)} \lesssim_s (\M + \|V\|_{X_\alpha})^2, \quad \forall s \in (1, +\infty).
    \end{equation}
    By \eqref{eq:KV-extra-decay} and the local regularity theory for $\mathcal{L}$ as an elliptic operator, we have the local estimates
    \begin{equation} \label{eq:KV-reg}
        \|\mathcal{K}V\|_{W^{1,s}(B_R)} \lesssim C(s, R, \M, \|V\|_{X_\alpha}), \quad \forall R>0, \ s \in (1, +\infty).
    \end{equation}
    For $s > 2$, we recall the compact embedding $W^{1,s}(B_R) \hookrightarrow L^\infty(B_R)$ for any finite $R$. Now, suppose that $V^{k}$, $k = 1,2,3,\dots$ are a sequence of functions with the norms $\|V^{k}\|_{X_\alpha}$ uniformly bounded. By \eqref{eq:KV-Xalpha}, \eqref{eq:KV-reg}  and the standard diagonal argument, there exist a sequence of integers $k_j \to +\infty$ such that $\mathcal{K} V^{k_j}$ converges locally uniformly to some function $W^\infty \in X_\alpha$. In particular, for any $n \ge 1$, we can find $j(n) \ge 1$ such that 
    \begin{equation} \label{eq:inside-conv}
        \|(\mathcal{K}V^{k_{j(n)}} - W^\infty) \cdot \mathbf{1}_{B_n}\|_{X_\alpha} \le \frac{1}{n}.
    \end{equation} 
     On the other hand, by \eqref{eq:KV-extra-decay}, we know that
     \begin{equation} \label{eq:outside-conv}
         \|(\mathcal{K}V^{k_{j(n)}} - W^\infty) \cdot \mathbf{1}_{\R^2\setminus B_n}\|_{X_\alpha} \lesssim n^{-1+\alpha} \left(\M+\sup_k \|V^k\|_{X_\alpha}\right)^2.
     \end{equation} 
    Combining \eqref{eq:inside-conv} and \eqref{eq:outside-conv}, we deduce $\mathcal{K}V^{k_j(n)} \to W^\infty$ in $X_\alpha$. This proves the claimed compactness.

\end{proof}

\begin{lemma} \label{lem:integral} For any $x \in \R^2$, there holds
    \begin{equation} \label{eq:integral}
        \int_0^1 \int_{\R^2} \frac{1}{(1-t)^\frac32 t} \left\langle \frac{x-x'}{\sqrt{1-t}} \right\rangle^{-3} \left\langle \frac{x'}{\sqrt{t}} \right\rangle^{-2}  dx'dt \lesssim \left\langle x \right\rangle^{-2}.
    \end{equation}
\end{lemma}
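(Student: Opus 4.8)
The plan is to recast the integrand in a cleaner algebraic form, perform the $t$-integration explicitly, and then estimate the two resulting spatial convolutions by a standard near/far decomposition. \emph{Step 1 (Reduction).} Using the elementary identities $(1-t)^{-3/2}\langle z/\sqrt{1-t}\rangle^{-3}=(1-t+|z|^2)^{-3/2}$ and $t^{-1}\langle z/\sqrt{t}\rangle^{-2}=(t+|z|^2)^{-1}$, the left-hand side of \eqref{eq:integral} becomes
\[
\int_0^1\int_{\R^2}\frac{dx'\,dt}{\big(1-t+|x-x'|^2\big)^{3/2}\,\big(t+|x'|^2\big)}.
\]

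\emph{Step 2 ($t$-integration).} Fix $x'$, abbreviate $A=|x-x'|^2$ and $B=|x'|^2$, and split $\int_0^1=\int_0^{1/2}+\int_{1/2}^1$. On $[0,1/2]$ the quantity $1-t+A$ is comparable to $1+A$, so that part is $\lesssim (1+A)^{-3/2}\int_0^{1/2}(t+B)^{-1}\,dt=(1+A)^{-3/2}\log\big(1+\frac{1}{2B}\big)$. On $[1/2,1]$ the quantity $t+B$ is comparable to $1+B$, and the substitution $u=1-t$ gives $\int_{1/2}^1(1-t+A)^{-3/2}\,dt=2\big(A^{-1/2}-(A+\frac{1}{2})^{-1/2}\big)\lesssim|x-x'|^{-1}\langle x-x'\rangle^{-2}$. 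Consequently
\[
\int_0^1\frac{dt}{(1-t+A)^{3/2}(t+B)}\ \lesssim\ \frac{\log\!\left(1+\frac{1}{2|x'|^2}\right)}{\langle x-x'\rangle^{3}}\ +\ \frac{1}{\langle x'\rangle^{2}\,|x-x'|\,\langle x-x'\rangle^{2}}.
\]

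\emph{Step 3 (Spatial convolutions).} It therefore suffices to prove
\[
\int_{\R^2}\frac{\log\!\left(1+\frac{1}{2|x'|^2}\right)}{\langle x-x'\rangle^{3}}\,dx'\ \lesssim\ \langle x\rangle^{-2},\qquad
\int_{\R^2}\frac{dx'}{\langle x'\rangle^{2}\,|x-x'|\,\langle x-x'\rangle^{2}}\ \lesssim\ \langle x\rangle^{-2}.
\]
In each integral the first factor is locally integrable on $\R^2$, and for the first it moreover decays like $|x'|^{-2}$; the second factor is globally integrable (its $|x-x'|^{-1}$ singularity being integrable in the plane) and decays like $\langle x-x'\rangle^{-3}$. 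Hence, for $|x|\le 2$, local integrability and decay at infinity bound each integral by an absolute constant, which is $\lesssim\langle x\rangle^{-2}$. For $|x|\ge 2$, split $\R^2$ into $\{|x'|\le|x|/2\}$, $\{|x-x'|\le|x|/2\}$, and the complementary bulk. On $\{|x'|\le|x|/2\}$ one has $\langle x-x'\rangle^{-3}\lesssim\langle x\rangle^{-3}$, which beats the at-most-logarithmic growth of the integral of the first factor over that set. On $\{|x-x'|\le|x|/2\}$ one has $|x'|\gtrsim|x|$, so the first factor is $\lesssim\langle x\rangle^{-2}$, while the second factor integrates to $O(1)$ over the ball. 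On the bulk, where simultaneously $|x'|\gtrsim|x|$ and $|x-x'|\gtrsim|x|$, the first factor is again $\lesssim\langle x\rangle^{-2}$, and integrating the second factor over $\{|x-x'|>|x|/2\}$ contributes an extra $O(\langle x\rangle^{-1})$. Summing the pieces, both integrals are $\lesssim\langle x\rangle^{-2}$, which completes the proof.

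\emph{Main obstacle.} The only real subtlety is the borderline non-integrability of $\langle x'\rangle^{-2}$, i.e.\ of $(t+|x'|^2)^{-1}$ at spatial infinity, which both produces the logarithmic weight in Step 2 and threatens a spurious $\log\langle x\rangle$ in Step 3. One must retain the precise fact that $\int_0^{1/2}(t+B)^{-1}\,dt=\log(1+\frac{1}{2B})$ decays like $B^{-1}$ as $B\to\infty$, rather than bounding it crudely, so that the spatial convolution converges at the sharp $\langle x\rangle^{-2}$ rate; and one must always pair the logarithmic weight either with the $\langle x-x'\rangle^{-3}$ decay or with a small region, so that no logarithm survives at the end. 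Arranging this bookkeeping uniformly in $|x|$ is where the (otherwise elementary) work lies.
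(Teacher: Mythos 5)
Your proof is correct and takes a genuinely different route from the paper's. You exploit the clean algebraic identities $(1-t)^{-3/2}\langle z/\sqrt{1-t}\rangle^{-3}=(1-t+|z|^2)^{-3/2}$ and $t^{-1}\langle z/\sqrt{t}\rangle^{-2}=(t+|z|^2)^{-1}$ to rewrite the integrand, carry out the $t$-integration explicitly (splitting at $t=1/2$), and reduce the lemma to two spatial convolution estimates handled by a standard near/far decomposition in $x'$. The paper instead fixes $t$ first and estimates the $x'$-integral case by case ($t\lessgtr 1/2$, $|x|\lessgtr 10$), obtaining intermediate bounds like $|x|^{-2}+|x|^{-3}|\ln t|$ and $(1-t)^{-1/2}|x|^{-2}$, and only then integrates over $t$. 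Both approaches are elementary and rest on the same observation that the $\langle x-x'\rangle^{-3}$ decay must be used to absorb the borderline $\langle x'\rangle^{-2}$ tail (and the resulting logarithm); your ordering buys a somewhat cleaner structure since the $t$-integral evaluates in closed form and the remaining work is pure convolution bookkeeping, while the paper's case split is more ad hoc but avoids having to track the precise asymptotics of $\log(1+\frac{1}{2|x'|^2})$. Your verification of the key bound $\int_{1/2}^1(1-t+A)^{-3/2}\,dt\lesssim A^{-1/2}(1+A)^{-1}$ and the final three-region decompositions of the two convolutions are accurate.
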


\begin{proof}
    Denote $I$ for the integrand in \eqref{eq:integral} and consider the following cases.

    \smallskip
    \textbf{Case 1.} $0 < t \le \frac12$ and $|x|>10$. In this case, we have
    \begin{align} \label{eq:case1-I}
        I \lesssim t^{-1} \langle x-x' \rangle^{-3}  \left\langle \frac{x'}{\sqrt{t}} \right\rangle^{-2},
    \end{align}
    and thus,
    \begin{align}
        \int_{B_{|x|/2}(x)} I dx' &\lesssim |x|^{-2} \int_{B_{|x|/2}(x) } \langle x-x' \rangle^{-3} dx' \lesssim |x|^{-2},\\
        \int_{B_{2|x|}(0) \setminus B_{|x|/2}(x)} I dx' &\lesssim t^{-1} |x|^{-3} \int_{B_{2|x|}} \left\langle \frac{x'}{\sqrt{t}} \right\rangle^{-2} dx' \lesssim |x|^{-3} \ln \left( \frac{|x|}{\sqrt{t}} \right), \\
        \int_{\R^2 \setminus B_{2|x|}} I dx' &\lesssim \int_{\R^2 \setminus B_{2|x|}} |x'|^{-5}  \lesssim |x|^{-3}.
    \end{align}
    Adding the above three estimates, we get 
    \begin{equation}
        \int_{\R^2} I dx' \lesssim |x|^{-2} + |x|^{-3} |\ln t|. 
    \end{equation}

    \smallskip
    \textbf{Case 2.} $0 < t \le \frac12$ and $|x| \le 10$. In this case, we have \eqref{eq:case1-I} and thus,
    \begin{align}
        \int_{B_{20}} I dx' &\lesssim t^{-1} \int_{B_{20}} \left\langle \frac{x'}{\sqrt{t}} \right\rangle^{-2} dx' \lesssim |\ln t|, \\
        \int_{\R^2 \setminus B_{20}} I dx' &\lesssim  \int_{\R^2 \setminus B_{20}} |x'|^{-5} dx' \lesssim 1.
    \end{align}
    Adding the above two estimates, we get
    \begin{equation}
        \int_{\R^2} I dx' \lesssim |\ln t|. 
    \end{equation}
    
    \smallskip
    \textbf{Case 3.} $  \frac12< t < 1$ and $|x|>10$. In this case, we have
    \begin{align} \label{eq:case3-I}
        I \lesssim (1-t)^{-\frac32} \left\langle \frac{x-x'}{\sqrt{1-t}} \right\rangle^{-3}  \langle x' \rangle^{-2},
    \end{align}
    and thus, 
    \begin{align}
        \int_{B_{|x|/2}} I dx' &\lesssim |x|^{-3} \int_{B_{|x|/2} } \langle x' \rangle^{-2} dx' \lesssim |x|^{-3} \ln |x|,\\
        \int_{B_{2|x|} \setminus B_{|x|/2}} I dx' &\lesssim (1-t)^{-\frac32} |x|^{-2} \int_{B_{2|x|}} \left\langle \frac{x-x'}{\sqrt{1-t}} \right\rangle^{-3} dx' \lesssim (1-t)^{-\frac12} |x|^{-2}, \\
        \int_{\R^2 \setminus B_{2|x|}} I dx' &\lesssim \int_{\R^2 \setminus B_{2|x|}} |x'|^{-5}  \lesssim |x|^{-3}.
    \end{align}
    Adding the above three estimates, we get
    \begin{equation}
        \int_{\R^2} I dx' \lesssim (1-t)^{-\frac12}  |x|^{-2}. 
    \end{equation}

       \smallskip
    \textbf{Case 4.} $  \frac12< t < 1$ and $|x| \le 10$. In this case, we have \eqref{eq:case3-I} and
    \begin{align}
        \int_{B_{20}} I dx' &\lesssim (1-t)^{-\frac32} \int_{B_{20}} \left\langle \frac{x-x'}{\sqrt{1-t}} \right\rangle^{-3} dx' \lesssim (1-t)^{-\frac12},  \\
        \int_{\R^2 \setminus B_{20}} I dx' &\lesssim  \int_{\R^2 \setminus B_{20}} |x'|^{-5} dx' \lesssim 1. 
    \end{align}
    Adding the above two estimates, we get
    \begin{equation}
        \int_{\R^2} I dx' \lesssim (1-t)^{-\frac12}. 
    \end{equation}

    \smallskip
    Finally, combining all the above cases, we conclude that
    \begin{align}
        \int_0^1 \int_{\R^2} I dx' dt \lesssim \int_0^1 \left(  (1-t)^{-\frac12} \langle x \rangle^{-2}  + |\ln t| \langle x \rangle^{-3}  \right) dt \lesssim \langle x \rangle^{-2}.
    \end{align}
\end{proof}

\section{A priori estimates for the Leray equation} \label{sec:elliptic}

As in Proposition \ref{prop:fullaprioriestimate}, we consider a solution $V \in X_\alpha$ to the fixed-point equation \eqref{eq:LeraySchauderEquation} with $\lambda \in [0,1]$.  Writing $U = a + V$, there holds
\begin{equation} \label{eq:Ulambda}
	\left\{\begin{aligned}
	&- \Delta U - \frac{1}{2} (U + y \cdot \nabla U) + \lambda U \cdot \nabla U + \nabla P = 0 \\
	&\div U = 0,
	\end{aligned}\right.
\end{equation}
The pressure $P$ here is determined as
\begin{align} \label{eq:P-def}
	 P &= \lambda (-\Delta)^{-1} \div \div (U \otimes U) \nonumber \\
     & = - \frac{\lambda }{2\pi} \int_{\R^2}  \log |y-y'| \, \div  \div  (U\otimes U) (y') dy'.
\end{align}
For simplicity, we are suppressing the dependence of $(U,P)$ on $\lambda \in [0,1]$. By $V \in X_\alpha$ and \eqref{eq:a-u0}, as $y \to \infty$ we have
\begin{equation} \label{eq:U-asymp}
    U(y)  =  \frac{1}{|y|} a_0\left(\frac{y}{|y|}\right) + O\left(\frac{1}{|y|^{1+\alpha}}\right).
\end{equation}
Here the constant in $O(\cdot)$ depends on $\M$ and $\|V\|_{X_\alpha}$. According to the standard elliptic regularity theory, $U$ is locally smooth, hence all the calculations in this section hold in the classical sense. 

Our goal in this section is to derive weighted a priori estimates on $U$ and $P$ capturing their quantitative decay in space.  Although the finiteness of the weighted norms in Proposition \ref{prop:elliptic} is easily guaranteed by \eqref{eq:U-asymp}, the nontrivial point here is that they can be bounded in terms of the given datum $a_0$.  This is crucial for the application of the Leray-Schauder theorem.

For definiteness, we fix two constants
\begin{equation}
	\gamma_1 = \frac{1}{100}, \quad \gamma_2 = \frac{1}{10}.
\end{equation}

\begin{proposition} \label{prop:elliptic}
 There holds
	\begin{align} \label{eq:key-weighted-UP}
		\left\||y|^{-\gamma_2} (|U|+|P|)\right\|_{L^{2-\gamma_1}(\R^2)} 
		&\le C(\M).
	\end{align} 
\end{proposition}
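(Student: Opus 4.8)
The plan is to split $\R^2$ into a bounded region $B_{R_0}$, on which $(U,P)$ is already under control, and the exterior region $E:=\R^2\setminus B_{R_0}$, on which the decay must be extracted from the equation. I fix $R_0=1$ and first collect, from the earlier lemmas, that $\int_{\R^2}|\nabla U|^2\le C(\M)$ (Lemma~\ref{b-en-global}), $\int_{\R^2}|\Delta U|^2\le C(\M)$ (Lemma~\ref{vort-est}), $\|P\|_{L^\infty}+\|\nabla P\|_{L^2}\le C(\M)$ (Lemma~\ref{lem:pressurebounds}), and a good circle $S_{R^*}$ on which $|U|\le C(\M)$ (Lemma~\ref{lem:local}); combining the good circle with $\int(|\nabla U|^2+|\Delta U|^2)\le C(\M)$ through the Brezis--Gallouet inequality, and then with interior elliptic regularity for \eqref{eq:Ulambda}, one gets $\|U\|_{L^\infty(B_2)}+\|\nabla U\|_{L^\infty(S_1)}\le C(\M)$. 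I also record the pointwise consequence of the self-similar Bernoulli pressure $\Phi:=\tfrac12|U|^2-\tfrac{y}{2}\cdot U+P$: since $-\Delta\Phi+\tfrac12 U\cdot\nabla\Phi-\tfrac{y}{2}\cdot\nabla\Phi=-|\Omega|^2\le0$ has no interior maximum, $\Phi\le\limsup_{|y|\to\infty}\Phi\le C(\M)$, whence $|U(y)|\le C(\M)\langle y\rangle$ on $\R^2$. On $B_1$ the claim is immediate: $|U|+|P|\le C(\M)$ there, and $|y|^{-\gamma_2(2-\gamma_1)}$ is integrable near $0$ since $\gamma_2(2-\gamma_1)<2$. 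So everything reduces to $E$.

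The core is the weighted energy estimate on $E$. I fix an auxiliary exponent $\beta$ with $0<\beta<\tfrac12(\gamma_2-\tfrac{\gamma_1}{2-\gamma_1})$ (possible because $\gamma_1\ll\gamma_2$) and test the momentum equation in \eqref{eq:Ulambda} against the (non-solenoidal) field $|y|^{-2\beta}U$, integrating over $E$. The viscous term gives $\int_E|y|^{-2\beta}|\nabla U|^2$ plus a commutator $\lesssim\beta\int_E|y|^{-2\beta-1}|U||\nabla U|$; since $|y|^{-2\beta-2}\le|y|^{-2\beta}$ on $E$, Cauchy--Schwarz and Young absorb it into $\tfrac12$ of the first term and $2\beta^2\int_E|y|^{-2\beta}|U|^2$. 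The decisive point is the scaling term: writing $(y\cdot\nabla_yU)\cdot U=\tfrac12 y\cdot\nabla|U|^2$ and integrating by parts via $\div(|y|^{-2\beta}y)=(2-2\beta)|y|^{-2\beta}$, one finds that $-\tfrac12\int_E|y|^{-2\beta}(U+y\cdot\nabla_yU)\cdot U$ equals $-\tfrac{\beta}{2}\int_E|y|^{-2\beta}|U|^2$ up to a boundary integral on $S_1$; moved to the other side this supplies a genuine coercive zeroth-order term $+\tfrac{\beta}{2}\int_E|y|^{-2\beta}|U|^2$. This is the rigorous form of the heuristic that the scaling operator $\tfrac12(1+y\cdot\nabla_y)$ dominates at infinity. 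The pressure term integrates by parts (using $\div U=0$) to $2\beta\int_E|y|^{-2\beta-2}(y\cdot U)P$, bounded by $\|P\|_{L^\infty}\int_E|y|^{-2\beta-1}|U|$ and absorbed likewise. All boundary integrals on $S_1$ are $\le C(\M)$ by the previous paragraph, and those ``at infinity'' vanish thanks to the a priori asymptotics \eqref{eq:U-asymp}.

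The remaining term, and the main obstacle, is the nonlinear one, $\int_E|y|^{-2\beta}U\cdot(U\cdot\nabla U)=\tfrac12\int_E|y|^{-2\beta}U\cdot\nabla|U|^2$, which after integration by parts equals $\beta\int_E|y|^{-2\beta-2}(y\cdot U)|U|^2$ up to a boundary term, hence on any $E'\subseteq E$ is bounded by $\beta\,\||y|^{-1}U\|_{L^\infty(E')}\int_{E'}|y|^{-2\beta}|U|^2$. The crude bound $\||y|^{-1}U\|_{L^\infty}\le C(\M)$ is \emph{not} small enough to be absorbed by $\tfrac{\beta}{2}\int_E|y|^{-2\beta}|U|^2$ when $\M$ is large --- this is exactly the difficulty the proposition must surmount. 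I resolve it by a dyadic splitting $E=\{1<|y|<\rho_*\}\cup\{|y|>\rho_*\}$: on $A_k:=B_{2^{k+1}}\setminus B_{2^k}$ the radial average of $U$ grows at most like $\sqrt{\log 2^k}$ (a direct consequence of $\nabla U\in L^2(\R^2)$), while the oscillation of $U$ over $A_k$ is controlled, via Brezis--Gallouet, by $\|\nabla U\|_{L^2(A_k)}$ and $\|\Delta U\|_{L^2(A_k)}$; together these give $\|U\|_{L^\infty(A_k)}\le C(\M)\sqrt{1+k}$, so $\||y|^{-1}U\|_{L^\infty(A_k)}\le C(\M)\sqrt{1+k}\,2^{-k}\to0$. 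Choosing $\rho_*=\rho_*(\M)$ so that this quantity is $\le\tfrac14$ on $\{|y|>\rho_*\}$ lets the far part be absorbed, while on the bounded ring $\{1<|y|<\rho_*\}$ the nonlinear term is $\le C(\M)$ (this is where the logarithm enters the constant). One thereby obtains $\int_E|y|^{-2\beta}(|\nabla U|^2+|U|^2)\,dy\le C(\M)$.

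It remains to upgrade this to the stated $L^{2-\gamma_1}$ bound and to treat $P$. For $U$, a single Hölder inequality with exponents $\tfrac{2}{2-\gamma_1},\tfrac{2}{\gamma_1}$, via $|y|^{-\gamma_2(2-\gamma_1)}|U|^{2-\gamma_1}=(|y|^{-2\beta}|U|^2)^{(2-\gamma_1)/2}|y|^{(2-\gamma_1)(\beta-\gamma_2)}$, gives $\int_E|y|^{-\gamma_2(2-\gamma_1)}|U|^{2-\gamma_1}\le C(\M)$, the leftover weight being integrable on $E$ precisely because $\beta<\gamma_2-\tfrac{\gamma_1}{2-\gamma_1}$ --- the role of $\gamma_1\ll\gamma_2$. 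For the pressure, note that $\langle y\rangle^{-\beta}U\in H^1(\R^2)$ with norm $\le C(\M)$ (using the weighted bounds just proved on $E$ and the pointwise bounds on $B_1$, together with $|\nabla\langle y\rangle^{-\beta}|\lesssim\langle y\rangle^{-\beta}$), so by Sobolev embedding $\langle y\rangle^{-\beta}U\in L^4(\R^2)$, i.e. $U\otimes U\in L^2(\R^2,\langle y\rangle^{-4\beta}dx)$ with norm $\le C(\M)$; since $\langle y\rangle^{-4\beta}$ is an $A_2$ weight ($4\beta<2$) and $P=R_iR_j(U_iU_j)$ by \eqref{eq:P-def}, the weighted Calder\'on--Zygmund estimate gives $\|P\|_{L^2(\R^2,\langle y\rangle^{-4\beta}dx)}\le C(\M)$, and the same Hölder step as for $U$ (using $2\beta<\gamma_2-\tfrac{\gamma_1}{2-\gamma_1}$, which is why I took $\beta$ a touch smaller) finishes the bound for $P$. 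Combining the bounded and exterior regions yields \eqref{eq:key-weighted-UP}. The single genuinely delicate point is the nonlinear term: one must show $|y|^{-1}|U|\to0$ with a rate depending only on $\M$, and the $\sqrt{\log}$-growth of $U$ forced by $\nabla U\in L^2$ is exactly what makes this possible; a secondary, more routine point is carrying the weight through the pressure via weighted singular-integral estimates.
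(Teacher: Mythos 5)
Your proof is correct but takes a genuinely different route from the paper. You extract the weighted estimate on $U$ from a weighted energy identity, testing the Leray equation against $|y|^{-2\beta}U$ on $E=\R^2\setminus B_1$ and solving for $\tfrac{\beta}{2}\int_E|y|^{-2\beta}|U|^2$ (which works because $\int_E|y|^{-2\beta}|\nabla U|^2\le\int_{\R^2}|\nabla U|^2\le C(\M)$ is a priori finite). The paper instead bounds $\partial_r(rU)=U+y\cdot\nabla U$ in a weighted $L^{2-\varepsilon_1}$ space directly from the equation, via H\"older and the $L^2$ bounds on $\nabla U$, $\Delta U$, $\nabla P$ plus a weighted $L^N$ bound on $U$ with $\sqrt{\log}$ growth, and then integrates this first-order radial ODE from $r=2$ outwards. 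Both rest on the same structural fact that the scaling term dominates at infinity and use the same preparatory lemmas, but the paper's route avoids the delicate absorption you need: when you test against $|y|^{-2\beta}U$, the nonlinearity produces the cubic integral $\beta\int_E|y|^{-2\beta-2}(y\cdot U)|U|^2$, and the margin $\tfrac{\beta}{2}\int_E|y|^{-2\beta}|U|^2$ can only absorb it where $|y|^{-1}|U|$ is small, forcing the $\M$-dependent dyadic split and the Brezis--Gallouet bound $\|U\|_{L^\infty(A_k)}\lesssim C(\M)\sqrt{1+k}$; the paper's H\"older step never has to confront this, because $U\cdot\nabla U$ is bounded as a function using $\nabla U\in L^2$ together with the $\sqrt{\log}$ growth rather than integrated against $U$. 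In compensation, you arrive at a genuine weighted $L^2$ bound on $U$, strictly stronger than the paper's $L^{2-\gamma_1}$. Two presentational points worth tightening: (i) the scaling term's contribution, after integration by parts, is $-\tfrac{\beta}{2}\int_E|y|^{-2\beta}|U|^2$ on the side where the Dirichlet integral sits, so this is not a coercivity estimate in the usual sense (the quadratic form is sign-indefinite); the argument succeeds only because the Dirichlet piece is bounded a priori, allowing the identity to be solved for the weighted $L^2$ norm, and you should say so explicitly; (ii) the boundary integrals on $S_1$ contain $\partial_\nu U\cdot U$, so you need $\|\nabla U\|_{L^\infty(S_1)}\le C(\M)$, which requires an extra round of elliptic bootstrapping beyond Lemma~\ref{lem:local} (first upgrading $\nabla P$ to $L^q_{\rm loc}$ for large $q$ via the Poisson equation, then $U\in W^{2,q}_{\rm loc}$); alternatively, place the inner boundary at a good circle $S_{r_0}$, $r_0\in(1,2)$, chosen by the mean value theorem from $\int(|\nabla U|^2+|U|^2)\le C(\M)$, exactly as the paper does elsewhere.
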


To prove this result, we have to exploit the inherent structure of the Leray equation \eqref{eq:Ulambda} with coefficient $\lambda$  and acquire a number of preliminary estimates first.

\begin{lemma}
	There holds
	\begin{equation} \label{eq:nablaU-bound}
		\|\nabla U\|_{L^\infty(\R^2)} \lesssim C(\alpha, \M, \|V\|_{X_\alpha}).
	\end{equation}
\end{lemma}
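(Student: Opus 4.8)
The plan is to reduce to a bound on $\nabla V$ (the companion field $\nabla a$ being already controlled by \eqref{eq:a-2} in Lemma~\ref{lem:a}), to get interior regularity on compact sets by a routine elliptic bootstrap, and then --- the real point --- to propagate the bound to spatial infinity \emph{uniformly}, which I would do by passing to the time-dependent formulation. Since $V=\lambda\mathcal KV$, the estimates of Section~\ref{sec:K} transfer to $V$: by \eqref{eq:KV-extra-decay}, $|V(y)|\lesssim\langle y\rangle^{-2}(\M+\|V\|_{X_\alpha})^2$; by \eqref{eq:KV-reg}, $V\in W^{1,s}_{\loc}$ for all $s<\infty$; and by \eqref{eq:P-Ls-1}, $P\in L^s(\R^2)$. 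Viewing \eqref{eq:Ulambda} as a Stokes system $-\Delta U+\nabla P=\frac12 U+\frac12 y\cdot\nabla U-\lambda U\cdot\nabla U$, $\div U=0$, the right-hand side lies in $L^s_{\loc}$, so interior Stokes regularity upgrades $U$ to $W^{2,s}_{\loc}$, hence (iterating, with Schauder once the source is Hölder) to $C^\infty$; in particular $\nabla U\in L^\infty_{\loc}$, with a finite but a priori location-dependent bound.

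The difficulty is uniformity as $|y|\to\infty$: rescaling \eqref{eq:Ulambda} near a far-away point fails because the coefficient of $y\cdot\nabla U$ has size $\sim|y|$. I would circumvent this using $u(x,t)=t^{-1/2}U(x/\sqrt t)$, $p(x,t)=t^{-1}P(x/\sqrt t)$, which solves the ($\lambda$-)Navier--Stokes system and in which that term is absent. From $|U(y)|\lesssim\langle y\rangle^{-1}(\M+\|V\|_{X_\alpha})$ together with $U\in L^\infty$, one checks $\|u\|_{L^\infty(B_1(x_0)\times[1/2,1])}\lesssim\M+\|V\|_{X_\alpha}$ uniformly in $x_0$. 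Using the mild formulation over a short interval $[1-h,1]$, differentiating, and invoking the (2D) heat-kernel bound $\|\nabla e^{\tau\Delta}\mathbb P\|_{L^\infty\to L^\infty}\lesssim\tau^{-1/2}$, one gets
\[
\|\nabla u(\cdot,1)\|_{L^\infty}\lesssim h^{-1/2}\|u(\cdot,1-h)\|_{L^\infty}+\int_{1-h}^1 (1-s)^{-1/2}\|u(\cdot,s)\|_{L^\infty}\|\nabla u(\cdot,s)\|_{L^\infty}\,ds.
\]
Since $\|\nabla u(\cdot,s)\|_{L^\infty}=s^{-1}\|\nabla U\|_{L^\infty}\le 2\|\nabla U\|_{L^\infty}$ for $s\in[1/2,1]$, taking $h=h(\M+\|V\|_{X_\alpha})$ small enough absorbs the last term and yields $\|\nabla U\|_{L^\infty(\R^2)}\lesssim(\M+\|V\|_{X_\alpha})^2$, which is the claim.

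The one genuine subtlety --- which I expect to be the main obstacle, far more than the interior regularity --- is that this Grönwall step presupposes $\|\nabla U\|_{L^\infty(\R^2)}<\infty$, precisely what is being proved. I would close the loop by running the argument in spatially localized form: split the Duhamel kernel at $x_0$ into a part near $x_0$ (controlled by $\|\nabla u\|_{L^\infty}$ on a fixed enlargement of $B_1(x_0)$, i.e. by the local bounds already in hand) and a far part (controlled by $\|u\otimes u\|_{L^2\cap L^\infty}\lesssim(\M+\|V\|_{X_\alpha})^2$ via the Gaussian decay of the kernel); combined with the at-most-polynomial growth of $\|\nabla U\|_{L^\infty(B_R)}$ in $R$ (from iterated interior estimates for \eqref{eq:Ulambda}, whose coefficients grow at most linearly), a finite iteration in $h$ then gives the uniform bound. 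An alternative, entirely within the Leray setting, is to use the vorticity equation $(-\Delta+(\lambda U-\frac y2)\cdot\nabla-1)\Omega=0$ together with a Caccioppoli estimate exploiting the $\langle y\rangle^{-2}$ decay of $V$; but recovering the full gradient rather than only $\Omega$ is cleaner through the time-dependent picture.
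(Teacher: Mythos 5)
Your proposal is correct in outline but takes a genuinely different route from the paper's, and the part you yourself flag as the main obstacle is exactly where the paper's approach is markedly cleaner.

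The paper never leaves the stationary picture. It rewrites \eqref{eq:equationforV} as a Stokes system for $V$ with right-hand side $\div(\tfrac12 y\otimes V-\lambda U\otimes U)-\tfrac12 V$. Putting the growing term $y\cdot\nabla V$ \emph{inside} the divergence is the whole trick: the divergence-form source is $\lesssim\langle y\rangle^{-\alpha}$ by the $X_\alpha$ decay, so local Stokes estimates on unit balls $B_1(y)$ give $\|V\|_{W^{1,s}(B_1(y))}\lesssim\langle y\rangle^{-\alpha}$. Feeding this back into the non-divergence-form source (which now grows like $\langle y\rangle^{1-\alpha}$ because of the explicit $y$), a second local Stokes estimate gives $\|V\|_{W^{2,s}(B_1(y))}\lesssim\langle y\rangle^{1-\alpha}$. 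Interpolating to $W^{1+\theta,s}$ with $\theta<\alpha$ turns growth into decay, and taking $s$ large so that $W^{1+\theta,s}(B_1)\hookrightarrow C^1(\overline{B_{1/2}})$ yields $\|\nabla V\|_{L^\infty}\le C(\alpha,\M,\|V\|_{X_\alpha})$ directly. No circularity, no qualitative-to-quantitative iteration, and no global norm of $\nabla U$ is ever presupposed: each estimate is local and decays in $y$.

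Your approach --- passing to $u(x,t)=t^{-1/2}U(x/\sqrt t)$ and using the mild formulation on $[1-h,1]$ with $\|\nabla e^{\tau\Delta}\mathbb P\|_{L^\infty\to L^\infty}\lesssim\tau^{-1/2}$ --- is a legitimate alternative, and in spirit it anticipates the paper's own Section~\ref{sec:bootstrap}. But for this particular lemma it is the harder road, because (as you correctly identify) the Grönwall step presupposes $\|\nabla U\|_{L^\infty(\R^2)}<\infty$. Your proposed fix (near/far split of Duhamel, far part via $\|u\otimes u\|_{L^2\cap L^\infty}$, and a polynomial-growth-to-bounded iteration) can be made to work, but two points need care that your sketch glosses over. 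First, the Oseen kernel $e^{\tau\Delta}\mathbb P$ has only \emph{algebraic} tails ($\sim|z|^{-2}$ in 2D), not Gaussian; so the far part must indeed be run on $\nabla^2(e^{\tau\Delta}\mathbb P)$ applied to $u\otimes u$ (giving an $|z|^{-4}$ tail, integrable on $|z|\ge 1$ uniformly in $\tau$), and one should not appeal to ``Gaussian decay of the kernel.'' Second, the qualitative closure is an iteration in the spatial radius $R$, not in $h$: setting $g(R)=\sup_{|x_0|\le R}\|\nabla U\|_{L^\infty(B_1(x_0))}$ one gets $g(R)\le A+\epsilon\,g(R+2)$ with $\epsilon<1$ for $h$ small, and the polynomial a priori growth is used to kill $\epsilon^k g(R+2k)$ as $k\to\infty$. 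This is all doable but is a genuine extra layer. The paper's two-step Stokes bootstrap sidesteps the issue entirely by never invoking a global $\nabla U$ norm; I would adopt it here and save the time-dependent machinery for the H\"older gains in Section~\ref{sec:bootstrap}, where it is truly needed.
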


\begin{proof}
	We use a standard bootstrapping argument based on Stokes estimates  (see, for instance, \cite[Theorem IV.4.4 and Remark IV.4.2]{GaldiBook}).	First, we can rewrite \eqref{eq:equationforV} as
	\begin{equation} \label{eq:eq-V-2}
		\left\{\begin{aligned}
		&- \Delta V  + \nabla P =   \div \left( \frac{1}{2} y\otimes V - \lambda U \otimes U \right) - \frac12 V,  \\ 
		&\div V = 0.
		\end{aligned}\right.
	\end{equation}
	By \eqref{eq:U-asymp} and definition of $X_\alpha$, it is clear that
	\begin{equation}
		\left|\frac{1}{2} y\otimes V - \lambda U \otimes U\right| (y) \lesssim  C(\M, \|V\|_{X_\alpha}) \, \langle y \rangle^{-\alpha}.
	\end{equation}
	Then, local Stokes estimate gives, for any $y \in \R^2$ and $s \in (1, +\infty)$,
	\begin{equation} \label{eq:U-W1s}
		\| V\|_{W^{1,s}(B_1(y))} \lesssim_s C(\M, \|V\|_{X_\alpha}) \langle y \rangle^{-\alpha}.
	\end{equation}
	Using \eqref{eq:U-W1s}, we further deduce that
	\begin{equation}
		\left\|\div \left( \frac{1}{2} y\otimes V - \lambda U \otimes U \right) - \frac12 V \right\|_{L^s(B_1(y))} \lesssim_s C(\M, \|V\|_{X_\alpha}) \langle y \rangle^{1-\alpha}.
	\end{equation}
	Applying local Stokes estimate to \eqref{eq:eq-V-2} again, we get
	\begin{equation} \label{eq:U-W2s}
		\| V\|_{W^{2,s}(B_1(y))} \lesssim_s C(\M, \|V\|_{X_\alpha}) \langle y \rangle^{1-\alpha}.
	\end{equation}
	Taking $s$ large (depending on $\alpha$), then interpolation between \eqref{eq:U-W1s} and \eqref{eq:U-W2s} gives 
	\begin{equation}
		\|\nabla V\|_{L^\infty(\R^2)} \lesssim C(\alpha, \M, \|V\|_{X_\alpha}).
	\end{equation}
	Together with \eqref{eq:a-2}, we get the claimed estimate \eqref{eq:nablaU-bound}.
	
\end{proof}

\begin{lemma}[Basic estimate for the Dirichlet integral]\label{b-en-global} There holds
    \begin{equation} \label{eq:energy-global}
        \int_{\R^2} |\nabla U|^2 dy = \frac{1}{4} \int_{S_1} |a_0|^2 d\theta  \leq \frac{\pi}{2} \M^2.
    \end{equation}
\end{lemma}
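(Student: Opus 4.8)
The plan is to test the momentum equation of \eqref{eq:Ulambda} against $\chi_R U$ and let $R\to+\infty$, where $\chi_R(y):=\chi(y/R)$ and $\chi\in C_c^\infty(B_2)$ is a fixed radially non-increasing cutoff with $\chi\equiv 1$ on $B_1$. Integrating by parts (all integrals are over compact sets and $U$ is smooth) and using $\div U=0$ repeatedly, one finds: the viscous term gives $\int\chi_R|\nabla U|^2+\tfrac12\int\nabla\chi_R\cdot\nabla|U|^2=\int\chi_R|\nabla U|^2-\tfrac12\int|U|^2\Delta\chi_R$ (integrating the cross term by parts once more); the scaling term $-\tfrac12(U+y\cdot\nabla U)$ gives $\tfrac14\int(y\cdot\nabla\chi_R)|U|^2$, the two bulk $|U|^2$-contributions having cancelled; the nonlinearity gives $-\tfrac{\lambda}{2}\int(U\cdot\nabla\chi_R)|U|^2$; and $\nabla P$ gives $-\int(U\cdot\nabla\chi_R)P$. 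This produces the master identity
\[
\int_{\R^2}\chi_R|\nabla U|^2\,dy=\frac12\int_{\R^2}|U|^2\Delta\chi_R\,dy-\frac14\int_{\R^2}(y\cdot\nabla\chi_R)|U|^2\,dy+\frac{\lambda}{2}\int_{\R^2}(U\cdot\nabla\chi_R)|U|^2\,dy+\int_{\R^2}(U\cdot\nabla\chi_R)P\,dy.
\]

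To pass to the limit I would first record the (crude) decay inputs. Since $V=\lambda\mathcal{K}V$, the bound \eqref{eq:KV-extra-decay} and Lemma~\ref{lem:a} give $|U(y)|\lesssim\langle y\rangle^{-1}$, and the Calder\'on--Zygmund estimate for $P=\lambda(-\Delta)^{-1}\div\div(U\otimes U)$ gives $P\in L^s(\R^2)$ for all $s\in(1,+\infty)$; the implied constants depend on $\|V\|_{X_\alpha}$, but this is harmless, since these inputs serve only to make error terms vanish. As $\nabla\chi_R$ and $\Delta\chi_R$ are supported in $B_{2R}\setminus B_R$ with $|\nabla\chi_R|\lesssim R^{-1}$ and $|\Delta\chi_R|\lesssim R^{-2}$, the bound $|U|\lesssim\langle y\rangle^{-1}$ alone makes the first three terms on the right $O(R^{-2})$, and H\"older's inequality with $P\in L^{3/2}$ makes the pressure term $O(R^{-4/3})$; all tend to $0$. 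By monotone convergence the left side tends to $\int_{\R^2}|\nabla U|^2\,dy$ (whose finiteness comes out as a byproduct). For the remaining term, write $\chi(y)=\eta(|y|)$ and pass to polar coordinates, rescaling $r=Rs$, to obtain
\[
\frac14\int_{\R^2}(y\cdot\nabla\chi_R)|U|^2\,dy=\frac{R^2}{4}\int_1^2\int_{S_1}\eta'(s)\,s^2\,|U(Rs\,\theta)|^2\,d\theta\,ds;
\]
since \eqref{eq:U-asymp} gives $R^2|U(Rs\,\theta)|^2\to s^{-2}|a_0(\theta)|^2$ uniformly for $s\in[1,2]$ and $\int_1^2\eta'(s)\,ds=\eta(2)-\eta(1)=-1$, this converges to $-\tfrac14\int_{S_1}|a_0|^2\,d\theta$. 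As this term enters the master identity with a minus sign, letting $R\to+\infty$ yields $\int_{\R^2}|\nabla U|^2\,dy=\tfrac14\int_{S_1}|a_0|^2\,d\theta$, and $|a_0|\le\|u_0\|_{L^\infty(S_1)}\le\M$ on $S_1$ then gives the bound $\le\tfrac{\pi}{2}\M^2$.

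The one step that is not a routine tail estimate is the limit of the scaling term: it needs the \emph{exact} leading profile $U(y)=\tfrac{1}{|y|}a_0(\tfrac{y}{|y|})+O(|y|^{-1-\alpha})$, not merely $|U|\lesssim\langle y\rangle^{-1}$; this is precisely where the \emph{identity} (rather than a one-sided estimate) is produced, and the whole value $\tfrac14\int_{S_1}|a_0|^2$ originates from it. I would also emphasize that no decay of $\nabla U$ beyond $\nabla U\in L^\infty(\R^2)$ (as in \eqref{eq:nablaU-bound}) is needed, because the viscous cross term has been moved onto $\Delta\chi_R$ rather than left in the form $\int_{S_R}\partial_r U\cdot U$, which would not obviously vanish under the $L^\infty$ bound alone.
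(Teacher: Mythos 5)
Your proof is correct, and it takes a genuinely different route from the paper's. The paper tests the equation against $U$ on the sharp ball $B_R$, which produces boundary integrals over $S_R$; controlling these requires a two-stage argument — first deducing $\nabla U\in L^2$ with a constant depending on $\|V\|_{X_\alpha}$ (via the $L^\infty$ bound on $\nabla U$ and good-radius choices for $P$), and then a second pass using $\nabla U\in L^2$ and the mean value theorem to select a sequence of radii $\widetilde R_k$ along which \emph{both} the $\int_{S_R}U\,\partial_r U$ term and the pressure boundary term vanish. You instead use a smooth radial cutoff $\chi_R$, which moves every potentially troublesome contribution into bulk integrals weighted by $\nabla\chi_R$ or $\Delta\chi_R$ supported in $B_{2R}\setminus B_R$. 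This has two concrete payoffs: (i) the viscous cross term becomes $-\tfrac12\int|U|^2\Delta\chi_R=O(R^{-2})$ under the pointwise bound $|U|\lesssim\langle y\rangle^{-1}$ alone, so you never invoke the gradient bound \eqref{eq:nablaU-bound}; and (ii) no extraction of special radii is needed, since all error terms vanish for \emph{every} $R\to\infty$ and the left side is handled by monotone convergence. The scaling term is then evaluated exactly by the rescaling $r=Rs$ and the asymptotics \eqref{eq:U-asymp}, and — as you rightly stress — this is the one place where the full leading-order profile of $U$ at infinity (rather than just a one-sided $O(1/|y|)$ bound) is essential, because it produces the identity $\tfrac14\int_{S_1}|a_0|^2$ and not merely an upper bound. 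Your observation that the $\|V\|_{X_\alpha}$-dependence of the decay inputs is harmless (they only force error terms to zero, and the limit is independent of them) correctly mirrors what the paper achieves with its second good-radius pass, but in a single cleaner step.
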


\begin{proof}
     Testing \eqref{eq:Ulambda} by $U$ on the ball $B_R$, and using the fact that
    \begin{equation}
        \int_{B_R} (U+y\cdot \nabla U) \cdot U dy =  \frac{R}{2} \int_{S_R} |U|^2 ds,
    \end{equation}
    we get the local energy identity
    \begin{align} \label{eq:energy-BR}
        \int_{B_R} |\nabla U|^2 dy &=  \int_{S_R} \left( U \partial_r U + \frac{R}{4}  |U|^2 - \frac{\lambda}{2}    |U|^2 U_r - P U_r  \right) ds 
    \end{align}
    Here, we are denoting $\partial_r  = \frac{y}{|y|} \cdot \nabla$ and $U_r = U \cdot \frac{y}{|y|}$.

    By \eqref{eq:U-asymp} and \eqref{eq:nablaU-bound}, we have
    \begin{align} \label{eq:zeroth-bdry-term}
    	\limsup_{R \to +\infty} \int_{S_R} U \partial_r U ds < C(\alpha, \M, \|V\|_{X_\alpha}). 
    \end{align}
    Due to \eqref{eq:U-asymp}, there holds
    \begin{equation} \label{eq:first-bdry-term}
        \lim_{R \to +\infty} \int_{S_R} \frac{R}{4} |U|^2 ds = \frac{1}{4} \int_{S_1} |a_0|^2 d\theta \le \frac{\pi}{2} \M^2,
    \end{equation}
    and 
    \begin{equation} \label{eq:second-bdry-term}
        \lim_{R \to +\infty} \int_{S_R} |U|^2 U_r ds = 0.
    \end{equation}
   Since $P \in L^2(\R^2)$ (see \eqref{eq:P-Ls-1}), by the mean value theorem there exist a sequence of radii $\{R_k\}_{k=1,2,3,\cdots}$ such that $R_k \to +\infty$ and
    \begin{equation}
         R_k{\int_{S_{R_k}} |P|^2 ds} \to 0
    \end{equation}
    as $k \to +\infty$. Consequently, there holds
    \begin{align} \label{eq:third-bdry-term}
        \left|\int_{S_{R_k}} P U_r ds\right| \lesssim \frac{1}{R_k} \int_{S_{R_k}} |P| ds 
        \lesssim \frac{1}{\sqrt{R_k}} \left(\int_{S_{R_k}} |P|^2 ds \right)^\frac12 \to 0
    \end{align}
    as $R_k \to +\infty$. Hence, by taking $R = R_k$ with $k \to +\infty$ in \eqref{eq:energy-BR} and using \eqref{eq:zeroth-bdry-term}, \eqref{eq:first-bdry-term}, \eqref{eq:second-bdry-term}, \eqref{eq:third-bdry-term}, we obtain 
    \begin{equation} \label{eq:Dirichlet-finite}
    	\int_{\R^2} |\nabla U|^2 dy \le C(\alpha, \M, \|V\|_{X_\alpha}).
    \end{equation}
    
    Next, we remove the dependence on $\|V\|_{X_\alpha}$ in \eqref{eq:Dirichlet-finite}. Since we now know that $|\nabla U| + |P| \in L^2(\R^2)$, similar to  \eqref{eq:third-bdry-term}, there exist  a sequence of radii $\{\widetilde{R}_k\}_{k=1,2,3,\cdots}$ going to infinity such that
    \begin{equation} \label{eq:two-bdry-terms}
    	 \left|\int_{S_{\widetilde{R}_k}} P U_r ds\right| + \left|\int_{S_{\widetilde{R}_k}} U \partial_r U ds\right| \to 0.
    \end{equation}
    Taking $R = \widetilde{R}_k$ with $k \to +\infty$ in \eqref{eq:energy-BR} and using \eqref{eq:first-bdry-term}, \eqref{eq:second-bdry-term}, \eqref{eq:two-bdry-terms}, we get the desired estimate \eqref{eq:energy-global}.
    
\end{proof}

\begin{lemma}[Basic bounds on the pressure]
    \label{lem:pressurebounds}
     The pressure $P$ satisfies
     \begin{equation} \label{eq:Pbound}
    \| \nabla^2 P \|_{L^1(\R^2)} + \| \nabla P \|_{L^2(\R^2)} + \| P \|_{L^\infty(\R^2)} \lesssim \M^2,
    \end{equation}
    and
    \begin{equation} \label{eq:P-asymp}
    	\lim_{y \to \infty} P(y) = 0.
    \end{equation}
\end{lemma}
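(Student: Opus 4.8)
The plan is to extract pointwise control of $P$ from the Poisson equation it satisfies, exploiting the compensated--compactness (Jacobian) structure of the right-hand side. From \eqref{eq:P-def} together with $\div U = 0$ --- all manipulations being classical, since $U$ and $P$ are locally smooth --- one has
\[
-\Delta P \;=\; \lambda\,\partial_i\partial_j(U_iU_j)\;=\;\lambda\,\partial_iU_j\,\partial_jU_i\;=\;-2\lambda\det\nabla U \qquad\text{in }\R^2 .
\]
By Lemma~\ref{b-en-global} we have $\nabla U\in L^2(\R^2)$, i.e.\ $U\in\dot W^{1,2}(\R^2;\R^2)$. Hence the div--curl lemma of Coifman--Lions--Meyer--Semmes places the Jacobian in the real Hardy space,
\[
\|\det\nabla U\|_{\mathcal{H}^1(\R^2)}\;\lesssim\;\|\nabla U\|_{L^2(\R^2)}^2\;\lesssim\;\M^2 ,
\]
so that, writing $h:=-\Delta P$ and using $\lambda\le 1$, we obtain $\|h\|_{\mathcal{H}^1(\R^2)}\lesssim\M^2$. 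This single $\mathcal{H}^1$ bound is the only quantity of ``size'' in the argument; everything else follows from it by deterministic harmonic analysis.

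Granting that $P$ coincides with the Newton potential $(-\Delta)^{-1}h$ (rather than differing from it by a harmonic polynomial), the estimates follow from mapping properties of $(-\Delta)^{-1}$ on $\mathcal{H}^1(\R^2)$. For the sup bound, one writes $P(y)=\tfrac{1}{2\pi}\langle h,\log|y-\cdot|\rangle$ and uses that $\log|\cdot|\in\BMO(\R^2)$ with norm independent of translations --- and that the $\mathcal{H}^1$--$\BMO$ pairing only detects a $\BMO$ function modulo constants --- so that $\mathcal{H}^1$--$\BMO$ duality gives $\|P\|_{L^\infty}\lesssim\|h\|_{\mathcal{H}^1}\lesssim\M^2$. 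For $\nabla^2 P$, the second derivatives of the Newton potential form a matrix of Calder\'on--Zygmund (double Riesz) operators, bounded $\mathcal{H}^1\to\mathcal{H}^1\hookrightarrow L^1$; moreover $\nabla^2 P=R_iR_jh$ with no ambiguity, since the potential discrepancy would be a harmonic polynomial lying in $L^s(\R^2)$ for some $s<\infty$ by \eqref{eq:P-Ls-1} and Calder\'on--Zygmund estimates, hence vanishing; thus $\|\nabla^2 P\|_{L^1}\lesssim\|\nabla^2 P\|_{\mathcal{H}^1}\lesssim\|h\|_{\mathcal{H}^1}\lesssim\M^2$. For $\nabla P$, one combines $\nabla^2 P\in L^1(\R^2)$ with the two-dimensional Gagliardo--Nirenberg--Sobolev inequality $\|\nabla P\|_{L^2(\R^2)}\lesssim\|\nabla^2 P\|_{L^1(\R^2)}$, applicable because $\nabla P$ is a priori known to decay (by \eqref{eq:P-Ls-1} and interior Stokes estimates). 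Finally, $\lim_{|y|\to\infty}P(y)=0$ follows either from the representation \eqref{eq:P-def} and the decay $|U\otimes U|\lesssim\langle y\rangle^{-2}$ coming from \eqref{eq:U-asymp}, or from the fact that $(-\Delta)^{-1}$ maps $\mathcal{H}^1(\R^2)$ into $C_0(\R^2)$ --- immediate for finite combinations of atoms and extended by density through the uniform bound $\|(-\Delta)^{-1}g\|_{L^\infty}\lesssim\|g\|_{\mathcal{H}^1}$.

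The substance of the lemma is not in any individual estimate --- each is classical once $h\in\mathcal{H}^1$ is known --- but in the realization that the borderline energy bound $\nabla U\in L^2(\R^2)$ of Lemma~\ref{b-en-global}, which in two dimensions is far too weak to control $U$ itself (it even permits logarithmic growth), is nevertheless exactly enough to pin down the pressure pointwise, precisely because the Jacobian cancellation upgrades a product of two $L^2$ functions into an $\mathcal{H}^1$ function. The main technical nuisance is the two-dimensional low-frequency behaviour of $(-\Delta)^{-1}$: one must legitimately identify the a priori pressure of \eqref{eq:P-def} with the Newton potential of $h$ before invoking the Hardy-space machinery, and this is where the a priori decay of $U$ from \eqref{eq:U-asymp} (together with the fact that a nonzero harmonic polynomial lies in no $L^s(\R^2)$) does the work.
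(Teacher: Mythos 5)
Your proof is correct and takes essentially the same route as the paper's: both pass through the div--curl (Coifman--Lions--Meyer--Semmes) lemma to place $-\Delta P$ in $\mathcal{H}^1(\R^2)$ with norm $\lesssim\M^2$, then invoke Calder\'on--Zygmund theory on the Hardy space for $\nabla^2 P$ and Sobolev-type arguments for the remaining bounds. The paper compresses the last step into a citation (``2D Sobolev embedding,'' referring to \cite{MazyaBook} and \cite[Theorem~5.13]{BFbook2002}), whereas you unpack it: you obtain $\|P\|_{L^\infty}$ directly from $\mathcal{H}^1$--$\BMO$ duality against $\log|y-\cdot|$, obtain $\|\nabla P\|_{L^2}$ from $\|\nabla^2 P\|_{L^1}$ via the 2D Gagliardo--Nirenberg endpoint, and you explicitly address the normalization issue --- that the pressure of \eqref{eq:P-def} agrees with the Newton potential of $-\Delta P$ modulo a harmonic polynomial which is ruled out by the a priori $L^s$ integrability \eqref{eq:P-Ls-1}. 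That last point is glossed over in the paper and is a genuine (if routine) detail worth making explicit; otherwise the two arguments are the same.
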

    
\begin{proof}
    By \eqref{eq:energy-global} and the classical div-curl lemma \cite{CoifmanLionsMeyerSemmes}, we have
    \begin{equation}
         \div \div (U\otimes U) = \sum_{i,j =1}^2 \p_i U_j \partial_{j} U_{i} \in \mathcal{H}^1(\R^2),
    \end{equation}
    where $\mathcal{H}^1$ denotes the Hardy space, and there holds
    \begin{equation}
        \|\div   \div   (U\otimes U)\|_{\mathcal{H}^1(\R^2)} \lesssim \int_{\R^2} |\nabla U|^2 dy \lesssim \M^2.
    \end{equation}
    Hence, by the Calder\'on-Zygmund theorem for Hardy spaces \cite{SteinBook}, there holds
     \begin{equation}
     	\| \nabla^2 P \|_{L^1(\R^2)} + \| \nabla P \|_{L^2(\R^2)} \lesssim \M^2.
     \end{equation}
     Further, the remaining claims of the lemma follow from the last estimate and 2D Sobolev embedding \cite{MazyaBook}. We also refer to \cite[Theorem 5.13]{BFbook2002} for a self-contained proof of the relevant estimates.
\end{proof}

\begin{lemma}[Local upper bound near the origin] \label{lem:local}
	There holds
	\begin{equation} \label{eq:local-bound}
		\max_{B_{10}} |U(y)| \le C(\M).
	\end{equation}
\end{lemma}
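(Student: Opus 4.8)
\textbf{Proof plan for Lemma~\ref{lem:local}.}

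The plan is to use the self-similar Bernoulli pressure $\Phi := \frac{1}{2}|U|^2 - \frac{y}{2}\cdot U + P$ together with the maximum principle, following the strategy outlined in the introduction. First I would record the PDE satisfied by $\Phi$: a direct computation using \eqref{eq:Ulambda} gives
\begin{equation*}
    \Big(-\Delta + \tfrac{\lambda}{2} U \cdot \nabla - \tfrac{1}{2} y \cdot \nabla\Big)\Phi = -\lambda |\Omega|^2 + (\text{lower-order terms from the }\lambda\text{-scaling}),
\end{equation*}
where $\Omega = \nabla^\perp \cdot U$; the key structural point is that the right-hand side is nonpositive (or can be arranged to be, after absorbing the zeroth-order contributions into the operator with the correct sign), so $\Phi$ is a subsolution of a uniformly elliptic operator on any bounded domain, with a zeroth-order coefficient whose sign is favorable for the maximum principle. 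I would verify the sign bookkeeping carefully here, since the operator $-\frac12 y\cdot\nabla$ and the $\lambda$-dependent drift $\frac{\lambda}{2}U\cdot\nabla$ are both first-order and harmless, but any surviving zeroth-order term must have the right sign.

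Next I would apply the maximum principle on large balls $B_R$ to conclude $\sup_{B_R}\Phi \le \sup_{S_R}\Phi$, and then pass to the limit: by the far-field asymptotics \eqref{eq:U-asymp} and Lemma~\ref{lem:pressurebounds} (which gives $P\to 0$ and $\|P\|_{L^\infty}\lesssim\M^2$), one has $\limsup_{R\to\infty}\sup_{S_R}\Phi \le C(\M)$, since on $S_R$ we have $|U|\lesssim \langle R\rangle^{-1}\M$, hence $\frac12|U|^2\to 0$ and $|\frac{y}{2}\cdot U|\lesssim \M$ stays bounded. Therefore $\Phi(y)\le C(\M)$ for all $y\in\R^2$. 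Rearranging, $\frac12|U(y)|^2 \le C(\M) + \frac{|y|}{2}|U(y)| - P(y) \le C(\M) + \frac{|y|}{2}|U(y)|$ on $B_{10}$ using $\|P\|_{L^\infty}\lesssim\M^2$; on the bounded set $B_{10}$ the term $\frac{|y|}{2}|U(y)| \le 5|U(y)|$, so we get a quadratic inequality $\frac12 t^2 \le C(\M) + 5t$ for $t = |U(y)|$, which forces $|U(y)|\le C(\M)$. This yields \eqref{eq:local-bound}.

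The main obstacle I anticipate is justifying the maximum principle rigorously: $\Phi$ is only known a priori to be smooth and to satisfy mild growth bounds (via $V\in X_\alpha$ one has $|U|\lesssim \langle y\rangle^{-1}$ and $P$ bounded, so actually $\Phi$ is bounded — this is a relief and removes any Phragmén–Lindelöf subtlety), but one must ensure the zeroth-order term in the elliptic operator does not have the wrong sign. Writing out $(-\Delta + \frac\lambda2 U\cdot\nabla - \frac12 y\cdot\nabla)\Phi$ explicitly: expanding $-\Delta\Phi$ produces $-U\cdot\Delta U - |\nabla U|^2 + \frac12\nabla\cdot U + \frac{y}{2}\cdot\Delta U$ (wait — $\nabla\cdot U = 0$), and then substituting $\Delta U = -\frac12(U+y\cdot\nabla U) + \lambda U\cdot\nabla U + \nabla P$ from the momentum equation, the $\nabla P$ and drift terms should combine with the first-order terms in the operator to leave exactly $-|\nabla U|^2 + |\nabla U|^2 - |\Omega|^2 = -|\Omega|^2$ type cancellations (using $|\nabla U|^2 = |\Omega|^2$ for divergence-free fields in 2D up to a null-Lagrangian), modulo the $\lambda$ bookkeeping. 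If the clean identity $(-\Delta + \frac\lambda2 U\cdot\nabla - \frac12 y\cdot\nabla)\Phi = -\lambda|\Omega|^2 \le 0$ holds with no zeroth-order term at all, the maximum principle applies immediately on bounded domains and the rest is routine. I would devote the bulk of the write-up to this computation and to confirming the boundary-term limits.
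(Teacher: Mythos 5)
Your first half is exactly the paper's: you introduce the self-similar Bernoulli pressure, derive that it is a subsolution of the drift-Laplace operator, apply the maximum principle, and pass to infinity to get $\sup_{\R^2}\Phi\le C(\M)$. The PDE identity you anticipate does close cleanly, but only with the $\lambda$-weighted version $\Phi=\frac{\lambda}{2}|U|^2-\frac{y}{2}\cdot U+P$, which satisfies $-\Delta\Phi+(\lambda U-\frac{y}{2})\cdot\nabla\Phi=-\lambda\Omega^2\le 0$; keep track of that $\lambda$.

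The gap is in the second half, and it is fatal for the Leray--Schauder application. From $\Phi\le C(\M)$ and $\|P\|_{L^\infty}\lesssim\M^2$ you extract a quadratic inequality $\frac{\lambda}{2}|U(y)|^2\le C(\M)+5|U(y)|$ on $B_{10}$, whose conclusion is $|U(y)|\lesssim C(\M)/\lambda$. This is \emph{not} uniform in $\lambda\in[0,1]$, and a $\lambda$-uniform bound is precisely what Proposition~\ref{prop:fullaprioriestimate} requires. (Your write-up drops the $\lambda$ in front of $\frac12|U|^2$, which hides the degeneration, but if you include it the constant blows up as $\lambda\to0$.) The paper sidesteps this by discarding the $\lambda$-weighted quadratic term entirely: since $\frac{\lambda}{2}|U|^2\ge 0$, the maximum principle gives the $\lambda$-free bound $\sup_{\R^2}\bigl(-\frac{y}{2}\cdot U\bigr)\lesssim\M^2+\M$. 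Of course this only controls one signed radial component of $U$, not $|U|$. To recover $|U|$, the paper uses the global Dirichlet bound of Lemma~\ref{b-en-global} and a mean-value/Sobolev argument to find a circle $S_{r_*}$ ($r_*\in(1,2)$) on which the oscillation of $U$ is $\lesssim\M$; then, assuming $|U(y_*)|$ is huge at some $y_*\in S_{r_*}$, all of $U|_{S_{r_*}}$ nearly equals $U(y_*)$, so choosing the point $y_{**}=-r_*U(y_*)/|U(y_*)|\in S_{r_*}$ makes $-\frac{y_{**}}{2}\cdot U(y_{**})$ comparable to $|U(y_*)|$, contradicting the $\lambda$-free bound. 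This circle-plus-antipodal-point step is the missing ingredient; once $\max_{S_{r_*}}|U|\lesssim\M^2+\M$ is established, the remaining extension to $B_{10}$ (Sobolev embedding plus local Stokes regularity) is routine.
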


\begin{proof}
	Denote
	\begin{equation}
		\Phi = \frac{\lambda}{2} |U|^2 - \frac{y}{2} \cdot U + P
	\end{equation}
	for the Bernoulli pressure associated with \eqref{eq:Ulambda}, and denote
	\begin{equation}
		\Omega = \partial_{y_2} U_1 - \partial_{y_1} U_2
	\end{equation}
	for the vorticity function.   From \eqref{eq:Ulambda}, one can derive
	\begin{equation}
		-\Delta \Phi + (\lambda U -  \frac{y}{2}) \cdot \nabla \Phi = - \lambda \Omega^2 \le 0,
	\end{equation}
	which implies the one-sided maximum principle for $\Phi$, namely, for any $R>0$,
	\begin{equation} \label{eq:Phi-max}
		\max_{B_R} \Phi \le \max_{S_R} \Phi.
	\end{equation}
	Note that 
	\begin{equation}
		\limsup_{|y| \to \infty} |\Phi| \lesssim \M
	\end{equation} 
	holds due to the asymptotic behavior of $U$ and $P$ at infinity,  see \eqref{eq:U-asymp} and \eqref{eq:P-asymp}. Taking $R \to \infty$ in \eqref{eq:Phi-max}, we receive the pointwise upper bound on $\Phi$ given by
	\begin{equation}
		\max_{\R^2} \Phi \lesssim \M.
	\end{equation}
	Further, due to $\frac{\lambda}{2}|U|^2 \ge 0$ and \eqref{eq:Pbound}, we obtain
	\begin{equation} \label{eq:yU-upper}
		\max_{\R^2} (-\frac{y}{2}) \cdot U \le \max_{\R^2} |P| + \max_{\R^2} \Phi \lesssim \M^2+\M.
	\end{equation}

	Using \eqref{eq:energy-global}, the mean value theorem for integrals and Sobolev embedding on the circle, we can find a \textit{good circle} $S_{r_*}$ with $r_* \in (1,2)$ such that
	\begin{equation} \label{eq:osc-U-good-circle}
		\max_{y \in S_{r_*}} |U(y) - \bar{U}(r_*)| \lesssim \left(\int_{S_{r_*}} |\nabla U|^2 ds\right)^\frac12 \lesssim \M,
	\end{equation}
	here $\bar{U}$ being the average of $U$ over circles:
	\begin{equation} \label{eq:Ubar-def}
		\bar{U}(r) = \frac{1}{2\pi r} \int_{S_r} U ds.
	\end{equation}
	Suppose there exists a point $y_* \in S_{r_*}$ such that
	\begin{equation} \label{eq:assumption-U-large}
		|U(y_*)| \ge M (\M^2 + \M),
	\end{equation}
	where $M$ is a sufficiently large absolute constant determined by the arguments to follow. By \eqref{eq:osc-U-good-circle} and choosing $M$ large, for any $y \in S_{r_*}$ we have
	\begin{equation}
		|U(y) - U(y_*)| \le \frac{1}{10} |U(y_*)|, \quad |U(y)| \le 2 |U(y_*)|,
	\end{equation}
	and consequently
	\begin{equation} \label{eq:key-lower-bound}
		\frac{(U(y)-U(y_{*})) \cdot U(y)}{|U(y_*)|} \ge -\frac{1}{5} |U(y_*)|.
	\end{equation}
	Let 
	\begin{equation}
		y_{**} = -r_* \frac{U(y_*)}{|U(y_*)|} \in S_{r_*},
	\end{equation}
	then combining \eqref{eq:key-lower-bound} and \eqref{eq:assumption-U-large}, we deduce
	\begin{align}
		-\frac{y_{**}}{2} \cdot U(y_{**}) 
		&= \frac{r_*}{2} \left(\frac{(U(y_{**})-U(y_{*})) \cdot U(y_{*})}{|U(y_*)|} + |U(y_*)| \right) \nonumber \\
		&\ge \frac{r_*}{2} \left(-\frac{1}{5} |U(y_*)| + |U(y_*)|\right) \nonumber \\
		&\ge \frac{2 M}{5} (\M^2 + \M). 
	\end{align}
	By choosing $M$ large, the last estimate contradicts with \eqref{eq:yU-upper}. Hence, we have shown that \eqref{eq:assumption-U-large} is impossible, in other words, the pointwise estimate	    
	\begin{equation} \label{eq:good-rstar}
		\max_{S_{r_*}} |U| \lesssim \M^2 + \M
	\end{equation}
	 must hold.

	By \eqref{eq:energy-global}, \eqref{eq:good-rstar} and Sobolev embedding, we have
	\begin{equation}
		\|U\|_{L^N(B_{20})} \lesssim_N |\bar{U}(r_*)| + 	\|\nabla U\|_{L^2(B_{20})} \lesssim \M^2 + \M 
	\end{equation}
	for any $N< +\infty$. Hence,   for $1<s<2$,
	\begin{equation}
		\|U\cdot \nabla U\|_{L^s(B_{20})} \le \|U\|_{L^{\frac{2s}{2-s}}(B_{20})} \|\nabla U\|_{L^2(B_{20})}  \lesssim_s \M^3 + \M.
	\end{equation}
	By equation \eqref{eq:Ulambda} and local Stokes estimates, we obtain
	\begin{align}
		\|U\|_{W^{2,s}(B_{10})} &\lesssim_s \|U\|_{W^{1,s}(B_{20})} + \|U\cdot \nabla U\|_{L^s(B_{20})} + \| y \cdot \nabla U\|_{L^s(B_{20})} \nonumber \\ &\lesssim_s \M^3 + \M.
	\end{align}
	The last estimate implies \eqref{eq:local-bound} via Sobolev embedding.
	
\end{proof}

\begin{lemma}\label{vort-est}
There holds
		        \begin{equation} \label{eq:Delta-U-sq}
			\int_{\R^2} |\Delta U|^2 dx \lesssim \M^2.
		\end{equation}
\end{lemma}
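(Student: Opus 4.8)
The plan is to derive \eqref{eq:Delta-U-sq} from the basic Dirichlet bound of Lemma~\ref{b-en-global} through the vorticity equation, using a cut-off that avoids any control of $\nabla\Omega$ at spatial infinity. Write $\Omega = \partial_{y_2}U_1-\partial_{y_1}U_2$. Since $\div U = 0$ one has the pointwise identities $\Delta U_1 = \partial_{y_2}\Omega$ and $\Delta U_2 = -\partial_{y_1}\Omega$, hence $|\Delta U|^2 = |\nabla\Omega|^2$ everywhere, and it suffices to bound $\int_{\R^2}|\nabla\Omega|^2$. Moreover $|\Omega|\le\sqrt2\,|\nabla U|$, so Lemma~\ref{b-en-global} already gives $\Omega\in L^2(\R^2)$ with $\|\Omega\|_{L^2(\R^2)}^2\le 2\int_{\R^2}|\nabla U|^2\lesssim\M^2$; this $L^2$-bound is essentially the only quantitative input.

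Taking the curl of the momentum equation in \eqref{eq:Ulambda}, and using $\div U = 0$, one finds that $\Omega$ solves --- classically, since $U$ and hence $\Omega$ is locally smooth --- the scalar equation
\begin{equation}
    -\Delta\Omega - \tfrac12\,y\cdot\nabla\Omega - \Omega + \lambda\,U\cdot\nabla\Omega = 0 ,
\end{equation}
equivalently, in divergence form, $-\Delta\Omega = \div\!\bigl((\tfrac{y}{2}-\lambda U)\Omega\bigr)$, since $\div(\tfrac{y}{2}) = 1$ and $\div U = 0$.

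Next I would fix a smooth cut-off $\chi_R$ with $\chi_R\equiv 1$ on $B_R$, $\supp\chi_R\subset B_{2R}$, $|\nabla\chi_R|\lesssim R^{-1}$, test the divergence-form equation against $\Omega\chi_R^2$, and integrate by parts. The diffusion term yields $\int|\nabla\Omega|^2\chi_R^2$ plus the commutator $2\int\chi_R\Omega\,\nabla\Omega\cdot\nabla\chi_R$, while the right-hand side, after integrating $\nabla(\Omega\chi_R^2)$ by parts once more and using $\div(\tfrac{y}{2}-\lambda U) = 1$, produces the \emph{favourable} bulk term $+\tfrac12\int\Omega^2\chi_R^2$ --- this is precisely where the scaling operator $\tfrac12(1+y\cdot\nabla)$ works in our favour --- together with cut-off errors supported in the annulus $B_{2R}\setminus B_R$. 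Collecting,
\begin{equation}
\begin{aligned}
    \int_{\R^2}|\nabla\Omega|^2\chi_R^2 &= \frac12\int_{\R^2}\Omega^2\chi_R^2 - 2\int\chi_R\Omega\,\nabla\Omega\cdot\nabla\chi_R \\
    &\quad - \frac12\int\bigl(\tfrac{y}{2}-\lambda U\bigr)\cdot\nabla(\chi_R^2)\,\Omega^2 .
\end{aligned}
\end{equation}
On $\supp\nabla\chi_R$ one has $|y|\lesssim R$ and $|U|\lesssim 1$ (by $V\in X_\alpha$ and Lemma~\ref{lem:a}), hence $\bigl|(\tfrac{y}{2}-\lambda U)\cdot\nabla(\chi_R^2)\bigr|\lesssim 1$, so the last integral is $\lesssim\int_{B_{2R}\setminus B_R}\Omega^2\to 0$ as $R\to\infty$; the commutator is $\le(\int|\nabla\Omega|^2\chi_R^2)^{1/2}\cdot R^{-1}(\int_{B_{2R}\setminus B_R}\Omega^2)^{1/2}$, and since $\tfrac12\int\Omega^2\chi_R^2\le\tfrac12\|\Omega\|_{L^2(\R^2)}^2\lesssim\M^2$ it is absorbed into the left-hand side by Young's inequality. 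This gives $\int_{\R^2}|\nabla\Omega|^2\chi_R^2\le\|\Omega\|_{L^2(\R^2)}^2 + o(1)\lesssim\M^2$ uniformly in $R$, and letting $R\to\infty$ yields $\int_{\R^2}|\nabla\Omega|^2\lesssim\M^2$, which is \eqref{eq:Delta-U-sq}.

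I expect the only genuine subtlety to be the sign bookkeeping in the last display: the scaling term must strictly beat the zeroth-order term $-\Omega$, leaving a \emph{net} $+\tfrac12\int\Omega^2$ on the right --- a negative multiple of $\int\Omega^2$ would make the identity vacuous --- and one cannot simply test against $\Omega$ over all of $\R^2$, because a priori no quantitative decay of $\nabla\Omega$ is available; the cut-off, together with the self-absorbing treatment of the commutator, is what lets the mere $L^2$-bound on $\Omega$ close the estimate. Alternatively, one could run the bare identity on balls $B_{R_k}$ along radii where the boundary terms vanish, after first upgrading $\nabla\Omega$ to $L^2(\R^2)$ by interior elliptic estimates for the divergence-form equation; the cut-off packages both steps at once.
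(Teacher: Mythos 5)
Your proof is correct and rests on the same core identity as the paper's: testing the vorticity equation $-\Delta\Omega-\Omega-\tfrac{y}{2}\cdot\nabla\Omega+\lambda U\cdot\nabla\Omega=0$ against $\Omega$ and exploiting $\div(\tfrac{y}{2})=1$, $\div U=0$ to arrive at (formally) $\int|\nabla\Omega|^2=\tfrac12\int\Omega^2$, then invoking Lemma~\ref{b-en-global} to bound $\|\Omega\|_{L^2}^2\lesssim\M^2$. Where you genuinely diverge from the paper is in how the cut-off errors are handled, and your route is cleaner. The paper tests over hard balls $B_R$, which produces a boundary term $\int_{S_R}\partial_r\Omega\,\Omega\,ds$ that is not controlled a priori; to kill it they rewrite the boundary integral in terms of $J(r)=r\int_{S_r}\Omega^2\,ds$ and $J'(r)$, and then run a three-case mean-value argument to manufacture radii $\widetilde\rho_k\to\infty$ along which both $J\to 0$ and $\max\{J',0\}\to 0$. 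Your smooth cut-off $\chi_R^2$ replaces that boundary term with the commutator $2\int\chi_R\Omega\,\nabla\Omega\cdot\nabla\chi_R$, which you absorb into the left-hand side by Young once you observe that $\int\Omega^2|\nabla\chi_R|^2\lesssim R^{-2}\int_{B_{2R}\setminus B_R}\Omega^2\to 0$; the remaining annular error $\int(\tfrac{y}{2}-\lambda U)\cdot\nabla\chi_R^2\,\Omega^2$ is $O(\int_{B_{2R}\setminus B_R}\Omega^2)\to0$ because the linear growth of $\tfrac{y}{2}$ is exactly cancelled by $|\nabla\chi_R^2|\lesssim R^{-1}$. Both arguments need only $\Omega\in L^2$, and you recover the same constant $\int|\nabla\Omega|^2\le\tfrac12\int\Omega^2$. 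One small caveat in your closing aside: ``upgrading $\nabla\Omega$ to $L^2(\R^2)$ by interior elliptic estimates'' before choosing good radii is not actually available, since the coefficient $\tfrac{y}{2}$ grows and interior estimates are only local — this is precisely the issue the paper's $J,J'$ device circumvents, and your cut-off argument avoids it more transparently.
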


\begin{proof}
	Taking the curl of equation \eqref{eq:Ulambda}, we obtain the associated vorticity equation
	\begin{equation} \label{eq:vorticity}
		-\Delta \Omega -  \Omega - \frac{y}{2} \cdot \nabla \Omega + \lambda U \cdot \nabla \Omega = 0.
	\end{equation}
	Testing \eqref{eq:vorticity} with $\Omega$ in the ball $B_R$ gives
	\begin{equation} \label{eq:vorticity-energy-BR}
		\int_{B_R} |\nabla \Omega|^2 dy = \frac12 \int_{B_R} \Omega^2 dy + \int_{S_R} \left( \partial_r \Omega + \frac14 |y| \Omega - \frac{\lambda}{2} U_r \Omega  \right)  \Omega \,  ds.  
	\end{equation}
	We would like to prove the qualitative decay of the boundary integral in \eqref{eq:vorticity-energy-BR} along some sequence of $R \to +\infty$. First, we rewrite it as
	\begin{align} \label{eq:rewrite-bdry-int}
		&\quad \ \int_{S_R} \left( \partial_r \Omega + \frac14 |y| \Omega - \frac{\lambda}{2} U_r \Omega  \right)  \Omega  ds \nonumber \\
		&= \frac{1}{2R} \left[\frac{d}{dr} \left(r \int_{S_r} \Omega^2 ds \right)\right]\bigg|_{r=R} + \left( \frac{R}{4} -  \frac{1}{R} \right) \int_{S_R} \Omega^2 ds -\frac{\lambda}{2} \int U_r \Omega^2 ds.
	\end{align} 
	For simplicity, denote 
	\begin{equation}
		J(r) = r \int_{S_r} \Omega^2 ds.
	\end{equation}
	and $J'$ for its derivative in $r$. By \eqref{eq:Ulambda}, we can take $R_0>10$ (the quantitative size of $R_0$ is not important)  such that
	\begin{equation} \label{eq:outside-R0} 
		\sup_{\R^2 \setminus B_{R_0}} |U| \le 1.
	\end{equation}
	For $R > R_0$,  by \eqref{eq:rewrite-bdry-int} and \eqref{eq:outside-R0}, we have
	\begin{equation} \label{eq:upper-A}
		\int_{S_R} \left( \partial_r \Omega + \frac14 |y| \Omega - \frac{\lambda}{2} U_r \Omega  \right)  \Omega  ds \le \frac{1}{2R} J'(R) + \frac{1}{8} J(R).
	\end{equation}
	   By \eqref{eq:energy-global}, we have
	\begin{equation} \label{eq:omega-sq}
		\int_{\R^2} \Omega^2 dy \lesssim \M^2 < +\infty,
	\end{equation} 
	hence by the mean value theorem for integrals there exists a sequence of radii $\{\rho_k\}_{k=1,2,3,\dots}$ such that $\rho_k \in (2^{2k}, 2^{2k+1})$ and
	\begin{equation}
		J(\rho_k) \lesssim \int_{B_{2^{2k+1}} \setminus B_{2^{2k}}} \Omega^2 dy,
	\end{equation}
	which converges to $0$ as $k \to +\infty$. For each $k$, we define $\widetilde{\rho}_k \in [\rho_k, \rho_{k+1}]$ according to the following three cases:
	\begin{enumerate}
		\item If $J'(\rho_{k+1}) \le 0$, we simply let $\widetilde{\rho}_k =\rho_{k+1}$.
		\item If $J'(\rho_{k+1}) > 0$ and the set $\mathcal{C} = \{r \in [\rho_k, \rho_{k+1}] : J'(r)=0\}$ is non-empty, then we let $\widetilde{\rho}_k = \sup \, \mathcal{C}$.
		\item If $J'(r) > 0$ for all $r \in [\rho_k, \rho_{k+1}]$, then we choose $\widetilde{\rho}_{k}$ such that $J'(\widetilde{\rho}_{k}) = (\rho_{k+1} - \rho_k)^{-1}(J(\rho_{k+1})-J(\rho_k)),$
		whose existence is guaranteed by the mean value theorem. 
	\end{enumerate}
	By construction it can be verified that, as $k \to +\infty$,
	\begin{equation} \label{eq:A-vanishing}
		J(\widetilde{\rho}_k) \le J(\rho_{k+1}) \to 0, \quad  \max \{J'(\widetilde{\rho}_k), 0\} \to 0.
	\end{equation}
	Hence, based on \eqref{eq:upper-A}, \eqref{eq:omega-sq} and \eqref{eq:A-vanishing}, and by taking $R = \widetilde{\rho}_{k+1}$ in \eqref{eq:vorticity-energy-BR} with $k \to +\infty$, we get
	\begin{equation} \label{eq:omega-energy}
		\int_{\R^2} |\nabla \Omega|^2 dy \le \frac12 \int_{\R^2} \Omega^2 dy  \lesssim \M^2.
	\end{equation}
	Since $-\Delta U = \nabla^\perp \Omega$ with $\nabla^\perp := (-\partial_2, \partial_1)$, \eqref{eq:omega-energy} is equivalent to the desired estimate \eqref{eq:Delta-U-sq}.
	
\end{proof}

Now, we are ready to present:

\begin{proof}[Proof of Proposition \ref{prop:elliptic}]
   
    By Sobolev embedding and scaling,  for any $R \ge 2$ and $N \ge 1$ we know that
    \begin{align} \label{eq:scaled-Sobolev}
        R^{-\frac{2}{N}}\left( \int_{B_R} |U|^{N} dy \right)^\frac{1}{N} &\lesssim_N \left(\int_{B_R} |\nabla U|^2 dy\right)^\frac12 + \bar{U}(R), 
    \end{align}
    with $\bar{U}$ defined in \eqref{eq:Ubar-def}.   By H\"older's inequality and \eqref{eq:energy-global}, we know that, for $R > 2$,
    \begin{align} \label{eq:UR-Urstar}
        |\bar{U}(R) - \bar{U}(2)| &\le \frac{1}{2\pi}  \int_{2}^R \int_0^{2\pi} |\partial_r  U| d\theta dr \nonumber \\
        &\lesssim \left( \int_{2}^R r^{-1} dr \right)^\frac12 \left( \int_{\R^2} |\nabla U|^2 dy \right)^\frac12  \nonumber \\
        &\lesssim  (\ln R)^\frac12 \M.
    \end{align}
    In view of Lemma \ref{lem:local}, from \eqref{eq:UR-Urstar} we further deduce
    \begin{equation} \label{eq:UR-bound}
        |\bar{U}(R)| \lesssim C(\M) \, (\ln R)^\frac12, \quad \forall R>2.
    \end{equation}
    Then, combining \eqref{eq:scaled-Sobolev}, \eqref{eq:UR-bound} and \eqref{eq:energy-global} we obtain
    \begin{equation} \label{eq:BR-UN}
        \int_{B_R} (R^{-\frac{2}{N}} |U|)^{N} dy \lesssim_N (C(\M))^N (\ln R)^{\frac{N}{2}}. 
    \end{equation}
    Hence, for any  $\gamma>0$ there holds
    \begin{align} \label{eq:weighted-U-N}
        \int_{\R^2 \setminus B_{1}} \left(|y|^{-\frac{2}{N} - \gamma} |U|\right)^N dy &= \sum_{k=0}^{+\infty} \int_{B_{2^{k+1}} \setminus B_{2^k}} \left(|y|^{-\frac{2}{N} - \gamma} |U|\right)^N dy \nonumber \\
        &\lesssim_{N} (C(\M))^N \sum_{k=0}^{+\infty} 2^{-\gamma N k} (k+1)^{\frac{N}{2}}   \nonumber  \\
        &\lesssim_{\gamma, N} (C(\M))^N.
    \end{align}
    Applying H\"older's inequality and using \eqref{eq:weighted-U-N}, \eqref{eq:energy-global}, we get
    \begin{align} \label{eq:mu-N-bound}
        &\quad \ \int_{\R^2 \setminus B_{1}} |y|^{-\frac{4+2\gamma N}{2+N}}|U \cdot \nabla U|^{\frac{2N}{2+N}} dy \nonumber \\
        &\lesssim \left( \int_{\R^2 \setminus B_1} \left(|y|^{-\frac{2}{N} - \gamma} |U|\right)^N dy \right)^{\frac{2}{2+N}}  \left( \int_{\R^2 \setminus B_1} |\nabla U|^2 dy \right)^{\frac{N}{2+N}} \nonumber \\
        &\lesssim_{\gamma, N}  (C(\M))^{\frac{N}{2+N}} \lesssim  C(\M).
    \end{align}
    Making the change of parameters
    \begin{equation}
        \varepsilon_1 = \frac{4}{2+N}, \quad \varepsilon_2 = \frac{4+2\gamma N}{2+N},
    \end{equation}
    we deduce from \eqref{eq:mu-N-bound} that, for $\varepsilon_1 \in (0,1)$ and $\varepsilon_2>\varepsilon_1$, 
    \begin{equation} \label{eq:UnablaU-weighted}
        \int_{\R^2 \setminus B_{1}} |y|^{-\varepsilon_2}|U \cdot \nabla U|^{2-\varepsilon_1} dy \lesssim_{ \varepsilon_1, \varepsilon_2} C(\M).
    \end{equation}
    Also, notice that for such $\varepsilon_1$ and $\varepsilon_2$, there holds
    \begin{align} \label{eq:DeltaU-weighted}
        \int_{\R^2 \setminus B_{1}} |y|^{-\varepsilon_2} |\Delta U|^{2-\varepsilon_1} dy &\lesssim \left(\int_{\R^2 \setminus B_1} |y|^{-\frac{2\varepsilon_2}{\varepsilon_1} } dy\right)^{\frac{\varepsilon_1}{2}} \left(\int_{\R^2 \setminus B_1} |\Delta U|^2 dy \right)^{\frac{2-\varepsilon_1}{2}} \nonumber\\ 
        &\lesssim_{\varepsilon_1, \varepsilon_2}  C(\M)
    \end{align}
    by \eqref{eq:Delta-U-sq}. Similarly, by \eqref{eq:Pbound} there holds
     \begin{align} \label{eq:nablaP-weighted}
        \int_{\R^2 \setminus B_{1}} |y|^{-\varepsilon_2} |\nabla P|^{2-\varepsilon_1} dy \lesssim_{\varepsilon_1, \varepsilon_2}   C(\M).
    \end{align}
    Combining \eqref{eq:Ulambda} and \eqref{eq:UnablaU-weighted}--\eqref{eq:nablaP-weighted}, we arrive at
    \begin{align} \label{eq:key-weighted}
        &\quad \ \int_{\R^2 \setminus B_{1}} |y|^{-\varepsilon_2} |U + y \cdot \nabla U|^{2-\varepsilon_1} dy \nonumber \\
        &\lesssim \int_{\R^2 \setminus B_{1}} |y|^{-\varepsilon_2} \left(  |U \cdot \nabla U|^{2-\varepsilon_1} + |\Delta U|^{2-\varepsilon_1} + |\nabla P|^{2-\varepsilon_1}  \right) dy \nonumber \\
        &\lesssim_{\varepsilon_1, \varepsilon_2}  C(\M).
    \end{align}

    Now, it is convenient to work with polar coordinates $r = |y|$ and $\theta \in [0, 2\pi)$. Noticing that $U+y\cdot \nabla U = \partial_r (r U)$ and using \eqref{eq:local-bound}, \eqref{eq:key-weighted}, for $\rho>2$ we have
    \begin{align} 
        \rho |U(\rho, \theta)| &\le 2 |U(2, \theta)|  +  \int_{2}^{\rho} |U + y \cdot \nabla U| dr \nonumber \\
        &\le 2 |U(2, \theta)| + \left( \int_{2}^{\rho} r^{\frac{\varepsilon_2-1}{1-\varepsilon_1}} dr \right)^{\frac{1-\varepsilon_1}{2-\varepsilon_1}} \left(\int_{2}^{\rho}  r^{1-\varepsilon_2} |U + y \cdot \nabla U|^{2-\varepsilon_1} dr\right)^{\frac{1}{2-\varepsilon_1}}  \nonumber \\
        &\lesssim_{\varepsilon_1, \varepsilon_2} C(\M) + \rho^{\frac{\varepsilon_2-\varepsilon_1}{2-\varepsilon_1}} \left(\int_{2}^{\rho}  r^{1-\varepsilon_2} |U + y \cdot \nabla U|^{2-\varepsilon_1} dr\right)^{\frac{1}{2-\varepsilon_1}}. \nonumber
    \end{align}
    As a consequence, there holds
    \begin{equation} \label{eq:rhoU}
    	\sup_{r >2} r^{2-\varepsilon_2} |U(r, \theta)|^{2-\varepsilon_1} \lesssim_{\varepsilon_1, \varepsilon_2} C(\M) + \int_{2}^{+\infty}  r^{1-\varepsilon_2} |U + y \cdot \nabla U|^{2-\varepsilon_1} dr.
    \end{equation}
    Integrating \eqref{eq:rhoU} in $\theta$ and using \eqref{eq:key-weighted}, we obtain 
    \begin{align} 
    	\int_0^{2\pi}  \sup_{r>2} r^{2-\varepsilon_2}  |U(r, \theta)|^{2-\varepsilon_1} d\theta \lesssim_{\varepsilon_1, \varepsilon_2} C(\M),
    \end{align}
    which further implies
\begin{align} \label{eq:weighted-U-int}
	\int_{\R^2\setminus B_{2}} r^{-2\varepsilon_2} |U|^{2-\varepsilon_1} dy &\lesssim  \left(\int_{2}^{+\infty} r^{-1-\varepsilon_2} dr \right) \left(  \int_0^{2\pi}  \sup_{r>2} r^{2-\varepsilon_2}  |U(r, \theta)|^{2-\varepsilon_1} d\theta \right) \nonumber \\
	&\lesssim_{\varepsilon_1, \varepsilon_2} C(\M). 
\end{align} 
Combining \eqref{eq:weighted-U-int} with the local bound in Lemma \ref{lem:local} gives the desired estimate for $U$ in \eqref{eq:key-weighted-UP}.

Next, we prove the weighted estimate for pressure. A direct application of H\"older's inequality together with \eqref{eq:weighted-U-int} and \eqref{eq:weighted-U-N} gives
\begin{align} \label{eq:UU-weighted}
    \int_{\R^2 \setminus B_2} r^{-\delta_2} |U \otimes U|^{2-\delta_1} dy &\le \left( \int_{\R^2 \setminus B_2} r^{-2\varepsilon_2} |U|^{2-\varepsilon_1}  dy \right)^\frac{M-2}{M} \left(\int_{\R^2 \setminus B_2} r^{-2-{\nu M}} |U|^M dy \right)^{\frac{2}{M}} \nonumber \\
    &\lesssim_{\varepsilon_1, \varepsilon_2, \nu, M} C(\M),    
\end{align}
where we are taking $\delta_1, \delta_2>0$ as
\begin{equation} \label{eq:delta12}
	\delta_1 = \frac{\varepsilon_1}{2} + \frac{ 2-\varepsilon_1 }{M}, \quad \delta_2 = 2(\varepsilon_2+\nu)+ \frac{ 4-4\varepsilon_2}{M}.
\end{equation}
with arbitrary $M>10$ and $0<\nu<1$ (independent of $\varepsilon_1, \varepsilon_2$). Due to Lemma \ref{lem:local}, if $\delta_2$ is sufficiently small we can enhance  \eqref{eq:UU-weighted} to
\begin{eqnarray}
	 \int_{\R^2} r^{-\delta_2} |U \otimes U|^{2-\delta_1} dy \lesssim_{\varepsilon_1, \varepsilon_2, \nu, M} C(\M).
\end{eqnarray}
Since $r^{-\delta_2}$ is an $A_{2-\delta_1}$-weight if $\delta_1, \delta_2$ are sufficiently small (see, for instance, \cite[Example 7.1.7]{GrafakosBook}), from the definition \eqref{eq:P-def} we deduce that
\begin{equation} \label{eq:weighted-int-P}
     \int_{\R^2} r^{-\delta_2} |P|^{2-\delta_1} dy \le C(\M).
\end{equation}
In view of \eqref{eq:delta12} and the requirement $\varepsilon_2>\varepsilon_1$, we point out that the allowed range of $(\delta_1, \delta_2)$ for \eqref{eq:weighted-int-P} to hold contains the set 
\begin{equation}
    \left\{(\delta_1, \delta_2) \in \R^2 : 0<\delta_1<\frac{1}{10}, \  4 \delta_1 <\delta_2<\frac12\right\}.
\end{equation}
This finishes the proof of the desired estimate for $P$ in \eqref{eq:key-weighted-UP}.

\end{proof}

\section{Bootstrapping via parabolic estimates} \label{sec:bootstrap}

In this section, we prove Proposition \ref{prop:fullaprioriestimate} using a bootstrapping procedure based on parabolic regularity theory and  the quantitative weighted estimates for $U, P$ obtained in Proposition \ref{prop:elliptic}.

\begin{proof}[Proof of Proposition \ref{prop:fullaprioriestimate}]
As before, we write $U = a + V$ which is a solution to \eqref{eq:Ulambda}. We separate the proof into a few successive steps.

    \smallskip
    \textbf{Step 1. Transforming to the non-stationary problem.}
    \smallskip

From $U, P$, we define
\begin{equation} \label{eq:up-def}
    u(x,t) = \frac{1}{\sqrt{t}} U\left(\frac{x}{\sqrt{t}}\right), \quad p(x,t) = \frac{1}{t} P\left(\frac{x}{\sqrt{t}}\right),
\end{equation}
then $(u,p)$ solves the non-stationary Navier--Stokes equations with coefficient $\lambda$, namely, 
\begin{equation} \label{eq:u-NS}
    \left\{\begin{aligned}
        &\partial_t u +  \lambda u \cdot \nabla u + \nabla p = \Delta u, \\
        &\nabla \cdot u = 0.
    \end{aligned}
    \right.
\end{equation}
By the local smoothness of $U$ and $P$, the time-dependent solution $(u, p)$ is  smooth for any positive time. By \eqref{eq:U-asymp}, the initial datum of $u$ is given by
\begin{equation} \label{eq:init-datum}
    u(x,0) = \frac{1}{|x|} a_0\left(\frac{x}{|x|}\right),
\end{equation}
which is achieved in the sense that, for any compact set $K \subset \R^2\setminus \{0\}$,
\begin{equation} \label{eq:init-conv}
    \sup_{x \in K} \sup_{0<t<1} t^{-\frac{\alpha}{2}}|u(x,t)-u(x,0)| < + \infty.
\end{equation}
Our goal is to obtain a quantitative version of \eqref{eq:init-conv} with the help of the available bounds on $\||y|^{-{\gamma_2}} P\|_{L^{2-{\gamma_1}}}$ and $\||y|^{-{\gamma_2}} U\|_{L^{2-{\gamma_1}}}$.

  By construction and Proposition \ref{prop:elliptic}, for any $t>0$ we have
\begin{equation}
    \left\|\left(\frac{|x|}{\sqrt{t}}\right)^{-{\gamma_2}} u(x,t)\right\|_{L^{2-{\gamma_1}}(\R^2)} = t^{\frac{{\gamma_1}}{4-2{\gamma_1}} }   \||y|^{-\gamma_2} U(y)\|_{L^{2-\gamma_1}(\R^2)} \le C(\M) \, t^{\frac{{\gamma_1}}{4-2{\gamma_1}} },
\end{equation}
and
\begin{align}
    \left\|\left(\frac{|x|}{\sqrt{t}}\right)^{-{\gamma_2}} p(x,t)\right\|_{L^{2-{\gamma_1}}(\R^2)} &=  t^{-\frac12 + \frac{{\gamma_1}}{4-2{\gamma_1}} }  \||y|^{-\gamma_2} P(y)\|_{L^{2-\gamma_1}(\R^2)} \nonumber \\
    &\le C(\M) \, t^{-\frac12 + \frac{{\gamma_1}}{4-2{\gamma_1}} }.
\end{align}
As a consequence, for the annulus domains $\A_R := B_R \setminus \overline{B_{\frac{1}{R}}} \subset \R^2$ with $R>1$, we obtain
\begin{equation} \label{eq:u-MU}
    \| u(x, t)\|_{L^{2-{\gamma_1}}(\A_R)} \les_{R} C(\M)  t^{ - \frac{{\gamma_2}}{2} + \frac{{\gamma_1}}{4-2{\gamma_1}}} ,
\end{equation}
and
\begin{equation} \label{eq:p-MP}
    \| p(x, t)\|_{L^{2-{\gamma_1}}(\A_R)} \les_{R} C(\M) t^{-\frac12 - \frac{{\gamma_2}}{2} + \frac{{\gamma_1}}{4-2{\gamma_1}}}.
\end{equation}

\smallskip
\textbf{Step 2. Propagating local-in-space-time energy estimates.}
\smallskip

Next, we perform a local energy estimate for $u$ inside the annulus $\A_5$ near the initial time. Let  $\varphi$ be a smooth cut-off function in $\R^2$ satisfying: (1) $\varphi(x) = 0$ for any $x \in \R^2 \setminus \A_5$; (2)  $\varphi(x)=1$ for any $x \in \A_4$; (3) $|\nabla \varphi(x)|  \lesssim |\varphi(x)|^{\frac{9}{10}}$ for any $x \in \R^2$. Testing the first equation of \eqref{eq:u-NS} with $u\varphi^2$, we obtain a local energy inequality given by
\begin{align} \label{eq:local-energy-est}
    &\quad \   \frac{d}{dt} \left(\int_{\R^2} |u|^2 \varphi^2 dx \right) +  2\int_{\R^2} |\nabla (u\varphi)|^2  dx \nonumber \\
    &=   \int_{\R^2} \left( 2 \lambda |u|^2 \varphi u \cdot \nabla \varphi  + 4 \varphi p u \cdot \nabla \varphi   + 2 |u|^2 |\nabla \varphi|^2  \right) dx.
\end{align}
By Sobolev embedding, for any $1\le N<+\infty$, we have
\begin{equation} \label{eq:sob}
    \|u\varphi\|_{L^N(\R^2)} \lesssim_N \left(\int_{\R^2} |\nabla (u \varphi)|^2 dx\right)^\frac12.
\end{equation}
Let us treat the right-hand side of \eqref{eq:local-energy-est} term by term. By the property (3) of $\varphi$, H\"older's inequality, \eqref{eq:u-MU} and \eqref{eq:sob}, we have
\begin{align} \label{eq:822-1}
    \left|\int_{\R^2} 2 \lambda |u|^2 \varphi u \cdot \nabla \varphi dx \right| &\lesssim \int_{\R^2}  |u|^3 \varphi^{\frac{19}{10}} dx \lesssim \int_{\R^2}  |u|^\frac32 |u \varphi|^\frac32  dx  \nonumber \\
    &\lesssim \left\|u\right\|^{\frac{3}{2}}_{L^{\frac{15}{8}}(\A_5)} \left\|u \varphi\right\|^{\frac{3}{2}}_{L^{\frac{15}{2}}(\R^2)}  \lesssim \left\|u\right\|^{\frac{3}{2}}_{L^{2-\gamma_1}(\A_5)} \left\|u \varphi\right\|^{\frac{3}{2}}_{L^{\frac{15}{2}}(\R^2)} \nonumber \\
    &\le C(\M) \, t^{-\frac{{3\gamma_2}}{4} + \frac{{3\gamma_1}}{8-4{\gamma_1}}} \left(\int_{\R^2} |\nabla (u \varphi)|^2 dx\right)^{\frac{3}{4}}.  
\end{align}
Hence, there holds
\begin{equation} \label{eq:nice-est-1}
	\left|\int_{\R^2} 2 \lambda |u|^2 \varphi u \cdot \nabla \varphi dx \right| - \frac{1}{3} \int_{\R^2} |\nabla (u \varphi)|^2 dx \le C(\M) \, t^{-3\gamma_2 + \frac{{3\gamma_1}}{2-{\gamma_1}}}.
\end{equation}
Applying H\"older's inequality, \eqref{eq:p-MP} and
 Ladyzhenskaya's inequality 
\begin{equation*}
    \|f\|_{L^4(\R^2)} \lesssim \|f\|_{L^2(\R^2)}^\frac12 \|\nabla f\|_{L^2(\R^2)}^\frac12, \quad \forall f \in C_0^\infty(\R^2),
\end{equation*}
we obtain
\begin{align}
    \left|\int 4\varphi p u \cdot \nabla \varphi dx \right|  &\lesssim \|p\|_{L^{\frac43}(\A_5)} \|u\varphi\|_{L^4(\R^2)} \lesssim \|p\|_{L^{2-\gamma_1}(\A_5)} \|u\varphi\|_{L^4(\R^2)} \nonumber \\
    &\le C(\M) \, t^{-\frac{1}{2}-\frac{\gamma_2}{2} + \frac{{\gamma_1}}{4-2{\gamma_1}}}  \left(\int_{\R^2} |u|^2 \varphi^2 dx\right)^{\frac14} \left(\int_{\R^2} |\nabla (u \varphi)|^2 dx\right)^{\frac14}. \nonumber 
\end{align}
Hence, there holds
\begin{equation} \label{eq:nice-est-2}
	\left|\int 4\varphi p u \cdot \nabla \varphi dx \right| - \frac{1}{3} \int_{\R^2} |\nabla (u \varphi)|^2 dx \le C(\M) \,  t^{-\frac{2}{3}-\frac{2\gamma_2}{3} + \frac{2\gamma_1}{6-3\gamma_1}} \left(\int_{\R^2} |u|^2\varphi^2 dx\right)^\frac13.
\end{equation}
For the last term in \eqref{eq:local-energy-est}, similar to \eqref{eq:822-1} we have
\begin{align}
     \int_{\R^2} 2|u|^2 |\nabla \varphi|^2 dx & \lesssim \int_{\R^2} |u|^2 |\varphi|^{\frac{9}{5}} dx \nonumber \\
    & \lesssim \|u\|_{L^1(\A_5)}^{\frac{1}{5}} \|u \varphi\|_{L^{\frac{9}{4}}(\R^2)}^{\frac{9}{5}} \lesssim \|u\|_{L^{2-\gamma_1}(\A_5)}^{\frac{1}{5}} \|u \varphi\|_{L^{\frac{9}{4}}(\R^2)}^{\frac{9}{5}}  \nonumber \\ 
    & \le C(\M) \, t^{ - \frac{{\gamma_2}}{10} + \frac{{\gamma_1}}{20-10{\gamma_1}}} \left(  \int_{\R^2} |\nabla (u \varphi)|^2 dx\right)^{\frac{9}{10}},
\end{align}
and consequently,
\begin{equation} \label{eq:nice-est-3}
	\int_{\R^2} 2|u|^2 |\nabla \varphi|^2 dx - \frac{1}{3} \int_{\R^2} |\nabla (u \varphi)|^2 dx \le C(\M) \,  t^{ -\gamma_2 + \frac{{\gamma_1}}{2-\gamma_1}}.
\end{equation}
Combining \eqref{eq:local-energy-est}, \eqref{eq:nice-est-1}, \eqref{eq:nice-est-2} and \eqref{eq:nice-est-3}, for $t \in (0,1)$ we get
\begin{align}
    &\quad \frac{d}{dt} \left(\int_{\R^2} |u|^2\varphi^2 dx\right) + \int_{\R^2} |\nabla (u \varphi)|^2 dx \nonumber \\
    &\le C(\M) \, t^{-\frac34} \left[\left( \int_{\R^2} |u|^2\varphi^2 dx\right)^\frac13 +1 \right].
\end{align}
Solving the above differential inequality together with the initial time bound
\begin{equation}
     \lim_{t \to 0+} \int_{\R^2} |u(x, t)|^2\varphi(x)^2 dx = \int_{\R^2} |u(x, 0)|^2\varphi(x)^2 dx \le C(\M),
\end{equation}
we arrive at the quantitative local energy bound
\begin{equation}
    \sup_{0\le t < 1} \int_{\R^2} |u|^2\varphi^2 dx + \int_0^1 \int_{\R^2} |\nabla (u \varphi)|^2 dx dt \le C(\M). 
\end{equation}

\smallskip
\textbf{Step 3. Regularity estimates near the initial time.}
\smallskip

Since the initial datum $u_0 = \frac{a_0\left(\frac{y}{|y|}\right)}{|y|} \in C^\alpha(\R^2 \setminus \{0\})$, similarly to \cite{Jiasver2013} we can decompose $u_0 = u_0^{(1)} + u_0^{(2)}$ with $\div u_0^{(1)} = 0$, $u_0^{(1)}|_{\A_{4}} = u_0$, $\supp u_0^{(1)} \subset \A_5$ and $\|u_0^{(1)}\|_{C^\alpha(\R^2)} \lesssim \M$.  Consider the local-in-time mild solution $v$ on a  time interval $[0,T]$ with $T = T(\alpha, \M) \in (0,1]$ to \eqref{eq:u-NS} with  initial datum given by $u_0^{(1)}$, namely,
\begin{equation} \label{eq:v-NS}
    \left\{\begin{aligned}
        &\partial_t v +  \lambda v \cdot \nabla v + \nabla q = \Delta v, \\
        &\nabla \cdot v = 0, \\
        &v(x, 0) =  u_0^{(1)}.
    \end{aligned}
    \right.
\end{equation}
The classical construction of $v$ (see, for instance, \cite{fujitakato, kato}) treats the nonlinear term $\lambda v \cdot \nabla v$ as a small perturbation to the linear Stokes system, hence $T$ can be chosen uniformly in $\lambda \in [0,1]$. Moreover, it is easy to show that $v$ enjoys the energy estimate
\begin{equation}
	 \sup_{0\le t \le T} \int_{\R^2} |v(x,t)|^2 dx + \int_0^{T} \int_{\R^2} |\nabla v|^2(x,t) dx dt  \le C(\M),
\end{equation}
and the regularity estimates
\begin{equation} \label{eq:vq-reg}
	\|v\|_{C^\alpha_{\text{par}}(\R^2 \times [0, T])}+ \|q\|_{L^\infty(\R^2\times [0, T])}  \le C(\alpha, \M).
\end{equation}

Let $u = v + w$, then $w$ satisfies the perturbed Navier--Stokes equations
\begin{equation} \label{eq:w}
    \left\{\begin{aligned}
            &\partial_t w -\Delta w + \lambda w \cdot \nabla w + \lambda v \cdot \nabla w + \lambda w \cdot \nabla v + \nabla \widetilde{p} = 0,    \\
            &\nabla \cdot w = 0
    \end{aligned}\right.
\end{equation}
in  $\R^2 \times [0, T]$, with $\widetilde{p} = p - q$ being the difference between the associated pressures for $u$ and $v$. Note also that the initial datum of $w$ enjoys the vanishing property
\begin{equation} \label{eq:w-init}
    w(x,0)|_{\A_4} = \big(u_0-u_0^{(1)}\big)|_{\A_4} \equiv 0.
\end{equation}
By the local energy estimate for $u$ obtained in Step 2 as well as the global-in-space energy estimate of $v$, we have
\begin{align}
    \sup_{0\le t \le T} \int_{\A_4} |w(x,t)|^2 dx + \int_0^{T} \int_{\A_4} |\nabla w|^2(x,t) dx dt  \le C(\M).
\end{align}
Standard interpolation gives
\begin{equation}
     \int_0^{T} \int_{\A_3} |w(x,s)|^4 dxds \le C(\M),
\end{equation}
and consequently, for $t\in [0,T]$,
\begin{equation}
    \int_0^{t} \int_{\A_3} |w(x,s)|^3 dxds \le  C(\M) \, t^{\frac14}.
\end{equation}
By \eqref{eq:p-MP} and \eqref{eq:vq-reg}, for $t \in [0, T]$ we have
\begin{align}
    \|\widetilde{p}(\cdot, t)\|_{L^{2-{\gamma_1}}(\A_3)} &\le \|p(\cdot, t)\|_{L^{2-{\gamma_1}}(\A_3)} + \|q(\cdot, t)\|_{L^{2-{\gamma_1}}(\A_3)} \nonumber \\
    &\le  C(\alpha, \M) \, t^{-\frac12 - \frac{{\gamma_2}}{2} },
\end{align}
and consequently,
\begin{equation}
    \int_0^{t} \int_{\A_3} |\widetilde{p}(x,s)|^{\frac32} dxds \le C(\alpha, \M) \, t^{\frac14 - \frac{3\gamma_2}{4} }.
\end{equation}
We extend the functions $w, \widetilde{p}, v$ by constant value $0$ to $\A_3 \times [-1+T, 0)$, then using \eqref{eq:w-init} one can verify that $w, \widetilde{p}$ is a suitable weak solution to the equations \eqref{eq:w} in the domain $\A_3 \times [-1+T, T]$.  Then, applying the $\varepsilon$-regularity result of \cite{jiasverakselfsim}, we deduce that $w \in C^\beta_{\text{par}}(\A_{2} \times (-\frac12, s])$ with some universal H\"older exponent $\beta>0$  for a sufficiently small $s_0 = s_0(\alpha, \M) \in (0, T]$, and there holds
\begin{equation}
    \|w\|_{C_{\text{par}}^\beta(\A_{2} \times (-\frac12, s_0])} \le C(\alpha, \M).
\end{equation}
We remark that the $\varepsilon$-regularity result of \cite{jiasverakselfsim} is for three-dimensional solutions, which include our two-dimensional solutions as a special case. Moreover, the additional parameter $\lambda \in [0,1]$ here is not harmful since the argument is perturbative.  By the regularity of $v$, we further deduce that
 \begin{equation} \label{eq:holder-1}
    \|u\|_{C_{\text{par}}^{\min\{\alpha,\beta\}}(\A_{2} \times [0, s_0])} \le C(\alpha, \M).
\end{equation}

\smallskip
\textbf{Step 4. Estimates in self-similar coordinates and the final bootstrap.}
\smallskip

By \eqref{eq:holder-1} and the definition of $u$, we have for $x \in \A_{2}$, $0 < t \le s_0$,
\begin{align}
   \biggl| \frac{1}{\sqrt{t}} U\left(\frac{x}{\sqrt{t}}\right) - \frac{1}{|x|} a_0\left(\frac{x}{|x|}\right)\biggr| \le C(\alpha, \M).
\end{align}
Letting $x = \frac{y}{|y|} \in S_1$ and $t = |y|^{-2}$, the above estimate implies that
\begin{align}
     \biggl| |y| U(y) -  a_0\left(\frac{y}{|y|}\right) \biggr| \le C(\alpha, \M) 
\end{align}
for $|y| \ge s_0^{-\frac12}$. Together with~\eqref{eq:local-bound}, we get
\begin{equation}\label{eq-cor-1}
    \bigl|U(y)\bigr| \lesssim C(\alpha, \M) \langle y \rangle^{-1} 
\end{equation}
for any $y \in \R^2$. By the Calder\'on-Zygmund theorem, we deduce
\begin{eqnarray}
    \|P\|_{L^{1+\varepsilon}(\R^2)} \lesssim_{\varepsilon} C( \alpha, \M)
\end{eqnarray}
for any $\varepsilon>0$.

Next, via a bootstrapping step we can improve the H\"older exponent in \eqref{eq:holder-1} to match the regularity of the initial data. (If $\beta \ge \alpha$,  this step is unnecessary.) By the definition of $p$, for any $\varepsilon>0$ and $0<t \le 1$ we have
\begin{equation} \label{eq:pL1+e}
    \|p(x,t)\|_{L^{1+\varepsilon}(\R^2)} = t^{-\frac{\varepsilon}{1+\varepsilon}}\|P\|_{L^{1+\varepsilon}(\R^2)} \le t^{-\varepsilon} C(\alpha, \M).
\end{equation}
Using  $\Delta p = -\lambda \div (u \cdot \nabla u)$, \eqref{eq:holder-1} and \eqref{eq:pL1+e}, there holds
\begin{equation} \label{pLinfty}
    \|p(x,t)\|_{L^{\infty}(\A_{\frac32})} \lesssim_{\varepsilon} t^{-\varepsilon} C(\alpha, \M)
\end{equation}
for $0< t \le 1$. Define 
\begin{equation}
    \widetilde{u} (x,t) = e^{t\Delta} u_0^{(1)} (x) - \int_0^t e^{(t-s)\Delta} (\lambda \phi u \cdot \nabla u + \phi \nabla p) (x,s) ds
\end{equation}
where $\phi$ is a smooth spatial cut-off function supported in $\A_{\frac32}$ with $\phi|_{\A_{\frac43}} = 1$, and let $\hat{u} = u-\widetilde{u}$. Then, by \eqref{eq:holder-1}, \eqref{pLinfty} and basic estimates for the heat kernel, we obtain
\begin{eqnarray}
    \|\widetilde{u}\|_{C_{\text{par}}^{\alpha}(\A_{\frac43} \times [0, 1])} \lesssim C(\alpha, \M).
\end{eqnarray}
Moreover, $\hat{u}$ solves the homogeneous heat equation $\partial_t \hat{u} = \Delta \hat{u}$ in $\A_{\frac43} \times (0,1]$ achieving zero initial datum at $\A_{\frac43} \times \{0\}$ and uniformly bounded at $\bigl(\partial \A_{\frac43}\bigr)\times[0,1]$ (see~(\ref{eq-cor-1})\,), hence it is locally smooth and, in particular,
\begin{equation}
    \|\hat{u}\|_{C_{\text{par}}^{\alpha}(\A_{\frac54} \times [0, 1])} \lesssim C(\alpha, \M).
\end{equation}
In conclusion, we have obtained the optimal local H\"older regularity for $u$, namely,
\begin{equation} \label{eq:u-Holder}
	\|u\|_{C_{\text{par}}^{\alpha}(\A_{\frac54} \times [0, 1])} \le C(\alpha, \M).
\end{equation}

Finally, going back to self-similar coordinates again, we get for $x \in \A_{\frac54}$, $0 < t \le 1$,
\begin{align}
    \biggl|\frac{1}{\sqrt{t}} U\left(\frac{x}{\sqrt{t}}\right) - \frac{1}{|x|} a_0\left(\frac{x}{|x|}\right)\biggr| \le C(\alpha, \M) t^{\frac{\alpha}{2}},
\end{align}
which then implies that, for $|y| \ge 1$,
\begin{align}
    \biggl|   |y| U(y) -  a_0\left(\frac{y}{|y|}\right)  \biggr| \le C(\alpha, \M) |y|^{-\alpha}.
\end{align}
Together with \eqref{eq:a-u0}, this proves the desired estimate \eqref{eq:fullaprioriestimate}.
\end{proof}

\section{Numerical observations}
\label{sec:numericalobservations}

In this section, we explain the numerical methods used to find Observations~\ref{numerics}.
The Leray equation~\eqref{eq:LerayEquations} admits the $\mathbb{Z}_{2}$-symmetry
$\mathcal{R}$ defined by the reflection with respect to the horizontal
line $x_{2}=0$.
We made the following more detailed numerical observations:

\begin{result}\label{numerics-detailed}
	For the $-1$--homogeneous initial velocity field,
	\begin{equation}
		u_{0}(y) = -\frac{\sigma y_{1}y}{|y|^{3}},
	\end{equation}
	we numerically observe the following:
	\begin{enumerate}
		\item In the range $\sigma\in[0,80]$, there exists a $\mathcal{R}$-symmetric
		solution $U_{\sigma}$ of \eqref{eq:LerayEquations} satisfying \eqref{eq:LerayEquationsBC}.
		\item There exists $\sigma_{0}\approx39.2$ such that the linearized operator
		around $U_{\sigma}$ has a one-dimensional kernel.
		\item At $\sigma=\sigma_{0}$, there is a supercritical pitchfork-type bifurcation
		corresponding to the breaking of the symmetry with respect to the
		horizontal axis. More precisely, for $\sigma\in[\sigma_{0},500]$,
		there exists two solutions $U_{\sigma}+V_{\sigma}$ and $U_{\sigma}+\mathcal{R}V_{\sigma}$
		of \eqref{eq:LerayEquations} satisfying \eqref{eq:LerayEquationsBC}, where $V_{\sigma}=0$
		at $\sigma=\sigma_{0}$ and $V_{\sigma}$ is not $\mathcal{R}$-symmetric
		(hence non trivial) for $\sigma>\sigma_{0}$.
	\end{enumerate}
\end{result}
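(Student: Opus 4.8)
The plan is to establish the three items numerically, closely following the 3D strategy of~\cite{guillodsverak}, with the implementation described in the remainder of this section. First I would reduce the Leray system~\eqref{eq:LerayEquations} to a single scalar equation by introducing the stream function $\Psi$ with $U=\nabla^\perp\Psi$, which removes both the pressure and the divergence constraint. For the datum $u_0(y)=-\sigma y_1 y/|y|^3$ one checks in polar coordinates $(r,\theta)$ that $u_0$ is purely radial, $u_0=-\sigma r^{-1}\cos\theta\,(\cos\theta,\sin\theta)$, with $0$-homogeneous stream function $-\sigma\sin\theta$; writing $\Psi=\Psi_{\mathrm{far}}+\tilde\Psi$, where $\Psi_{\mathrm{far}}$ is a fixed smooth function equal to $-\sigma\sin\theta$ for large $r$ and regular at the origin (equivalently, peeling off $a=e^{\Delta}u_0$ as in the analytic part), one obtains for $\tilde\Psi$ a scalar semilinear elliptic equation with homogeneous far-field condition, in which the transport term $y\cdot\nabla_y$ appears explicitly.

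Next I would discretize with a Fourier expansion in $\theta$ (using periodicity, the reflection $\mathcal{R}\colon\theta\mapsto-\theta$ selecting a definite parity of the modes, so that the $\mathcal{R}$-symmetric subspace is built into the truncation) and, in $r$, a change of variables sending $[0,\infty]$ to a bounded interval together with Chebyshev-type collocation, which allows the decay condition~\eqref{eq:LerayEquationsBC} to be imposed exactly at the outermost node. This produces a finite-dimensional nonlinear system $F_\sigma(\mathbf{u})=0$. Starting from the trivial solution at $\sigma=0$ and using Newton's method together with pseudo-arclength continuation \emph{within the $\mathcal{R}$-symmetric subspace}, one traces the branch $\sigma\mapsto U_\sigma$ over $\sigma\in[0,80]$, which is item~(1). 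Along this branch one computes the eigenvalues of the linearization $DF_\sigma(U_\sigma)$ restricted to the $\mathcal{R}$-antisymmetric subspace; locating a simple zero crossing at $\sigma_0\approx39.2$ and checking that the kernel is one-dimensional is item~(2).

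For item~(3), one uses the null eigenvector at $\sigma_0$ to branch-switch and then continues the non-symmetric solution $U_\sigma+V_\sigma$ up to $\sigma=500$, verifying that $V_\sigma\to0$ as $\sigma\downarrow\sigma_0$, that $V_\sigma\neq0$ and is not $\mathcal{R}$-symmetric for $\sigma>\sigma_0$, and that $\mathcal{R}V_\sigma$ recovers the conjugate branch. That the bifurcation is a supercritical pitchfork is confirmed by checking the amplitude law $\|V_\sigma\|\sim(\sigma-\sigma_0)^{1/2}$ on the correct side of $\sigma_0$, equivalently the sign of the cubic coefficient in the Lyapunov--Schmidt reduction (the $\mathbb{Z}_2$-symmetry forcing the quadratic coefficient to vanish).

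The main obstacle is precisely the one that makes the analytic argument delicate: the domain is unbounded and $U$ decays only like $1/|y|$, so resolving the far field in the presence of the large term $y\cdot\nabla_y U$ and imposing~\eqref{eq:LerayEquationsBC} accurately is where most of the care must go, since truncation and basis errors in the tail feed back into the interior solution and hence into the computed spectrum. Secondary difficulties are that the eigenvalue computation is most ill-conditioned near $\sigma_0$, where the relevant eigenvalue is near zero, and that distinguishing a genuine pitchfork from a fold or transcritical bifurcation requires carefully exploiting the $\mathbb{Z}_2$-symmetry and checking the non-degeneracy conditions numerically.
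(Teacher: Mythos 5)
Your overall strategy (continuation in $\sigma$ starting from the Stokes solution, tracking the spectrum of the linearization, branch switching at the zero crossing, and verifying the supercritical pitchfork through the $\mathbb{Z}_2$-symmetry and amplitude scaling) matches the paper's route and the template of \cite{guillodsverak}, but your discretization is genuinely different from what the paper actually does. The paper works in primitive velocity--pressure variables and discretizes the Leray system on a large ball $B_{R_\sigma}$ (with $R_\sigma = 100\sqrt{1+\sigma/80}$ chosen to grow with $\sigma$), rescaled to the unit ball so that a single finite-element mesh can be reused; the boundary condition \eqref{eq:LerayEquationsBC} is approximated by imposing $U = u_0$ on $\partial B_{R_\sigma}$, and the discretization is by Lagrange P2/P1 finite elements (FEniCS) with a locally refined mesh near the origin. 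The continuation is a plain parameter sweep in $\sigma$ with Newton iterations, and the spectrum is computed with Krylov--Schur (SLEPc). Your proposal instead passes to a stream function, uses a Fourier--Chebyshev spectral discretization with a coordinate map of $[0,\infty)$ to a bounded interval, builds the $\mathcal{R}$-parity into the basis, and uses pseudo-arclength continuation. Both are defensible: the paper's finite-element/truncated-ball approach is robust, reuses the 3D code infrastructure, and sidesteps the polar-coordinate singularity at the origin and the delicate treatment of the far-field map, at the cost of an artificial Dirichlet boundary that is managed by taking $R_\sigma$ large and $\sigma$-dependent; your spectral/stream-function approach, if implemented carefully, would impose the far-field condition more faithfully and likely give sharper eigenvalue resolution, but it is more delicate near $r=0$ and requires nontrivial care with the compactified radial coordinate and the $y\cdot\nabla_y$ term, exactly as you flag. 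Neither the pseudo-arclength nor the parity-restricted truncation is used in the paper, but neither is needed there since the symmetric branch has no folds in the range considered and the Newton iterations stay symmetric automatically.
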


Since the methods are pretty similar to what was done in 3D
\cite{guillodsverak}, the description is relatively brief.

\subsection{Discretization}

The computational domain is the ball of radius $R$ centered at the origin $B_{R}$, and its size is chosen to depend on $\sigma$ as $R_{\sigma}=100\sqrt{1+\sigma/80}$ in order to better track the solution.
For simplicity, instead of
considering \eqref{eq:LerayEquations} in $B_{R_{\sigma}}$ with $U(y)=u_{0}(y)$
on $\partial B_{R_{\sigma}}$, we rescale the amplitude and the domain
$B_{R_{\sigma}}$ to the unit ball $B_1$, hence we consider the
following problem to solve numerically:
\begin{equation}
	\label{eq:LerayEquationsBall}
	\left\{ \begin{aligned}-\Delta\tilde{U}-\frac{R_{\sigma}^{2}}{2}(\tilde{U}+\tilde{y}\cdot\nabla_{\tilde{y}}\tilde{U})+\sigma\tilde{U}\cdot\nabla\tilde{U}+\nabla\tilde{P} & =0 &  & \text{ in }B_1\\
		\div\tilde{U} & =0 &  & \text{ in }B_1\\
		\tilde{U}(\tilde{y}) & =\tilde{y}_{1}\tilde{y} &  & \text{ on }\partial B_1
	\end{aligned}
	\right.
\end{equation}
where
\begin{equation}
	\tilde{U}(\tilde{y})=\frac{1}{\sigma R_{\sigma}}U(R_{\sigma}\tilde{y}).
\end{equation}
That way, we work on the fixed domain $B_1$ on which only one mesh can be considered. The mesh is constructed by first meshing the disk uniformly,
then performing one refinement in $B_{\frac{1}{2}}$ and a second one in $B_{\frac{1}{4}}$, leading to a total of $100013$ cells and $50218$ vertices.
On this mesh, $\tilde{U}$ and $\tilde{P}$ are respectively discretized
using Lagrange P2 and P1 finite elements using the package FEniCS \cite{Logg-FENICS2012,Alnes-FENICS2015}.

\subsection{Continuation}

A numerical continuation is performed in $\sigma$: in $[0,10]$ we
choose a step of $0.02$ and in $[10,80]$ a step of $0.2$. At each
step the solution from the previous step is used as an initial datum
for a Newton's method. This Newton's method typically converges in
two or three steps and automatically remains in the class of $\mathcal{R}$-symmetric solutions.

We numerically find a solution $\tilde{U}_{\sigma}$ which is $\mathcal{R}$-symmetric
for $\sigma\in[0,80]$, represented on Figure \ref{fig:Usym}.

\subsection{Spectrum}

For each value of $\sigma$, the first 10 smallest eigenvalues of
the linearization of~\eqref{eq:LerayEquationsBall} around $\tilde{U}_{\sigma}$ are calculated
using the Krylov--Schur algorithm \cite{Hernandez-KrylovSchur2009} implemented
in SLEPc \cite{Hernandez-SLEPc2005}.

At $\sigma=0$, the eigenvalues found numerically are given up to
a very high precision by $1+\frac{n}{2}$, $n\in\N$ with
multiplicity $n+1$, and correspond exactly to the discrete spectrum
of the Stokes operator in self-similar coordinates.

The real part of the eigenvalues closer to the real axis are represented
on Figure~\ref{fig:spectrum}. In particular, a real eigenvalue crosses the
real axis near $\sigma_{0}\approx39.2$ whereas all the other eigenvalues
have a strictly positive real part on the range $\sigma\in[0,80]$.

Interestingly, the eigenvalue $\lambda=1$ is unchanged with respect
to $\sigma$; the explanation is that one can prove that any eigenvector
with non-zero mean has $\lambda=1$.

\subsection{Bifurcation}

Standard bifurcation theory indicates that when a real eigenvalue
is crossing the imaginary axis at $\sigma_{0}$, then another solution
should bifurcate from $\tilde{U}_{\sigma}$ at $\sigma=\sigma_{0}$.
This new branch of solution can be found numerically: for a value
of $\sigma$ slightly bigger than $\sigma_{0}$, Newton's iterations
are performed with the initial guess $\tilde{U}_{\sigma}+\alpha\phi$,
where $\phi$ is the eigenvector corresponding to the crossing eigenvalue,
and $\alpha\in\R$ is some real parameter to be adjusted.
Then a continuation described above can be performed.

This leads to the numerical construction of a non-symmetric solution
$\tilde{U}_{\sigma}+\tilde{V}_{\sigma}$ for $\sigma\in[\sigma_{0},80]$
represented on Figure \ref{fig:Uasym}.

\subsubsection*{Acknowledgments}
DA was supported by NSF Grant No. 2406947 and the Office of the Vice Chancellor for Research and Graduate Education at the University of Wisconsin–Madison with funding from the Wisconsin Alumni Research Foundation.
JG was supported by the French National Research Agency (Grants ANR-18-CE40-0027, project SingFlows and ANR-25-CE40-4532-09, project smasH) and by the Initiative d'Excellence (Idex) of Sorbonne University through the Emergence program. XR was supported by the National Key R\&D Program of China (No. 2023YFA1010700). DA and JG thank Vladim{\'i}r \v{S}ver{\'a}k for many fruitful discussions on non-uniqueness. DA also thanks Zachary Bradshaw and Tobias Barker for discussions on this problem.

\begin{remark}
    We recently learned that Changfeng Gui, Hao Liu, and Chunjing Xie are able to prove a result similar to Theorem~\ref{thm:main}~\cite{ChunjingXie}.
\end{remark}

\bibliographystyle{abbrvurl}
\bibliography{nonuniquenessbib}

@article{hou2025nonuniqueness,
  title={Nonuniqueness of {L}eray-{H}opf solutions to the unforced incompressible 3{D} {N}avier-{S}tokes Equation},
  author={Hou, Thomas and Wang, Yixuan and Yang, Changhe},
  journal={arXiv preprint},
  note={\href{https://arxiv.org/abs/2509.25116}{arXiv:2509.25116}},
  year={2025}
}

@article{albritton2021non,
author = {Dallas Albritton and Elia Bru{\'e} and Maria Colombo},
title = {{Non-uniqueness of Leray solutions of the forced Navier-Stokes equations}},
volume = {196},
journal = {Annals of Mathematics},
number = {1},
publisher = {Department of Mathematics of Princeton University},
pages = {415 -- 455},
keywords = {Leray-Hopf solutions, Navier-Stokes equations, non-uniqueness},
year = {2022},
doi = {10.4007/annals.2022.196.1.3},
URL = {https://doi.org/10.4007/annals.2022.196.1.3}
}

@Book{lemarie2016,
  author    = {Lemari\'e-Rieusset, Pierre Gilles},
  publisher = {CRC Press, Boca Raton, FL},
  title     = {The {N}avier-{S}tokes problem in the 21st century},
  year      = {2016},
  isbn      = {978-1-4665-6621-7},
  doi       = {10.1201/b19556},
  pages     = {xxii+718},
}

@Article{jiasverakselfsim,
  author   = {Jia, Hao and {\v S}ver{\'a}k, Vladim{\'i}r},
  journal  = {Invent. Math.},
  title    = {Local-in-space estimates near initial time for weak solutions of the {N}avier-{S}tokes equations and forward self-similar solutions},
  year     = {2014},
  issn     = {0020-9910},
  number   = {1},
  pages    = {233--265},
  volume   = {196},
  doi      = {10.1007/s00222-013-0468-x},
  fjournal = {Inventiones Mathematicae},
}

@Article{bradshawtsairot,
  author   = {Bradshaw, Zachary and Tsai, Tai-Peng},
  journal  = {Comm. Partial Differential Equations},
  title    = {Rotationally corrected scaling invariant solutions to the {N}avier--{S}tokes equations},
  year     = {2017},
  issn     = {0360-5302},
  number   = {7},
  pages    = {1065--1087},
  volume   = {42},
  doi      = {10.1080/03605302.2017.1323922},
  fjournal = {Communications in Partial Differential Equations},
}

@Article{ckn,
  author   = {Caffarelli, L. and Kohn, R. and Nirenberg, L.},
  journal  = {Comm. Pure Appl. Math.},
  title    = {Partial regularity of suitable weak solutions of the {N}avier-{S}tokes equations},
  year     = {1982},
  issn     = {0010-3640},
  number   = {6},
  pages    = {771--831},
  volume   = {35},
  doi      = {10.1002/cpa.3160350604},
  fjournal = {Communications on Pure and Applied Mathematics},
}

@Article{guillodsverak,
  author    = {Guillod, Julien and Šverák, Vladimír},
  journal   = {J. Math. Fluid Mech.},
  title     = {Numerical Investigations of Non-uniqueness for the {N}avier–{S}tokes Initial Value Problem in Borderline Spaces},
  year      = {2023},
  number    = {3},
  volume    = {25},
  doi       = {10.1007/s00021-023-00789-5},
  fjournal  = {Journal of Mathematical Fluid Mechanics},
}

@Book{MazyaBook,
  author    = {Maz'ya, Vladimir},
  publisher = {Springer, Heidelberg},
  title     = {Sobolev spaces with applications to elliptic partial differential equations},
  year      = {2011},
  edition   = {augmented},
  isbn      = {978-3-642-15563-5},
  series    = {Grundlehren der mathematischen Wissenschaften [Fundamental Principles of Mathematical Sciences]},
  volume    = {342},
  doi       = {10.1007/978-3-642-15564-2},
  pages     = {xxviii+866},
}

@Article{Alnes-FENICS2015,
  author  = {Martin S. Aln{\ae}s and Jan Blechta and Johan Hake and August Johansson and Benjamin Kehlet and Anders Logg and Chris Richardson and Johannes Ring and Marie E. Rognes and Garth N. Wells},
  journal = {Archive of Numerical Software},
  title   = {The {FEniCS} Project Version 1.5},
  year    = {2015},
  number  = {100},
  pages   = {9--23},
  volume  = {3},
  doi     = {10.11588/ans.2015.100.20553},
  page    = {9--23},
}

@Article{bradshawtsaibesov,
  author   = {Bradshaw, Zachary and Tsai, Tai-Peng},
  journal  = {Arch. Ration. Mech. Anal.},
  title    = {Discretely self-similar solutions to the {N}avier-{S}tokes equations with {B}esov space data},
  year     = {2018},
  issn     = {0003-9527},
  number   = {1},
  pages    = {53--77},
  volume   = {229},
  doi      = {10.1007/s00205-017-1213-1},
  fjournal = {Archive for Rational Mechanics and Analysis},
}

@InCollection{JiaSverakTsaiHandbook,
  author    = {Jia, Hao and \v{S}ver\'{a}k, Vladimir and Tsai, Tai-Peng},
  booktitle = {Handbook of mathematical analysis in mechanics of viscous fluids},
  publisher = {Springer, Cham},
  title     = {Self-similar solutions to the nonstationary {N}avier-{S}tokes equations},
  year      = {2018},
  pages     = {461--507},
  doi       = {10.1007/978-3-319-13344-7_9},
}

@Article{tsaidiscretely,
  author   = {Tsai, Tai-Peng},
  journal  = {Comm. Math. Phys.},
  title    = {Forward discretely self-similar solutions of the {N}avier-{S}tokes equations},
  year     = {2014},
  issn     = {0010-3616},
  number   = {1},
  pages    = {29--44},
  volume   = {328},
  doi      = {10.1007/s00220-014-1984-2},
  fjournal = {Communications in Mathematical Physics},
}

@Book{GrafakosBook,
  author    = {Grafakos, Loukas},
  publisher = {Springer, New York},
  title     = {Classical {F}ourier analysis},
  year      = {2014},
  edition   = {Third},
  isbn      = {978-1-4939-1193-6; 978-1-4939-1194-3},
  series    = {Graduate Texts in Mathematics},
  volume    = {249},
  doi       = {10.1007/978-1-4939-1194-3},
  pages     = {xviii+638},
}

@Article{GallayWayne,
  author   = {Gallay, Thierry and Wayne, C. Eugene},
  journal  = {Comm. Math. Phys.},
  title    = {Global stability of vortex solutions of the two-dimensional {N}avier-{S}tokes equation},
  year     = {2005},
  issn     = {0010-3616},
  number   = {1},
  pages    = {97--129},
  volume   = {255},
  doi      = {10.1007/s00220-004-1254-9},
  fjournal = {Communications in Mathematical Physics},
}

@Book{SteinBook,
  author    = {Stein, Elias M.},
  publisher = {Princeton University Press, Princeton, NJ},
  title     = {Harmonic analysis: real-variable methods, orthogonality, and oscillatory integrals},
  year      = {1993},
  isbn      = {0-691-03216-5},
  note      = {With the assistance of Timothy S. Murphy, Monographs in Harmonic Analysis, III},
  series    = {Princeton Mathematical Series},
  volume    = {43},
  pages     = {xiv+695},
}

@Article{fujitakato,
  author   = {Fujita, Hiroshi and Kato, Tosio},
  journal  = {Arch. Rational Mech. Anal.},
  title    = {On the {N}avier-{S}tokes initial value problem. {I}},
  year     = {1964},
  issn     = {0003-9527},
  pages    = {269--315},
  volume   = {16},
  doi      = {10.1007/BF00276188},
  fjournal = {Archive for Rational Mechanics and Analysis},
}

@Article{kato,
  author   = {Kato, Tosio},
  journal  = {Math. Z.},
  title    = {Strong {$L^{p}$}-solutions of the {N}avier-{S}tokes equation in {${\bf R}^{m}$}, with applications to weak solutions},
  year     = {1984},
  issn     = {0025-5874},
  number   = {4},
  pages    = {471--480},
  volume   = {187},
  doi      = {10.1007/BF01174182},
  fjournal = {Mathematische Zeitschrift},
}

@Article{Hernandez-SLEPc2005,
  author    = {Hernandez, Vicente and Roman, Jose E. and Vidal, Vicente},
  journal   = {ACM Transactions on Mathematical Software},
  title     = {{SLEPc}},
  year      = {2005},
  issn      = {0098-3500},
  number    = {3},
  pages     = {351--362},
  volume    = {31},
  doi       = {10.1145/1089014.1089019},
  publisher = {Association for Computing Machinery (ACM)},
}

@Article{globalweakbesov,
  author   = {Albritton, Dallas and Barker, Tobias},
  journal  = {Arch. Ration. Mech. Anal.},
  title    = {Global weak {B}esov solutions of the {N}avier-{S}tokes equations and applications},
  year     = {2019},
  issn     = {0003-9527},
  number   = {1},
  pages    = {197--263},
  volume   = {232},
  doi      = {10.1007/s00205-018-1319-0},
  fjournal = {Archive for Rational Mechanics and Analysis},
}

@Book{KorobBook,
  author    = {Korobkov, Mikhail and Pileckas, Konstantin and Russo, Remigio},
  publisher = {Birkhäuser Cham},
  title     = {The Steady Navier-Stokes System. Basics of the Theory and the Leray Problem.},
  year      = {2025},
  doi       = {10.1007/978-3-031-50898-1},
}

@Article{GKR23,
  author   = {Guillod, Julien and Korobkov, Mikhail and Ren, Xiao},
  journal  = {Commun. Math. Phys.},
  title    = {Existence and Uniqueness for Plane Stationary Navier–Stokes Flows with Compactly Supported Force},
  year     = {2023},
  issn     = {1432-0916},
  number   = {2},
  pages    = {729--762},
  volume   = {397},
  doi      = {10.1007/s00220-022-04504-9},
  fjournal = {Communications in Mathematical Physics},
}

@Article{korob-ren23,
  author   = {Korobkov, Mikhail and Ren, Xiao},
  journal  = {J. Math. Fluid Mech.},
  title    = {Stationary Solutions to the Navier–Stokes System in an Exterior Plane Domain: 90 Years of Search, Mysteries and Insights},
  year     = {2023},
  issn     = {1422-6928},
  number   = {3},
  pages    = {55},
  volume   = {25},
  doi      = {10.1007/s00021-023-00792-w},
  fjournal = {Journal of Mathematical Fluid Mechanics},
}

@Article{korob-ren25,
  author   = {Korobkov, Mikhail and Ren, Xiao},
  journal  = {J. Math. Fluid Mech.},
  title    = {On Basic Velocity Estimates for the Plane Steady-State Navier–Stokes System and Its Applications},
  year     = {2025},
  issn     = {1422-6928},
  number   = {3},
  pages    = {37},
  volume   = {27},
  doi      = {10.1007/s00021-025-00939-x},
  fjournal = {Journal of Mathematical Fluid Mechanics},
}

@Book{Logg-FENICS2012,
  author    = {Anders Logg and Kent-Andre Mardal and Garth N. Wells and others},
  publisher = {Springer},
  title     = {Automated Solution of Differential Equations by the Finite Element Method},
  year      = {2012},
  isbn      = {978-3-642-23098-1},
  doi       = {10.1007/978-3-642-23099-8},
}

@Article{korobkovtsai,
  author   = {Korobkov, Mikhail and Tsai, Tai-Peng},
  journal  = {Anal. PDE},
  title    = {Forward self-similar solutions of the {N}avier-{S}tokes equations in the half space},
  year     = {2016},
  issn     = {2157-5045},
  number   = {8},
  pages    = {1811--1827},
  volume   = {9},
  doi      = {10.2140/apde.2016.9.1811},
  fjournal = {Analysis \& PDE},
}

@Article{jiasverakillposed,
  author   = {Jia, Hao and {\v S}ver{\'a}k, Vladim{\'i}r},
  journal  = {J. Funct. Anal.},
  title    = {Are the incompressible 3d {N}avier-{S}tokes equations locally ill-posed in the natural energy space?},
  year     = {2015},
  issn     = {0022-1236},
  number   = {12},
  pages    = {3734--3766},
  volume   = {268},
  doi      = {10.1016/j.jfa.2015.04.006},
  fjournal = {Journal of Functional Analysis},
}

@article {leray1934,
    AUTHOR = {Leray, Jean},
     TITLE = {Sur le mouvement d'un liquide visqueux emplissant l'espace},
   JOURNAL = {Acta Math.},
  FJOURNAL = {Acta Mathematica},
    VOLUME = {63},
      YEAR = {1934},
    NUMBER = {1},
     PAGES = {193--248},
       DOI = {10.1007/BF02547354},
}

@article{CheskidovLuoSharp2022,
  title = {Sharp nonuniqueness for the {N}avier–{S}tokes equations},
  volume = {229},
  DOI = {10.1007/s00222-022-01116-x},
  number = {3},
  journal = {Inventiones mathematicae},
  author = {Cheskidov,  Alexey and Luo,  Xiaoyutao},
  year = {2022},
  pages = {987–1054}
}

@article{CheskidovLuoL2crit2023,
  title = {{$L^2$}-Critical Nonuniqueness for the 2{D} {N}avier-{S}tokes Equations},
  volume = {9},
  DOI = {10.1007/s40818-023-00154-9},
  number = {2},
  journal = {Annals of PDE},
  publisher = {Springer Science and Business Media LLC},
  author = {Cheskidov,  Alexey and Luo,  Xiaoyutao},
  year = {2023},
}

@article {ladyzhenskayanonuniqueness,
    AUTHOR = {Lady{\v z}enskaja, O. A.},
     TITLE = {An example of nonuniqueness in {H}opf's class of weak
              solutions of the {N}avier-{S}tokes equations},
   JOURNAL = {Izv. Akad. Nauk SSSR Ser. Mat.},
  FJOURNAL = {Izvestiya Akademii Nauk SSSR. Seriya Matematicheskaya},
    VOLUME = {33},
      YEAR = {1969},
     PAGES = {240--247},
      ISSN = {0373-2436},
   MRCLASS = {35.79},
  MRNUMBER = {0240475},
MRREVIEWER = {J. N. Newman},
}

@TechReport{Hernandez-KrylovSchur2009,
  author      = {V. Hernandez and J. E. Roman and A. Tomas and V. Vidal},
  institution = {Universitat Polit\`ecnica de Val\`encia},
  title       = {{Krylov--Schur} Methods in {SLEPc}},
  year        = {2009},
  note        = {Available at \href{http://slepc.upv.es}{http://slepc.upv.es}},
  number      = {STR-7},
}

@Article{Jiasver2013,
  author   = {Jia, Hao and {\v S}ver{\'a}k, Vladim{\'i}r},
  journal  = {SIAM J. Math. Anal.},
  title    = {Minimal {$L^3$}-initial data for potential {N}avier-{S}tokes singularities},
  year     = {2013},
  issn     = {0036-1410},
  number   = {3},
  pages    = {1448--1459},
  volume   = {45},
  doi      = {10.1137/120880197},
  fjournal = {SIAM Journal on Mathematical Analysis},
}

@Article{bradshawtsaiII,
  author   = {Bradshaw, Zachary and Tsai, Tai-Peng},
  journal  = {Ann. Henri Poincar\'e},
  title    = {Forward discretely self-similar solutions of the {N}avier-{S}tokes equations {II}},
  year     = {2017},
  issn     = {1424-0637},
  number   = {3},
  pages    = {1095--1119},
  volume   = {18},
  doi      = {10.1007/s00023-016-0519-0},
  fjournal = {Annales Henri Poincar\'e. A Journal of Theoretical and Mathematical Physics},
}

@Book{GaldiBook,
  author    = {Galdi, G. P.},
  publisher = {Springer, New York},
  title     = {An introduction to the mathematical theory of the {N}avier-{S}tokes equations},
  year      = {2011},
  edition   = {Second},
  isbn      = {978-0-387-09619-3},
  note      = {Steady-state problems},
  series    = {Springer Monographs in Mathematics},
  doi       = {10.1007/978-0-387-09620-9},
  pages     = {xiv+1018},
}

@misc {ChunjingXie,
    AUTHOR = {Chunjing Xie},
    TITLE = {Personal communication},
    YEAR = {January 2026},
}

@book {BFbook2002,
    AUTHOR = {Bensoussan, Alain and Frehse, Jens},
     TITLE = {Regularity results for nonlinear elliptic systems and
              applications},
    SERIES = {Applied Mathematical Sciences},
    VOLUME = {151},
 PUBLISHER = {Springer-Verlag, Berlin},
      YEAR = {2002},
     PAGES = {xii+441},
      ISBN = {3-540-67756-9},
   MRCLASS = {35-02 (35B65 35J45 35J60 60J65 82D37 93E20)},
  MRNUMBER = {1917320},
MRREVIEWER = {Giuseppe Mingione},
       DOI = {10.1007/978-3-662-12905-0},
       URL = {https://doi.org/10.1007/978-3-662-12905-0},
}

@article {BrandoleseFineProperties,
    AUTHOR = {Brandolese, Lorenzo},
     TITLE = {Fine properties of self-similar solutions of the
              {N}avier-{S}tokes equations},
   JOURNAL = {Arch. Ration. Mech. Anal.},
  FJOURNAL = {Archive for Rational Mechanics and Analysis},
    VOLUME = {192},
      YEAR = {2009},
    NUMBER = {3},
     PAGES = {375--401},
       DOI = {10.1007/s00205-008-0149-x},
}

@incollection {CannoneMeyerPlanchonAutosimilaires,
    AUTHOR = {Cannone, M. and Meyer, Y. and Planchon, F.},
     TITLE = {Solutions auto-similaires des \'equations de
              {N}avier-{S}tokes},
 BOOKTITLE = {S\'eminaire sur les \'Equations aux {D}\'eriv\'ees
              {P}artielles, 1993--1994},
     PAGES = {Exp. No. VIII, 12},
 PUBLISHER = {\'Ecole Polytech., Palaiseau},
      YEAR = {1994},
      ISBN = {2-7302-0301-X},
   MRCLASS = {35Q30 (76D05)},
  MRNUMBER = {1300903},
MRREVIEWER = {Jos\'e\ Luiz\ Boldrini},
}

@article {KochTataruWellposednessNavierStokes,
    AUTHOR = {Koch, Herbert and Tataru, Daniel},
     TITLE = {Well-posedness for the {N}avier-{S}tokes equations},
   JOURNAL = {Adv. Math.},
  FJOURNAL = {Advances in Mathematics},
    VOLUME = {157},
      YEAR = {2001},
    NUMBER = {1},
     PAGES = {22--35},
       DOI = {10.1006/aima.2000.1937},
}

@article {AlbrittonBradshawSQG,
    AUTHOR = {Albritton, Dallas and Bradshaw, Zachary},
     TITLE = {Non-decaying solutions to the critical surface
              quasi-geostrophic equations with symmetries},
   JOURNAL = {Trans. Amer. Math. Soc.},
  FJOURNAL = {Transactions of the American Mathematical Society},
    VOLUME = {375},
      YEAR = {2022},
    NUMBER = {1},
     PAGES = {587--625},
       DOI = {10.1090/tran/8500},
}

@article {ChaeWolfDiscretelySelfSimilar,
    AUTHOR = {Chae, Dongho and Wolf, J\"org},
     TITLE = {Existence of discretely self-similar solutions to the
              {N}avier-{S}tokes equations for initial value in
              {$L_{loc}^2(\mathbb{R}^3)$}},
   JOURNAL = {Ann. Inst. H. Poincar\'e{} C Anal. Non Lin\'eaire},
  FJOURNAL = {Annales de l'Institut Henri Poincar\'e{} C. Analyse Non
              Lin\'eaire},
    VOLUME = {35},
      YEAR = {2018},
    NUMBER = {4},
     PAGES = {1019--1039},
       DOI = {10.1016/j.anihpc.2017.10.001},
}

@article {BradshawTsaiL2Loc,
    AUTHOR = {Bradshaw, Zachary and Tsai, Tai-Peng},
     TITLE = {Discretely self-similar solutions to the {N}avier-{S}tokes
              equations with data in {$L_{\rm loc}^2$} satisfying the local
              energy inequality},
   JOURNAL = {Anal. PDE},
  FJOURNAL = {Analysis \& PDE},
    VOLUME = {12},
      YEAR = {2019},
    NUMBER = {8},
     PAGES = {1943--1962},
       DOI = {10.2140/apde.2019.12.1943},
}

@article {GigaMiyakawaMeasuresInitialVorticity,
    AUTHOR = {Giga, Yoshikazu and Miyakawa, Tetsuro},
     TITLE = {Navier-{S}tokes flow in {$\mathbf{R}^3$} with measures as initial
              vorticity and {M}orrey spaces},
   JOURNAL = {Comm. Partial Differential Equations},
  FJOURNAL = {Communications in Partial Differential Equations},
    VOLUME = {14},
      YEAR = {1989},
    NUMBER = {5},
     PAGES = {577--618},
       DOI = {10.1080/03605308908820621},
}

@article {CannonePlanchonSelfsimilarSolutionsCPDE,
    AUTHOR = {Cannone, M. and Planchon, F.},
     TITLE = {Self-similar solutions for {N}avier-{S}tokes equations in
              {${\bf R}^3$}},
   JOURNAL = {Comm. Partial Differential Equations},
  FJOURNAL = {Communications in Partial Differential Equations},
    VOLUME = {21},
      YEAR = {1996},
    NUMBER = {1-2},
     PAGES = {179--193},
       DOI = {10.1080/03605309608821179},
}

@article {BarrazaSelfsimilarWeakLp,
    AUTHOR = {Barraza, Oscar A.},
     TITLE = {Self-similar solutions in weak {$L^p$}-spaces of the
              {N}avier-{S}tokes equations},
   JOURNAL = {Rev. Mat. Iberoamericana},
  FJOURNAL = {Revista Matem\'atica Iberoamericana},
    VOLUME = {12},
      YEAR = {1996},
    NUMBER = {2},
     PAGES = {411--439},
       DOI = {10.4171/RMI/202},
}

@incollection {BradshawTsaiSurveyRecentResults,
    AUTHOR = {Bradshaw, Zachary and Tsai, Tai-Peng},
     TITLE = {Self-similar solutions to the {N}avier-{S}tokes equations: a
              survey of recent results},
 BOOKTITLE = {Nonlinear analysis in geometry and applied mathematics. {P}art
              2},
    SERIES = {Harv. Univ. Cent. Math. Sci. Appl. Ser. Math.},
    VOLUME = {2},
     PAGES = {159--181},
 PUBLISHER = {Int. Press, Somerville, MA},
      YEAR = {2018},
      ISBN = {978-1-57146-359-3},
   MRCLASS = {35Q30 (35C06 76D05)},
  MRNUMBER = {3823886},
}

@article {CalderonWeakSolutionsLp,
    AUTHOR = {Calder\'on, Calixto P.},
     TITLE = {Existence of weak solutions for the {N}avier-{S}tokes
              equations with initial data in {$L^p$}},
   JOURNAL = {Trans. Amer. Math. Soc.},
  FJOURNAL = {Transactions of the American Mathematical Society},
    VOLUME = {318},
      YEAR = {1990},
    NUMBER = {1},
     PAGES = {179--200},
       DOI = {10.2307/2001234},
}

@article {BarkerSereginSverakWeakL3,
    AUTHOR = {Barker, T. and Seregin, G. and \v{S}ver\'ak, V.},
     TITLE = {On stability of weak {N}avier-{S}tokes solutions with large
              {$L^{3,\infty}$} initial data},
   JOURNAL = {Comm. Partial Differential Equations},
  FJOURNAL = {Communications in Partial Differential Equations},
    VOLUME = {43},
      YEAR = {2018},
    NUMBER = {4},
     PAGES = {628--651},
       DOI = {10.1080/03605302.2018.1449219},
}

@article {BuckmasterVicolNavierStokesAnnals,
    AUTHOR = {Buckmaster, Tristan and Vicol, Vlad},
     TITLE = {Nonuniqueness of weak solutions to the {N}avier-{S}tokes
              equation},
   JOURNAL = {Ann. of Math. (2)},
  FJOURNAL = {Annals of Mathematics. Second Series},
    VOLUME = {189},
      YEAR = {2019},
    NUMBER = {1},
     PAGES = {101--144},
       DOI = {10.4007/annals.2019.189.1.3},
}

@article{coiculescu2025non,
  title={Non-Uniqueness of Smooth Solutions of the {N}avier-{S}tokes Equations from Critical Data},
  author={Coiculescu, Matei P and Palasek, Stan},
  journal={Invent. math.},
  note={},
  year={2025},
  DOI = {https://doi.org/10.1007/s00222-025-01396-z},
}

@article {BedrossianGermainHarropGriffithsFilaments,
    AUTHOR = {Bedrossian, Jacob and Germain, Pierre and Harrop-Griffiths,
              Benjamin},
     TITLE = {Vortex filament solutions of the {N}avier-{S}tokes equations},
   JOURNAL = {Comm. Pure Appl. Math.},
  FJOURNAL = {Communications on Pure and Applied Mathematics},
    VOLUME = {76},
      YEAR = {2023},
    NUMBER = {4},
     PAGES = {685--787},
       DOI = {10.1002/cpa.22091},
}

@book {TsaiBook,
    AUTHOR = {Tsai, Tai-Peng},
     TITLE = {Lectures on {N}avier-{S}tokes equations},
    SERIES = {Graduate Studies in Mathematics},
    VOLUME = {192},
 PUBLISHER = {American Mathematical Society, Providence, RI},
      YEAR = {2018},
     PAGES = {xii+224},
      ISBN = {978-1-4704-3096-2},
       DOI = {10.1090/gsm/192},
}

@article {GallagherGallay,
    AUTHOR = {Gallagher, Isabelle and Gallay, Thierry},
     TITLE = {Uniqueness for the two-dimensional {N}avier-{S}tokes equation
              with a measure as initial vorticity},
   JOURNAL = {Math. Ann.},
  FJOURNAL = {Mathematische Annalen},
    VOLUME = {332},
      YEAR = {2005},
    NUMBER = {2},
     PAGES = {287--327},
       DOI = {10.1007/s00208-004-0627-x},
}

@article {GallayGallagherLions,
    AUTHOR = {Gallagher, Isabelle and Gallay, Thierry and Lions,
              Pierre-Louis},
     TITLE = {On the uniqueness of the solution of the two-dimensional
              {N}avier-{S}tokes equation with a {D}irac mass as initial
              vorticity},
   JOURNAL = {Math. Nachr.},
  FJOURNAL = {Mathematische Nachrichten},
    VOLUME = {278},
      YEAR = {2005},
    NUMBER = {14},
     PAGES = {1665--1672},
       DOI = {10.1002/mana.200410331},
}

@article {NecasRuzickaSverak,
    AUTHOR = {Ne\v{c}as, J. and R\r{u}\v{z}i\v{c}ka, M. and \v{S}ver\'ak, V.},
     TITLE = {On {L}eray's self-similar solutions of the {N}avier-{S}tokes
              equations},
   JOURNAL = {Acta Math.},
  FJOURNAL = {Acta Mathematica},
    VOLUME = {176},
      YEAR = {1996},
    NUMBER = {2},
     PAGES = {283--294},
       DOI = {10.1007/BF02551584},
}

@article {TsaiBackwardSelfSim,
    AUTHOR = {Tsai, Tai-Peng},
     TITLE = {On {L}eray's self-similar solutions of the {N}avier-{S}tokes
              equations satisfying local energy estimates},
   JOURNAL = {Arch. Rational Mech. Anal.},
  FJOURNAL = {Archive for Rational Mechanics and Analysis},
    VOLUME = {143},
      YEAR = {1998},
    NUMBER = {1},
     PAGES = {29--51},
       DOI = {10.1007/s002050050099},
}

@article {LemarieRieussetFirstL2uloc,
    AUTHOR = {Lemari\'e-Rieusset, Pierre Gilles},
     TITLE = {Solutions faibles d'\'energie infinie pour les \'equations de
              {N}avier-{S}tokes dans {$\mathbb{R}^3$}},
   JOURNAL = {C. R. Acad. Sci. Paris S\'er. I Math.},
  FJOURNAL = {Comptes Rendus de l'Acad\'emie des Sciences. S\'erie I.
              Math\'ematique},
    VOLUME = {328},
      YEAR = {1999},
    NUMBER = {12},
     PAGES = {1133--1138},
       DOI = {10.1016/S0764-4442(99)80427-3},
}

@incollection {SereginKikuchi,
    AUTHOR = {Kikuchi, N. and Seregin, G.},
     TITLE = {Weak solutions to the {C}auchy problem for the
              {N}avier-{S}tokes equations satisfying the local energy
              inequality},
 BOOKTITLE = {Nonlinear equations and spectral theory},
    SERIES = {Amer. Math. Soc. Transl. Ser. 2},
    VOLUME = {220},
     PAGES = {141--164},
 PUBLISHER = {Amer. Math. Soc., Providence, RI},
      YEAR = {2007},
       DOI = {10.1090/trans2/220/07},
}

@book {LemarieRieussetRecentDevelopments,
    AUTHOR = {Lemari\'e-Rieusset, P. G.},
     TITLE = {Recent developments in the {N}avier-{S}tokes problem},
    SERIES = {Chapman \& Hall/CRC Research Notes in Mathematics},
    VOLUME = {431},
 PUBLISHER = {Chapman \& Hall/CRC, Boca Raton, FL},
      YEAR = {2002},
     PAGES = {xiv+395},
      ISBN = {1-58488-220-4},
       DOI = {10.1201/9781420035674},
}

@article {KwonTsaiL2uloc,
    AUTHOR = {Kwon, Hyunju and Tsai, Tai-Peng},
     TITLE = {Global {N}avier-{S}tokes flows for non-decaying initial data
              with slowly decaying oscillation},
   JOURNAL = {Comm. Math. Phys.},
  FJOURNAL = {Communications in Mathematical Physics},
    VOLUME = {375},
      YEAR = {2020},
    NUMBER = {3},
     PAGES = {1665--1715},
       DOI = {10.1007/s00220-020-03695-3},
}

@article {AlbrittonBarkerPrangeLocalizedSmoothingHalfSpace,
    AUTHOR = {Albritton, Dallas and Barker, Tobias and Prange, Christophe},
     TITLE = {Localized smoothing and concentration for the
              {N}avier-{S}tokes equations in the half space},
   JOURNAL = {J. Funct. Anal.},
  FJOURNAL = {Journal of Functional Analysis},
    VOLUME = {284},
      YEAR = {2023},
    NUMBER = {1},
     PAGES = {109729},
       DOI = {10.1016/j.jfa.2022.109729},
}

@article {KangMiuraTsaiLocallyL3,
    AUTHOR = {Kang, Kyungkeun and Miura, Hideyuki and Tsai, Tai-Peng},
     TITLE = {Short time regularity of {N}avier-{S}tokes flows with locally
              {$L^3$} initial data and applications},
   JOURNAL = {Int. Math. Res. Not. IMRN},
  FJOURNAL = {International Mathematics Research Notices. IMRN},
      YEAR = {2021},
    VOLUME = {2021},
    NUMBER = {11},
     PAGES = {8763--8805},
       DOI = {10.1093/imrn/rnz327},
}

@article {BarkerPrangeLocalizedSmoothingConcentration,
    AUTHOR = {Barker, Tobias and Prange, Christophe},
     TITLE = {Localized smoothing for the {N}avier-{S}tokes equations and
              concentration of critical norms near singularities},
   JOURNAL = {Arch. Ration. Mech. Anal.},
  FJOURNAL = {Archive for Rational Mechanics and Analysis},
    VOLUME = {236},
      YEAR = {2020},
    NUMBER = {3},
     PAGES = {1487--1541},
       DOI = {10.1007/s00205-020-01495-6},
}

@article {LaiMHDSelfSim,
    AUTHOR = {Lai, Chen-Chih},
     TITLE = {Forward discretely self-similar solutions of the {MHD}
              equations and the viscoelastic {N}avier-{S}tokes equations
              with damping},
   JOURNAL = {J. Math. Fluid Mech.},
  FJOURNAL = {Journal of Mathematical Fluid Mechanics},
    VOLUME = {21},
      YEAR = {2019},
    NUMBER = {3},
     PAGES = {38},
       DOI = {10.1007/s00021-019-0443-4},
}

@article {LaiMiaoZhengSelfsimFractional,
    AUTHOR = {Lai, Baishun and Miao, Changxing and Zheng, Xiaoxin},
     TITLE = {Forward self-similar solutions of the fractional
              {N}avier-{S}tokes equations},
   JOURNAL = {Adv. Math.},
  FJOURNAL = {Advances in Mathematics},
    VOLUME = {352},
      YEAR = {2019},
     PAGES = {981--1043},
       DOI = {10.1016/j.aim.2019.06.021},
}

@article{JaySelfsimHighDim,
  author = {Bang, Jeaheang and Gui, Changfeng and Liu, Hao and Wang, Yun and Xie, Chunjing},
  journal = {J. Eur. Math. Soc.},
  number = {},
  title = {Rigidity of steady solutions to the Navier–Stokes equations in high dimensions and its applications},
  volume = {published online first},
  year = {2025},
  doi = {10.4171/JEMS/1738}
}

@article {Yudovich1966,
    AUTHOR = {Judovich, V. I.},
     TITLE = {Secondary flows and fluid instability between rotating
              cylinders},
   JOURNAL = {Prikl. Mat. Meh.},
  FJOURNAL = {Akademija Nauk SSSR. Otdelenie Tehni\v{c}eskih Nauk. Institut
              Mehaniki. Prikladnaja Matematika i Mehanika},
    VOLUME = {30},
    NUMBER = {},
    YEAR = {1966},
     PAGES = {688--698},
      ISSN = {0032-8235},
   MRCLASS = {76.35},
  MRNUMBER = {221816},
MRREVIEWER = {R.\ Sani},
       DOI = {10.1016/0021-8928(66)90033-5},
       URL = {https://doi.org/10.1016/0021-8928(66)90033-5},
}

@article {CoifmanLionsMeyerSemmes,
    AUTHOR = {Coifman, R. and Lions, P.-L. and Meyer, Y. and Semmes, S.},
     TITLE = {Compensated compactness and {H}ardy spaces},
   JOURNAL = {J. Math. Pures Appl. (9)},
  FJOURNAL = {Journal de Math\'{e}matiques Pures et Appliqu\'{e}es.
              Neuvi\`eme S\'{e}rie},
    VOLUME = {72},
      YEAR = {1993},
    NUMBER = {3},
     PAGES = {247--286},
      ISSN = {0021-7824},
   MRCLASS = {46E99 (35S05 42B30 46F10 46N20 47G30)},
  MRNUMBER = {1225511},
}

\begin{figure}[p]
	\includegraphics{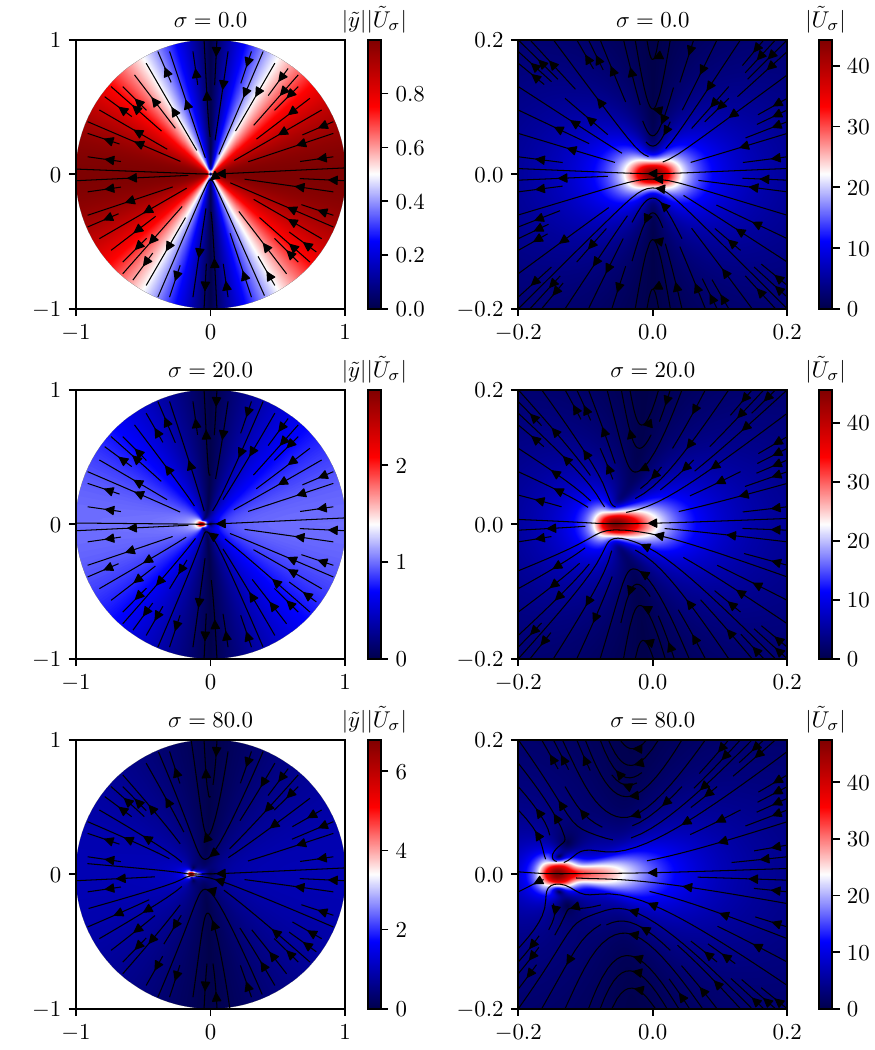}
	\caption{
		\label{fig:Usym}
		Representations of the solution $\tilde{U}_\sigma$ of \eqref{eq:LerayEquationsBall} for $\sigma\in\{0,20,80\}$: all solutions are $\mathcal{R}$-symmetric.
		The first column is on the full computational domain and represents in color the magnitude $|\tilde{y}||\tilde{U}(\tilde{y})|$
		and the streamlines of $\tilde{U}$. In particular, for $\sigma=0$, the solution is in addition symmetric with respect to the vertical axis because it solves the self-similar Stokes equations.
		The second column represent the solution near the origin with the color being the magnitude of $|\tilde{U}(\tilde{y})|$.
	}
\end{figure}

\begin{figure}[p]
	\includegraphics{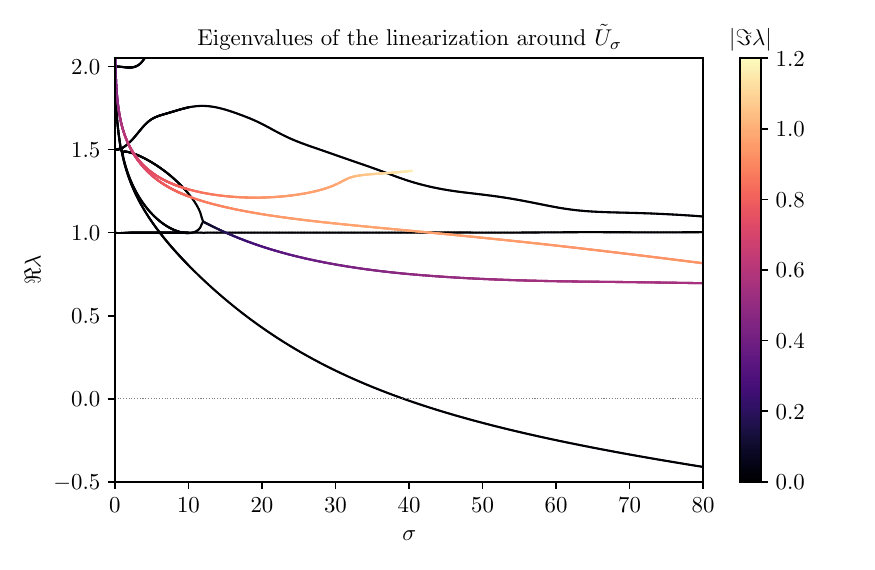}
	\includegraphics{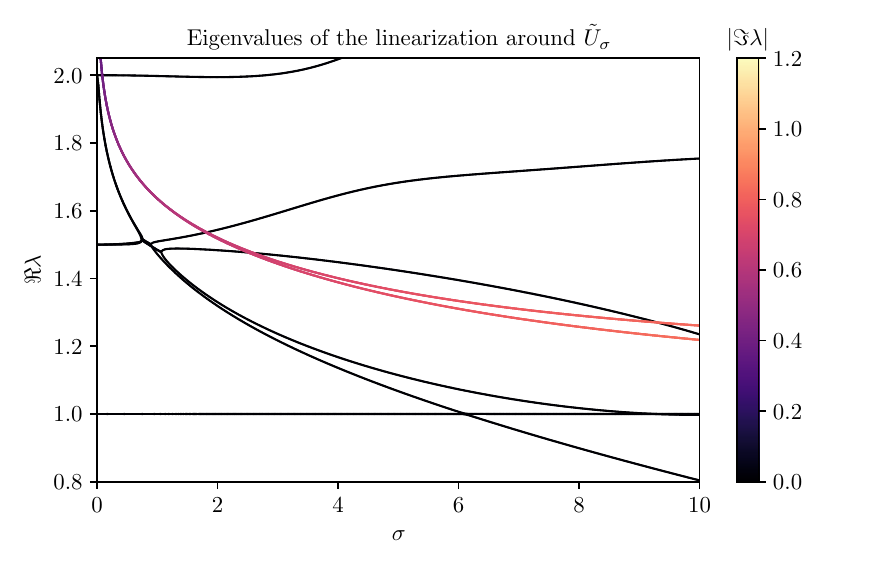}
	\caption{
		\label{fig:spectrum}
		Eigenvalues of the linearized operator around $\tilde{U}_\sigma$ found numerically.
		The color of the lines represents the absolute value of the imaginary part of the eigenvalues.
		The eigenvalue $\lambda=1$ is associated to the projection given by the mean of the eigenvectors.
		Near $\sigma_0\approx39.2$, one single real eigenvalue is crossing the imaginary axis.
	}
\end{figure}

\begin{figure}[p]
	\includegraphics{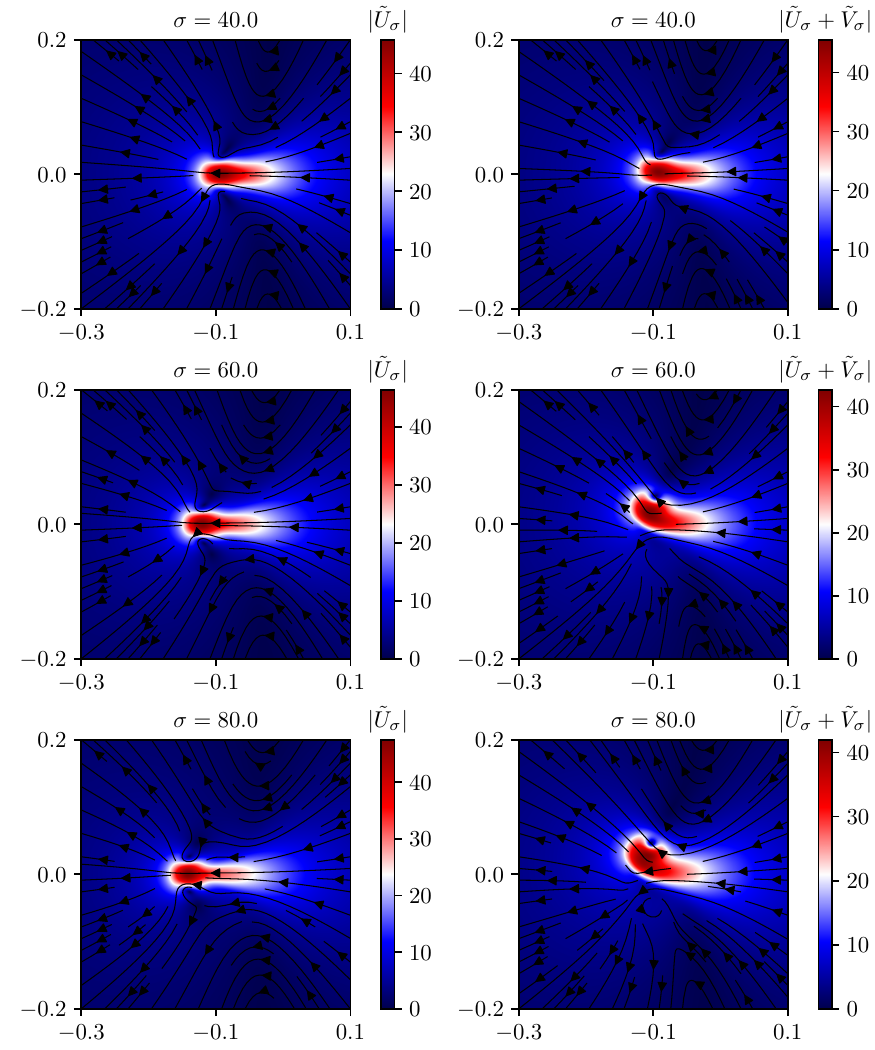}
	\caption{
		\label{fig:Uasym}
		Representation of the symmetric solution $\tilde{U}_\sigma$ on the left and of the non-symmetric solution $\tilde{U}_\sigma + \tilde{V}_\sigma$ on the right for $\sigma\in\{40,60,80\}$.
		The representation is done near the origin but slightly shifted to the left since the interesting part is in that direction.
		One visually sees that the two solutions are different.
	}
\end{figure}

\end{document}